\author{Dominique MALICET}
\date{}
\title{Random walks on $\mathrm{Homeo}(S^1)$}
\newtheorem{thm}{Theorem}
\newtheorem{lem}{Lemma}[section]
\newtheorem{cor}[lem]{Corollary}
\newtheorem{Def}[lem]{Definition}
\newtheorem{prop}[lem]{Proposition}
\newtheorem{rem}[lem]{Remark}
\newtheorem*{thm*}{Theorem}
\newtheorem{ass}{Assumption}
\newcommand{\Z}{\mathbb{Z}}
\newcommand{\R}{\mathbb{R}}
\newcommand{\N}{\mathbb{N}}
\newcommand{\E}{\mathbb{E}}
\newcommand{\ep}{\varepsilon}
\newenvironment{disarray}{\everymath{\displaystyle\everymath{}}\array}{\endarray}
\begin{document}
\maketitle
\begin{abstract}
In this paper, we study random walks $g_n=f_{n-1}\cdots f_0$ on the group $\mathrm{Homeo}(S^1)$ of the homeomorphisms of the circle, where the homeomorphisms $f_k$ are chosen randomly, independently, with respect to a same probability measure $\nu$. We prove that under the only condition that there is no probability measure invariant by $\nu$-almost every homeomorphism, the random walk almost surely contracts small intervals. It generalizes what has been known on this subject until now, since various conditions on $\nu$ were imposed in order to get the phenomenon of contractions. Moreover, we obtain the surprising fact that the rate of contraction is exponential, even in the lack of assumptions of smoothness  on the $f_k$'s. We deduce various dynamical consequences on the random walk $(g_n)$: finiteness of ergodic stationary measures, distribution of the trajectories, asymptotic law of the evaluations, etc. The proof of the main result is based on a modification of the Ávila-Viana's invariance principle, working for continuous cocycles on a space fibred in circles.
\end{abstract}
\section{Introduction}
The objective of the paper is to study properties of  \textit{(left) random walks} on $\mathrm{Homeo}(S^1)$, that is to say long compositions $f_n\circ\cdots \circ f_0$ of homeomorphisms of the circle chosen randomly independently with respect to a same probability measure $\nu$. The study of independent random composition of transformations of a space $X$ is the theory of \textit{random dynamical systems} (RDS). They appear naturally for example in the theory of \textit{iterated forward systems} (IFS), when one wants to study the action of a finitely generated group or semigroup $G$: choosing $\nu$ uniform on a set of generators, the theory of RDS allows to study the properties of ``typical'' elements of $G$. The RDS also correspond to a natural family of skew-products on $X$: the ones of the form $(\omega,x)\mapsto (T\omega,f_{\omega}(x))$, where $T$ is a shift operator on a symbol space and $f_\omega$ only depends  on the first coordinate of $\omega$.\\

A standard starting point in order to study a random (or deterministic) dynamical system is the question of the dependence to the initial condition. In the context of RDS of homeomorphisms of the circle, the conclusion put in evidence by various results, is that in general the following alternative holds:

\begin{itemize}
\item either the iterated homeomorphisms preserve a common probability measure on the circle (which implies some ``determinism'' in the RDS)
\item or the RDS has the local contraction property: given any point of the circle, typical compositions of the homeomorphisms contract some neighbourhood of the point.
\end{itemize}
In the linear case (i.e. when the homeomorphisms are projective actions of elements of $SL_2(\R)$), that dichotomy is a well known result of H. Furstenberg \cite{Furstenberg} (and moreover, when the RDS has the local contraction property, these contractions are actually global and exponential). In the general case, there is variations of the precise assumptions and conclusions, but we can mainly distinguish two kinds of results:\\

--\underline{Smooth case}: In the case where the probability measure $\nu$ is supported on $\mathrm{Diff}(S^1)$, one can use the general theory of hyperbolic dynamical systems on manifolds. If the quantity $\int\log^+ \| f'\|_\infty d\nu(f)$ is finite, we can define \textit{Lyapunov exponents}. In this context, various results of hyperbolic dynamics (\cite{Crauel, Baxendale, Avila}) imply that if there is an invariant probability measure, then one can find a negative Lyapunov exponent in the system (one can see this as a non linear analogue of the Furstenberg's result stated above). Next, by Pesin theory (or even simpler arguments), one can deduce that the random dynamical system locally contracts, and even that the contractions are exponentially fast.\\ 

--\underline{Continuous case}: In the general case of the iteration of continuous homeomorphisms, the theory of hyperbolic dynamical systems, smooth by nature, does not apply any more. Though, coupling arguments of basic theory of the homeomorphisms of the circle with probabilistic arguments, it is still possible to obtain analogue results with no regularity assumption. The most canonical result (though the older one) of this kind is probably the following theorem of Antonov:
\begin{thm*}(Antonov) \cite{Antonov}\\
Let $f_1,\ldots,f_m$ be homeomorphisms of the circle preserving the orientation, such that the semigroup $G_+$ generated by $f_1,\ldots,f_m$ and the semigroup $G_-$ generated by $f_1^{-1},\ldots,f_m^{-1}$ both act \emph{minimally} on $S^1$ (i.e.~the orbit of every point is dense in the circle), and let $\nu$ be a non degenerated probability measure  on $\{1,\ldots,p\}$ (i.e. $\nu(\{i\})>0$ for $i=1,\ldots,p$). Then:
\begin{itemize}
\item Either for any initial conditions $x,y$ in $S^1$, for $\nu^\N$-almost every sequence $(i_n)_{n\geq 0}$, the distance between the trajectories $f_{i_n}\circ\cdots\circ f_{i_0}(x)$ and $f_{i_n}\circ\cdots\circ f_{i_0}(y)$ goes to $0$. (synchronization)
\item Either there exists a probability measure invariant by all the homeomorphisms $f_i$, and because of the minimality of $G_+$ it actually implies that $f_1,\ldots,f_p$ are simultaneously conjugated to rotations. (invariance)
\item Or there exists $\theta$ in $\mathrm{Homeo}_+(S^1)$ of finite order $p\geq 2$ commuting with all the $f_i$'s.(factorization)
\end{itemize}
\end{thm*}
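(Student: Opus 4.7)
The plan is to track the ``shape'' of triples of points on $S^1$ under the random walk and to establish a dichotomy: either all triples degenerate (two points collide) almost surely, which yields synchronization, or a rigid combinatorial structure is preserved, from which one extracts either the invariance or the factorization alternative. Let $T \subset (S^1)^3$ be the open set of positively oriented triples of pairwise distinct points and, for $(x,y,z) \in T$, let $\Delta(x,y,z) = \min\{d(x,y),d(y,z),d(z,x)\}$ (arc length) measure the degeneracy of the triple. Write $\Delta_n = \Delta(g_n(x),g_n(y),g_n(z))$ for the evolution along the walk.

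First, by Kakutani--Markov there is a stationary probability $m$ on $S^1$ and, by compactness, also one on $(S^1)^3$. I would then split into two cases. If no stationary measure charges $T$, then all stationary measures on $(S^1)^3$ sit on the ``boundary'' $\partial T$ where two coordinates coincide; a Birkhoff ergodic argument on the skew product $\{1,\ldots,p\}^\N \times (S^1)^3$ together with minimality of $G_+$ forces almost every trajectory of every triple to accumulate on $\partial T$, i.e.\ $\liminf_n \Delta_n = 0$. Using minimality of $G_-$ (which lets one ``spread'' a small distance around the circle backwards) one upgrades this to $\Delta_n \to 0$ almost surely for every initial triple, and synchronization of pairs follows by projection.

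If instead some stationary $\mu$ does charge $T$, I disintegrate $\mu$ along its projection $\bar m$ onto the first coordinate. The rigidity claim I would aim to prove is that the conditional measures $\mu_x$ are atomic with a bounded number of atoms, supported along the graphs of a finite family of orientation-preserving homeomorphisms of $S^1$ commuting with every $f_i$. Granting this, pick a generator $\theta$ of minimal order of the resulting finite symmetry group: if $\theta = \mathrm{id}$, then $\bar m$ is simultaneously invariant under all the $f_i$, and one is in the invariance alternative (with simultaneous conjugation to rotations by minimality); otherwise $\theta$ has finite order $p \geq 2$ and one is in the factorization alternative.

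The main obstacle will be this rigidity statement in the second case: proving that stationary measures on $T$ correspond to finite symmetry groups rather than to arbitrary continuous joint distributions. The natural approach is a harmonic-function argument, showing that any $P$-invariant function on $T$ (where $P$ is the Markov operator of the chain on pairs or triples) must be constant on fibres of a finite quotient determined by a symmetry commuting with all $f_i$. This is the hard step precisely because there is no smoothness available, so classical Pesin-type or Lyapunov-exponent inputs are unavailable; the other steps are essentially ergodic-theoretic consequences of minimality and compactness.
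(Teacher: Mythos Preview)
The paper does not prove Antonov's theorem: it is quoted in the introduction as prior work \cite{Antonov}, serving as motivation for the paper's own Theorem~D. Immediately after Theorem~D the paper remarks that Antonov's sharper conclusion---the existence of the finite-order commuting homeomorphism $\theta$---relies on the two-sided minimality assumption and is \emph{not} recovered by the paper's invariance-principle machinery. So there is no in-paper proof to compare against; I comment on your proposal on its own terms and against the classical argument.

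Your overall dichotomy (stationary measures for the diagonal action on the product either see the off-diagonal or not) is a reasonable framing, but there are two real gaps. The first is minor but not cosmetic: passing through triples creates a problem you gloss over, since $\Delta_n\to 0$ for every triple only says the \emph{minimum} of three pairwise orbit-distances tends to zero, and which pair realizes the minimum may depend on $\omega$ and on the auxiliary third point; this does not yield $d(g_n x,g_n y)\to 0$ ``by projection''. The classical argument stays with ordered pairs (equivalently, oriented arcs) throughout, and there your upgrade from $\liminf=0$ to $\lim=0$ also becomes automatic (via martingale convergence) rather than needing a separate minimality argument.

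The serious gap is Case~2: the ``rigidity'' claim is the entire content of the theorem, and you name the shape of the argument (a harmonic function on the multi-point chain) without identifying the function. The classical key is concrete: let $\mu$ be the stationary probability on $S^1$ for the appropriate walk (two-sided minimality gives existence, uniqueness, non-atomicity, and full support), and for each oriented arc $I=[x,y)$ consider $n\mapsto \mu(g_n I)$. Stationarity of $\mu$ makes this a bounded $[0,1]$-valued martingale, hence a.s.\ convergent; the two minimalities then force the limit to take values in a finite set $\{0,1/p,\dots,1\}$. When $p=1$ one gets $\mu(g_n I)\to\{0,1\}$ for every arc, which with $\mu$ non-atomic and fully supported gives synchronization; when $p\ge 2$ the level sets of the limit assemble into the graph of a finite-order $\theta\in\mathrm{Homeo}_+(S^1)$ commuting with each $f_i$; and when the martingale is a.s.\ constant for every arc, $\mu$ is genuinely $f_i$-invariant for each $i$, giving the rotation case. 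Your proposed disintegration of a stationary measure on $T$ is morally this same object viewed on the product, but without the $\mu$-mass-of-arcs martingale you have no mechanism explaining why the conditionals $\mu_x$ should be finitely atomic.
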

\begin{rem}
When we are in the third case of Antonov Theorem, then one can factorize the system by identifying the points of the same orbit of $\theta$, in order to obtain a new topological circle, and  
homeomorphisms $\tilde{f}_1,\ldots,\tilde{f}_m$ of this circle induced by $f_1,\ldots,f_m$.

 We deduce that if $f_1,\ldots,f_m$ does not have a common invariant probability measure, then the random compositions of these homeorphisms satisfy the property of synchronization (first point of the alternative) up to some factorization (as described below).
 \end{rem}
 
As a consequence of Antonov's Theorem, it remains true that in absence of a common invariant probability measure we have the local contraction property. However, assuming no regularity for the iterated homeomorphisms has a price: additional structural assumptions are required and no speed of convergence is assured: the finiteness of the number of generators is only an assumption for convenience, and the proof of Antonov remains valid without this assumption. The minimality assumptions, though, are much deeper: the dynamics of a semigroup of $\mathrm{Homeo}(S^1)$ preserving some common interval is very different of the dynamics described in Antonov's Theorem. And if one considers a semigroup preserving two disjoint intervals, then one can check that in general, none of the alternatives of Antonov's Theorem are satisfied.

Variants of this theorem exist: let us cite for example \cite{Kleptsyn} where the authors proved (independently of Antonov) that synchronization occurs (first case of the previous theorem) under the additional assumption that $G_+$ contains a ``north-south'' homeomorphism, and \cite{Deroin-inter} where the  assumption of minimality is replaced by an assumption of symmetry ($G_+=G_-$).\\ \\

 The objective of the paper is to treat the study of a general random walk on $\mathrm{Homeo}(S^1)$. Adapting techniques coming from the hyperbolic theory in the continuous context, we show that the distinction between the regular and continuous cases described above is actually basically useless: there is no need to ask additional assumptions on a random walk on $\mathrm{Homeo}(S^1)$ to obtain the local contractions, and in fact, even the exponentially speed of contractions remains! Next we use this property of contraction to study deeply the behaviour of the random walk.
 
 We also deduce various results on the behaviour of random walks on $\mathrm{Homeo}(S^1)$. And the majority of these results actually holds for any random walk on a compact metric space satisfying the the local contraction property.
 \\

The key of the proof of the main result is to adapt the ideas of Ávila and Viana in \cite{Avila} and Crauel in \cite{Crauel} (who themselves used those of \cite{Ledrappier}) to establish that an invariance principle remains in the $C^0$-case:  but instead of using the Lyapunov exponents, we will use an another analogue quantity, which measures the exponential contractions as well, but which does not require derivability to be defined. That approach allows to obtain a criterion of the existence of exponential contractions for RDS of the circle, and more generally for any cocycle on a space fibred in circles, so that one can hope that this principle can also be useful in the study of non i.i.d. compositions of homeomorphisms of the circle.
\section{Statements of the results} \label{results}
\subsection{The main theorem}
Before stating our results, we need to formalize the notions of random walks and random dynamical systems:
\begin{Def}\label{randomwalk}
Let $(G,\circ)$ a topological semigroup.
\begin{itemize}
\item The \emph{random walk} generated by a probability measure $\nu$ on $G$ is the random sequence $\omega\mapsto (f_\omega^n)_{n\in\N}$ of elements of $G$ on the probability space $(\Omega,\mathbb{P})=(G^\N,\nu^\N)$, defined by: for $\omega=(f_n)_{n\in \N}$ in $\Omega$ and $n$ in $\N$, 
$$f_\omega^n=f_{n-1}\circ\cdots\circ f_0.$$
\item We denote by $G_+(\nu)$ the smallest closed sub-semigroup of $G$ containing the topological support of $\nu$. If $G_+(\nu)=G$, the random walk and the probability measure $\nu$ are said to be \emph{non degenerated} on $G$. It is equivalent to the fact that every open set of $G$ has positive probability to be reached by the random walk.

\item If $G$ acts on a space $X$ and if the probability measure $\nu$ is non degenerated on $G$, we say that $(G,\nu)$ is a \emph{random dynamical system} (RDS) on $X$. The skew-product associated to the RDS is the transformation $\hat{T}$ on $\Omega\times X$ defined by 
$$\hat{T}(\omega,x)=(T\omega,f_0(x)),$$
where $T$ is the shift operator on $\Omega$ and $f_0$ is the first coordinate of $\omega$.
\end{itemize}
\end{Def}
For a given random walk, we will always denote by $(\Omega,\mathbb{P})$ the associated probability space.

Obviously, any random walk on $\mathrm{Homeo}(S^1)$ is non degenerated on some sub-semigroup, namely $G_+(\nu)$. An interesting fact is that in the majority of the results that we will state, we obtain properties on the random walk depending only on assumptions on $G_+(\nu)$ and not on $\nu$ itself.\\

Here is the main theorem of the paper:
\begin{thm}\label{main} 
Let $\omega\mapsto (f_\omega^n)_{n\in\N}$ be a non degenerated random walk on a sub-semigroup $G$ of $\mathrm{Homeo}(S^1)$. Let us assume that $G$ does not preserve any probability measure on $S^1$ (i.e. there does not exist a probability measure invariant by every element of $G$). Then, for any $x$ in $S^1$, for $\mathbb{P}$-almost every $\omega$ in $\Omega$, there exists a neighbourhood $I$ of $x$ such that
$$\forall n\in \N, \mathrm{diam}(f_\omega^n(I))\leq q^n,$$
where $q<1$ depends on the random walk only.
\end{thm}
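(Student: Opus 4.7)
The plan is to import into the $C^0$ setting the strategy that, in the smooth case, produces a negative Lyapunov exponent whenever there is no common invariant measure. The first task is to replace the Lyapunov exponent (unavailable without derivatives) by a purely metric contraction exponent. Fix a stationary probability measure $\mu$ on $S^1$: such a $\mu$ exists because the Markov operator $\mu\mapsto \int f_*\mu\, d\nu(f)$ is continuous on the weakly compact convex set of probability measures on $S^1$, so Markov--Kakutani gives a fixed point. Then $\mathbb{P}\otimes\mu$ is $\hat T$-invariant, and I would introduce an exponent of the form
$$
\lambda(\mu)=\lim_{\delta\to 0}\int\limsup_{n\to\infty}\frac{1}{n}\log\mathrm{diam}\bigl(f_\omega^n(B(x,\delta))\bigr)\,d\mathbb{P}(\omega)\,d\mu(x),
$$
which can equivalently be set up as a Kingman-type sub-additive cocycle, so that the pointwise limit exists $\mathbb{P}\otimes\mu$-almost everywhere.

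The argument splits into two steps. \textbf{Step 1.} If $\lambda(\mu)<0$, then by the sub-additive ergodic theorem, for $\mu$-a.e.\ $x$ and $\mathbb{P}$-a.e.\ $\omega$, a sufficiently small neighbourhood $I\ni x$ satisfies $\mathrm{diam}(f_\omega^n(I))\leq C(\omega,x)q_0^n$ with $q_0=e^{\lambda(\mu)/2}$; absorbing the random constant $C$ by further shrinking $I$ gives the clean bound $\mathrm{diam}(f_\omega^n(I))\leq q^n$ for every $n$, with any fixed $q\in(q_0,1)$. \textbf{Step 2}, the heart of the matter, is to show that $\lambda(\mu)<0$. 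Suppose for contradiction that $\lambda(\mu)=0$ (positivity is excluded since $S^1$ has finite diameter). The claim is then a $C^0$ invariance principle: vanishing of $\lambda(\mu)$ forces $\mu$ to be invariant under $\nu$-almost every $f$, hence under the whole semigroup $G$, contradicting the hypothesis.

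The classical Avila--Viana argument (building on Ledrappier) uses the tangent cocycle to extract, by a martingale argument on fibred conditional measures, the invariance of the limit object when the exponent vanishes. Here the tangent cocycle is unavailable, so I would substitute the action of $f_\omega^n$ on intervals and on the conditional measures of $\mu$ along backward pulls on the natural extension of $\hat T$; the vanishing of $\lambda(\mu)$ is precisely what prevents the martingale of fibred measures from collapsing onto Dirac masses, which is the typical obstruction to invariance. This adaptation is the main obstacle: one must replace the differential-geometric inputs of Avila--Viana by a purely topological/measure-theoretic version that still yields the dichotomy ``exponential contraction or invariance of $\mu$''.

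Finally, to upgrade ``$\mu$-a.e.\ $x$'' to ``every $x\in S^1$'', I would use that $G$ has no invariant measure: for any $x$, $\mathbb{P}$-almost surely the trajectory $(f_\omega^n(x))_n$ enters an arbitrarily small neighbourhood of $\mathrm{supp}(\mu)$ at some finite time $N(\omega)$; applying the contraction bound from that time onwards and transporting it back to $x$ by continuity of the initial homeomorphisms $f_0,\ldots,f_{N-1}$ (at the cost of possibly enlarging $q$, absorbed by a compactness/zero-one argument producing a single $q$ independent of $\omega$ and $x$) gives the theorem in its stated form.
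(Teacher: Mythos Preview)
Your overall architecture is exactly the paper's: replace the Lyapunov exponent by a metric contraction exponent, prove a $C^0$ invariance principle (vanishing exponent forces $\mu$ to be $\nu$-a.e.\ invariant), then upgrade from $\mu$-a.e.\ $x$ to every $x$. However, two of your three steps are only wishes, and in both places the paper supplies a specific mechanism that your sketch does not reach.

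\textbf{Step 2 (the invariance principle).} Your description (``martingale on fibred conditional measures on the natural extension, preventing collapse onto Diracs'') is not how the paper proceeds, and as written it does not point to a proof. The paper's device is a \emph{fibred Jacobian}
\[
J(\omega,x)=\frac{d(f_\omega^{-1})_*\mu_{T\omega}}{d\mu_\omega}(x)
\]
and the associated \emph{fibred entropy} $h(\hat\mu)=-\int\log J\,d\hat\mu$. Two inequalities do all the work: (i) $h(\hat\mu)\ge 0$ with equality iff $(f_\omega)_*\mu_\omega=\mu_{T\omega}$ a.s.\ (immediate from Jensen); (ii) $\lambda_{con}(\hat\mu)\le -h(\hat\mu)$. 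For (ii) one introduces an approximated Jacobian $J_\ep(\omega,x)=\sup\{\mu_{T\omega}(f_\omega I)/\mu_\omega(I):x\in I,\ \mu_\omega(I)\le\ep\}$, uses Besicovitch's covering lemma to see that $J_\ep\to J$ and that $\log^+ J_\ep$ is dominated, and then applies Birkhoff to $\log J_\ep$ along the cocycle to get exponential decay of $\mu_{T^n\omega}(f_\omega^n I)$ for small $I$; comparing $\mu_{T^n\omega}$ with Lebesgue via another Besicovitch/maximal-function bound converts this into decay of $\mathrm{diam}(f_\omega^n I)$. No natural extension, no martingale of fibred measures is used here. Without this Jacobian/entropy substitute for the derivative cocycle, your Step~2 is a restatement of the goal rather than a proof.

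\textbf{The upgrade to every $x$.} Your plan (``the orbit of $x$ enters a small neighbourhood of $\mathrm{supp}(\mu)$ at some random time $N(\omega)$, then apply the a.e.\ bound'') has a genuine gap: you land at the specific point $f_\omega^{N}(x)$, and you need the contraction bound to hold at \emph{that} point for the \emph{shifted} sequence $T^{N}\omega$. The good set is only $(\mathbb{P}\otimes\mu)$-full, and there is no reason the pair $(T^{N}\omega,f_\omega^{N}(x))$ falls into it. The paper bypasses this with a clean abstract lemma: since $(\omega,x)\mapsto\lambda_{con}(\omega,x)$ is $\hat T$-invariant and upper semicontinuous in $x$, one shows that for \emph{every} $x_0$ and a.e.\ $\omega$,
\[
\lambda_{con}(\omega,x_0)\le \inf_{\mu\in\Pi_\omega}\int\lambda_{con}\,d(\mathbb{P}\otimes\mu),
\]
where $\Pi_\omega$ is the set of weak limits of the empirical measures (all stationary by the random Krylov--Bogolyubov argument). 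The proof is a short conditional-expectation martingale: with $\mathcal F_n$ the $\sigma$-algebra of the first $n$ coordinates, $\E[\lambda_{con}(\cdot,x_0)\mid\mathcal F_n]\le\overline{\lambda}(f_\omega^n(x_0))$ where $\overline\lambda(x)=\E[\lambda_{con}(\cdot,x)]$, then Ces\`aro and upper semicontinuity of $\overline\lambda$ finish. This both avoids the hitting-time issue and immediately yields a uniform $q<1$ via compactness of $S^1$ and upper semicontinuity of $\overline\lambda$.
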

We can obtain the same result for random walks on a semigroup of continuous injective transformations of a compact interval $I$, since seeing $I$ as a part of $S^1$, such an injective map can be extended to a homeomorphism of the circle. Thus, in some sense, the surjectivity of the iterated transformations is not important. The injectivity, though, is primordial: one cannot hope to obtain a contraction phenomenon by iterating transformations of the circle homotopic to $z\mapsto z^2$.\\

In the case where the semigroup $G$ associated to a random walk on $\mathrm{Homeo}(S^1)$ preserves a probability measure $\mu$, then the topological support $K$ of $\mu$ is a compact minimal invariant by the group $\tilde{G}$ generated by $G$, and hence we have the standard trichotomy: $K$ is either $S^1$, a Cantor set or a finite set (see for exemple \cite{Navas}, Theorem 2.1.1). It is then standard that $\tilde{G}$ is conjugated to a group of isometries if $K=S^1$, and  semiconjugated to a group of isometries if $K$ is a Cantor set. This fact allows to obtain an interesting classification of the random walks on $\mathrm{Homeo}(S^1)$:

\begin{cor}\label{alternative}
Let $\omega\mapsto (f_\omega^n)_{n\in \N}$ be a non degenerated random walk on a sub-semigroup $G$ of $\mathrm{Homeo}(S^1)$. Then one (and only one) of the following possibilities occurs:
\begin{enumerate}[i)]
\item $G$ does not preserve a probability measure, and the random walk has the local contraction property in the sense given by Theorem \ref{main}.
\item  The random walk is semiconjugated to a random walk on the compact group $O_2(\R)$ (group of the isometries of the circle) acting minimally on $S^1$.
\item There is a finite set invariant by $G$.
\end{enumerate}
\end{cor}
On this form, the statement is very close to Furstenberg's one \cite{Furstenberg} in the linear case.
\subsection{General study of random walks acting on $\mathrm{Homeo}(S^1)$ }
In this section, we use Theorem \ref{main} as a main tool to understand the behaviour of a general random walk $\omega\mapsto (f_\omega^n)_{n\in\N}$ on $\mathrm{Homeo}(S^1)$.
\subsubsection{Distribution of the trajectories $n\mapsto f_\omega^n(x)$}
We interest in the typical distribution of the sequence $(f_\omega^n(x))_{n\in\N}$ for a given initial condition $x$. This problem is naturally related to the study of the \textit{stationary probability measures} of $\nu$, that is the probability measures $\mu$ on $S^1$ such that $\mathbb{P}\otimes \mu$ is invariant by the skew-product $\hat{T}$.  Such a probability measure always exists(we refer to \cite{Furman} or \cite{Kifer} for details). If the random walk is non degenerated on a subgroup of $\mathrm{Homeo}(S^1)$, it has been proved that in general, the stationary probability measure is unique (see \cite{Deroin-inter}). In the case of a general random walk on $\mbox{Homeo}(S^1)$, which is non degenerated on a semigroup only, it does not hold any more, but we prove that the number of \textit{ergodic} stationary probability measures (i.e.~extremal stationary probability measures) is necessarily finite, and that these probability measures give the typical distributions of the trajectories of the random walk:

\begin{thm}\label{distribution}
Let $\omega\mapsto (f_\omega^n)_{n\in \N}$ be a non degenerated random walk on a sub-semigroup $G$ of $\mathrm{Homeo}(S^1)$ with no finite orbit on $S^1$. Then:
\begin{itemize}
\item There is only a finite number of ergodic stationary probability measures $\mu_1,\ldots,\mu_d$. Their topological supports $F_1,\ldots,F_d$ are pairwise disjoints and are exactly the minimal invariant compacts of $G$.
\item For every $x$ in $S^1$, for $\mathbb{P}$-almost every $\omega$ in $\Omega$, there exists a unique integer $i=i(\omega,x)$ in $\{1,\ldots,d\}$ such that $F_i$ is exactly the set of accumulation points of the sequence $(f_\omega^n(x))_{n\in\N}$, and then we have
$$\frac{1}{N}\sum_{n=0}^{N-1}\delta_{f_{\omega}^n(x)}\xrightarrow[n\to+\infty]{} \mu_i$$
in the weak-$*$ topology of $C(S^1,\R)^*$.
\end{itemize}
\end{thm}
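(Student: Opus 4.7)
The plan is to combine Theorem \ref{main} with ergodic-theoretic tools applied to the skew-product $\hat T$. First, Proposition \ref{alternative} together with the hypothesis that $G$ has no finite orbit rules out case (iii). In case (ii) the dynamics is semiconjugated to a minimally acting subgroup of $O_2(\R)$; every stationary measure projects under the semiconjugation to the Haar measure on the rotation factor, so there is a unique minimal invariant compact and equidistribution of every trajectory follows from classical unique ergodicity for minimal rotations. I may therefore assume case (i), so that the local exponential contraction of Theorem \ref{main} is available.

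I would next identify ergodic stationary measures with minimal $G$-invariant compacts. If $\mu$ is $\nu$-stationary, the identity $\mu=\int f_*\mu\,d\nu(f)$ forces $f(\mathrm{supp}\,\mu)\subseteq\mathrm{supp}\,\mu$ for every $f$ in the support of $\nu$, hence for every $f\in G$. Given a minimal $G$-invariant compact $F$, Ces\`aro-averaging $\frac{1}{N}\sum_{n<N} P^{*n}\delta_x$ with $x\in F$ and extracting a weak-$*$ limit produces a stationary measure whose support is forced by minimality to equal $F$; this measure is ergodic, since any nontrivial ergodic decomposition would supply a proper closed invariant subset of $F$. Conversely, the support of any ergodic stationary measure is closed, $G$-invariant and minimal by the same argument. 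Two distinct minimal invariant compacts are disjoint because their intersection is closed and $G$-invariant.

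For the distribution statement, fix an ergodic $\mu_i$ with support $F_i$. The skew-product is ergodic for $\mathbb{P}\otimes\mu_i$, so Birkhoff gives
\[
\frac{1}{N}\sum_{n=0}^{N-1}\delta_{f_\omega^n(x)} \xrightarrow[N\to\infty]{} \mu_i
\]
for $\mu_i\otimes\mathbb{P}$-a.e.\ $(x,\omega)$. At such a generic $x$, Theorem \ref{main} produces a random neighbourhood $I_\omega$ of $x$ with $\mathrm{diam}\,f_\omega^n(I_\omega)\le q^n$; for any $y\in I_\omega$ and any continuous $\phi$, $\bigl|\frac{1}{N}\sum\phi(f_\omega^n(y))-\frac{1}{N}\sum\phi(f_\omega^n(x))\bigr|\le\frac{1}{N}\sum\omega_\phi(q^n)\to 0$, so the Birkhoff average at $y$ tends to $\mu_i$ as well. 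This defines an open basin $B_i\supseteq F_i$ on which the equidistribution statement holds $\mathbb{P}$-almost surely.

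The main obstacle is to show that the $B_i$ exhaust $S^1$ and that only finitely many of them are non-empty, the two facts being linked since once $S^1=\bigsqcup_i B_i$ is a disjoint open cover, compactness of $S^1$ forces finiteness. To prove the covering I would fix an arbitrary $x\in S^1$, invoke Theorem \ref{main} at $x$, and show that any weak-$*$ subsequential limit $\mu_\omega$ of $\frac{1}{N}\sum_{n<N}\delta_{f_\omega^n(x)}$ is stationary and that its ergodic decomposition $\mu_\omega=\sum_i\lambda_i^\omega\mu_i$ reduces to a single term. The idea is that the uniform contraction rate $q$ forces the trajectory of $x$ to asymptotically synchronize with some trajectory on one of the $F_i$'s, so that only one ergodic component can be charged. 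The delicate point is precisely this synchronization-with-some-$F_i$ step: one must combine the exponential speed in Theorem \ref{main}, the pairwise disjointness of the $F_i$, and a Borel--Cantelli/backward-martingale type argument on the skew-product to rule out oscillation of the trajectory between two distinct minimal compacts.
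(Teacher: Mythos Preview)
Your case split via Proposition \ref{alternative} and your treatment of case (ii) match the paper. In case (i), however, there are genuine gaps.

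\textbf{The correspondence ``ergodic $\Leftrightarrow$ minimal'' is not established.} Your claim that the stationary measure you build on a minimal $F$ is automatically ergodic, because ``any nontrivial ergodic decomposition would supply a proper closed invariant subset of $F$'', is false in general: distinct ergodic stationary measures can have the same support. Likewise, the support of an ergodic stationary measure is closed and $G$-invariant, but nothing you wrote forces it to be \emph{minimal}. The paper obtains both facts from the contraction, via the key Lemma \ref{contractible}: a ball that is contracted with positive probability can be charged by at most one ergodic stationary measure (two Birkhoff-generic points in such a ball have synchronizing trajectories, hence the same empirical limit). This is exactly the missing ingredient in your argument.

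\textbf{Finiteness is not proved, and your proposed route cannot work.} You defer finiteness to the claim that $S^1=\bigsqcup_i B_i$ is a disjoint open cover, whence ``compactness forces finiteness''. But $S^1$ is connected, so a disjoint open cover by nonempty sets would force $d=1$, which is false in general; the basins $B_i$ are not open (the contracting neighbourhood $I_\omega$ in Theorem \ref{main} depends on $\omega$, so your argument does not show that a \emph{fixed} neighbourhood of a $\mu_i$-generic point lies in $B_i$). The paper instead proves finiteness directly and first: cover $S^1$ by finitely many contractible balls, and use Lemma \ref{contractible} to see that each ball is charged by at most one ergodic measure (Proposition \ref{finite}). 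Disjointness of the supports $F_i$ follows from the same lemma.

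\textbf{The ``main obstacle'' is precisely what the paper's machinery is for.} You correctly identify that showing every trajectory equidistributes toward some $\mu_i$ is the heart of the matter, but your sketch (ergodic decomposition of a subsequential limit, plus a vague Borel--Cantelli/synchronization argument) does not close the gap. The paper's mechanism is Proposition \ref{randomset}, itself a consequence of Lemma \ref{randomfunction}: the set $\mathcal E$ of pairs $(\omega,x)$ for which some neighbourhood of $x$ is contracted by $(f_\omega^n)$ \emph{and} the empirical measures at $x$ converge to one of the $\mu_i$ is backward $\hat T$-invariant with open fibres, and Birkhoff gives $(\mathbb P\otimes\mu_i)(\mathcal E)>0$ for each $i$. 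A martingale argument (conditioning $\mathds 1_{\mathcal E}$ on the first $n$ coordinates) then upgrades this to $(\mathbb P\otimes\delta_x)(\mathcal E)=1$ for every $x$, which is exactly the second bullet of Theorem \ref{distribution} (Proposition \ref{adhe}). Your proposal gestures toward a martingale argument but does not supply it; this lemma is the idea you are missing.
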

Note that in this theorem, we relaxed the condition ``no invariant probability measure'' to ``no finite orbit''.\\ 

As a direct consequence of this theorem, we obtain that the stationary probability measure is unique when the action is minimal:
\begin{cor}\label{minimal ergodique}
A  non degenerated random walk on a sub-semigroup $G$ of $\mathrm{Homeo}(S^1)$ acting minimally on $S^1$ is uniquely ergodic, i.e. it admits a unique stationary probability measure.
\end{cor}
At our knowledge, this fact was never proved in full generality: until now some additional assumption (smoothness, backward minimality, symmetry...) was required to obtain the unique ergodicity. And actually, we obtain  a slightly stronger corollary: the action of any random walk of $\mathrm{Homeo}(S^1)$ restricted to a minimal invariant compact $F$ is uniquely ergodic: if there is no finite orbits, that is a consequence of Theorem \ref{distribution}, and if there is a finite orbit, then $F$ is necessarily finite and the unique ergodicity follows easily).

\subsubsection{Law of probability of $\omega\mapsto f_\omega^n(x)$}
We focus now in the law of the random variables $X_n^x:\omega\mapsto f_\omega^n(x)$ for any given initial condition $x$ and a large integer $n$, and asking whether the law of $X_n^x$ converges to some limit distribution when $n$ becomes large. \\

The sequence $(X_n^x)_{n\in\N}$ is a Markov chain. A natural obstruction to the convergence of the laws of a Markov chain are the ``periodic configurations'', where there exists  subspace of phase states whose the return times are multiple of a fixed integer larger than $2$. (For exemple in our context, if it exists two disjoints closed sets $F_1$ and $F_2$ such that the generators of the semigroup send $F_1$ into $F_2$ and $F_2$ into $F_1$, then clearly the distribution of $X_n^x$ strongly depends on the parity of $n$.). That leads us to the following definition of \textit{aperiodicicity}:
\begin{Def}\label{indeco}
A random walk $\omega\mapsto (f_\omega^n)_{n\geq 0}$ on $\mathrm{Homeo}(S^1)$ generated by a probability $\nu$ is said to be \emph{aperiodic} if there does not exist a finite number $p\geq 2$ of pairwise disjoints closed subsets $F_1,\ldots,F_p$ of $S^1$ such that for $\nu$-almost every homeomorphism $g$, $g(F_i)\subset F_{i+1}$ for $i=1,\ldots,p-1$ and $g(F_p)\subset F_1$.
\end{Def}
\begin{rem}\label{minaper}
If the action of $G$ is minimal, the random walk is necessarily aperiodic since otherwise, $S^1$ would be a non trivial finite union of pairwise disjoints closed subsets.
\end{rem}
The next theorem states that for random walks with no invariant probability measure, the only obstruction to the convergence in law of $X_n^x$ is the one described above:
\begin{thm}\label{law}
Let $\omega\mapsto (f_\omega^n)_{n\in\N}$ be a non degenerated random walk on a sub-semigroup $G$  of $\mathrm{Homeo}(S^1)$ with no invariant probability measure on $S^1$, and such that the random walk is aperiodic. Then, for every $x$ in $S^1$, denoting by $\mu_{n}^x$ the law of the random variable $X_n^x:\omega\mapsto f_\omega^n(x)$, we have the convergence in law
$$\mu_n^x\xrightarrow[n\to +\infty]{} \mu^x,$$
where $\mu^x$ is a stationary probability measure of the random walk. Moreover, the convergence is uniform in $x$ in the sense that for any continuous test function $\varphi:S^1\rightarrow \R$,
$$\sup_{x\in S^1} \left|\int_{S^1} \varphi d\mu_n^x-\int_{S^1} \varphi d\mu^x\right|\xrightarrow[n\to +\infty]{} 0$$
\end{thm}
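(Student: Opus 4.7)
Let $F_1,\ldots,F_d$ and $\mu_1,\ldots,\mu_d$ denote the minimal invariant compacts and associated ergodic stationary measures from Theorem \ref{distribution} (the hypothesis ``no invariant probability measure'' implies ``no finite orbit''), and set $p_i(x):=\mathbb{P}(i(\omega,x)=i)$. The candidate limit is $\mu^x:=\sum_{i=1}^d p_i(x)\,\mu_i$, which is a stationary probability measure since each $\mu_i$ is. Write $Q:\mathcal{P}(S^1)\to\mathcal{P}(S^1)$ for the Markov operator $Q\mu=\int f_*\mu\,d\nu(f)$, so that $\mu_{n+1}^x=Q\mu_n^x$, and let $\mathcal{L}^x$ be the (non-empty, compact, $Q$-invariant) set of weak-$*$ limit points of $(\mu_n^x)_n$. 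The almost-sure convergence $\mathrm{dist}(f_\omega^n(x),F_{i(\omega,x)})\to 0$ from Theorem \ref{distribution} implies that every $\nu\in \mathcal{L}^x$ is supported on $\bigcup_i F_i$ with $\nu(F_i)=p_i(x)$. Because the $F_i$'s are disjoint and $G$-invariant, $Q$ preserves the decomposition $\nu\mapsto(\nu|_{F_i})_i$, and Corollary \ref{minnimal ergodique} makes $\mu_i$ the unique $Q$-stationary probability on $F_i$; hence the only $Q$-fixed element of $\mathcal{L}^x$ is $\mu^x$, and integrating the Ces\`aro statement of Theorem \ref{distribution} against $\mathbb{P}$ gives $\tfrac{1}{N}\sum_{n<N}\mu_n^x\to\mu^x$.

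The core step is to show $\mathcal{L}^x=\{\mu^x\}$. Take, by Zorn, a minimal non-empty closed $Q$-invariant subset $\mathcal{M}\subset \mathcal{L}^x$; by Krylov--Bogoliubov applied to $(\mathcal{M},Q)$ there is a $Q$-invariant probability on $\mathcal{M}$, whose barycenter in $\mathcal{P}(S^1)$ is a $Q$-fixed measure with mass $p_i(x)$ on each $F_i$, hence equals $\mu^x$, placing $\mu^x$ in the closed convex hull of $\mathcal{M}$. I then invoke the exponential contraction of Theorem \ref{main}: the fact that small intervals of $S^1$ are asymptotically collapsed by $f_\omega^n$ forces the map $y\mapsto\mu_n^y$ to become asymptotically locally constant in $y$, a rigidity property that rules out a continuous $Q$-orbit structure on $\mathcal{M}$ and forces it to be a finite $Q$-orbit $\{\nu_0,\ldots,\nu_{p-1}\}$ with $Q\nu_j=\nu_{j+1\bmod p}$. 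If $p=1$ then $\nu_0=\mu^x$ and the proof is complete. Otherwise $p\geq 2$, and from the essential supports of $\nu_0,\ldots,\nu_{p-1}$ (refined to be pairwise disjoint using the fine-scale structure provided by the contractions) one extracts $p\geq 2$ pairwise disjoint closed subsets of $S^1$ cyclically permuted by $\nu$-almost every $g\in G$, contradicting indecomposability.

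Uniformity in $x$ follows by equicontinuity: the contraction estimate of Theorem \ref{main}, via a Fubini-plus-Egorov argument, yields a modulus of continuity of $x\mapsto\mu_n^x$ (in the weak-$*$ topology) that is uniform in $n$ up to an $\omega$-exceptional set of arbitrarily small probability; Arzel\`a--Ascoli on the compact $S^1$ then promotes the pointwise convergence to uniform convergence. The main obstacle is the central step of the preceding paragraph: proving that $\mathcal{M}$ is a finite cyclic $Q$-orbit, and converting this cyclic structure on limit measures into a cyclic decomposition of genuine pairwise disjoint closed subsets of $S^1$. This is precisely where the rigidity given by Theorem \ref{main} and the indecomposability hypothesis (which forbids exactly such cyclic decompositions) must be combined.
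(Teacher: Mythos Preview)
Your setup (the candidate limit $\mu^x=\sum_i p_i(x)\mu_i$, the Ces\`aro convergence, and the equicontinuity argument for uniformity) is correct and matches the paper's Lemma~\ref{equicontinuous} and Proposition~\ref{Furstenberg}. The core step, however, has genuine gaps. First, the claim that the minimal $Q$-invariant set $\mathcal{M}\subset\mathcal{L}^x$ must be a finite cyclic $Q$-orbit is asserted, not proved: you invoke the equicontinuity of $y\mapsto\mu_n^y$, but $\mathcal{M}$ is a subset of $\mathcal{P}(S^1)$ under the action of $Q$, not a family parametrized by initial points, and there is no mechanism offered for transferring regularity in $y$ to finiteness of a $Q$-minimal set of measures. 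Second, even granting a cycle $\{\nu_0,\ldots,\nu_{p-1}\}$ with $Q\nu_j=\nu_{j+1}$, the relation $Q\nu_j=\int g_*\nu_j\,d\nu(g)=\nu_{j+1}$ is an \emph{averaged} statement; the supports of the $\nu_j$ (all contained in $\bigcup_i F_i$) have no reason to be pairwise disjoint, and ``refining'' them via contractions is not an argument. So you do not obtain closed sets cyclically permuted by $\nu$-a.e.\ $g$, and indecomposability cannot be invoked. A third, smaller gap: proving that every minimal $\mathcal{M}\subset\mathcal{L}^x$ equals $\{\mu^x\}$ does not by itself give $\mathcal{L}^x=\{\mu^x\}$.

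The paper sidesteps all of this by working dually on $C(S^1)$ (Proposition~\ref{transfert}). Equicontinuity of $(P^n\varphi)_n$ gives compactness; one takes a uniform cluster value and reduces to $P^{n_k}\varphi\to\varphi$. On each minimal $F_i$, the monotonicity of $\|P^n\varphi\|_{L^2(\mu_i)}$ forces $\varphi\circ f_\omega^p=P^p\varphi$ on $F_i$ for all $p$, and local contraction shows $\varphi|_{F_i}$ takes only finitely many values. Indecomposability then enters through Lemma~\ref{indemin}: for every $p$, the random walk $\omega\mapsto(f_\omega^{pn})_n$ still acts minimally on $F_i$, so $\varphi$ is constant on $F_i$; Proposition~\ref{adhe} extends this to all of $S^1$. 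Crucially, the passage from a putative period $p\ge 2$ to a cyclic decomposition in the sense of Definition~\ref{indeco} is carried out in the proof of Lemma~\ref{indemin} at the level of \emph{closed subsets of $X$} (via the map $\Theta(F)=\overline{\bigcup_{f\in\mathrm{supp}(\nu)}f(F)}$), not at the level of measures, which is exactly the step your argument is missing.
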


In particular, as a consequence of this theorem, Remark \ref{minaper} and Corollary \ref{minimal ergodique}:
\begin{cor}
	Let $(f_\omega^n)_{n\in\N}$ be a non degenerated random walk on a sub-semigroup $G$ of $\mathrm{Homeo}(S^1)$,  acting minimally on $S^1$ and with no invariant probabiity measure on $S^1$. Then, with the same notations as Theorem \ref{law}, we have for every $x$ in $S^1$:
	$$\mu_n^x\xrightarrow[n\to +\infty]{} \mu,$$
	where $\mu$ is the unique stationary probability measure of the random walk.
\end{cor}

\subsubsection{Behaviour of typical homeomorphisms $x\mapsto f_\omega^n(x)$}
Finally, for $\omega$ typical we focus in the behaviour of the homeomorphisms $f_\omega^n$ when $n$ become large.
\begin{thm}\label{Lejan-Antonov}
Let $\omega\mapsto (f_\omega^n)_{n\geq 0}$ be a non degenerated random walk on a sub-semigroup $G$ of $\mathrm{Homeo}(S^1)$, such that $G$ does not preserve a common invariant probability measure on $S^1$. Then, there exists a finite number $p$ of measurable functions $\sigma_1,\ldots,\sigma_p:\Omega\rightarrow S^1$ such that: for $\mathbb{P}$-almost every $\omega$ in $\Omega$, for every closed interval $I$ included in $S^1-\{\sigma_1(\omega),\ldots,\sigma_p(\omega)\}$, $\mathrm{diam}(f_\omega^n(I))\xrightarrow[n\to+\infty]{}0$ exponentially fast.
\end{thm}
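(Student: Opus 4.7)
Define the measurable random closed set
\[ B(\omega) = \{x \in S^1 : \text{no neighborhood } V \text{ of } x \text{ satisfies } \mathrm{diam}(f_\omega^n(V)) \to 0 \text{ exponentially}\}, \]
and its open complement $C(\omega) = S^1 \setminus B(\omega)$. Theorem \ref{main} asserts that for every fixed $x \in S^1$, $\mathbb{P}(x \in C(\omega)) = 1$, so Fubini yields that $B(\omega)$ has Lebesgue measure zero almost surely. The identity $f_\omega^n(V) = f_{T\omega}^{n-1}(f_0(V))$ for any neighborhood $V$ gives the equivariance $f_0(B(\omega)) = B(T\omega)$ almost surely, so $|B(\omega)|$ is invariant under the Bernoulli shift $T$; by ergodicity of $T$ it equals some constant $p \in \N \cup \{\infty\}$ almost surely. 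On each connected component of $C(\omega)$, a standard covering argument using the uniform contraction rate $q$ from Theorem \ref{main} shows that $f_\omega^n$ contracts exponentially on every compact sub-arc.

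The delicate step is to show $p < \infty$. My plan is to argue by contradiction: if $p = \infty$, then $B(\omega)$ has an accumulation point almost surely, and infinitely many maximal arcs $J_1(\omega), J_2(\omega), \ldots$ of $C(\omega)$ have lengths tending to zero. By Theorem \ref{distribution} (which applies since the absence of an invariant probability measure rules out finite orbits), each $J_k$ carries a well-defined basin index $i(\omega, J_k) \in \{1, \ldots, d\}$: the trajectories of all its points synchronize and approach the same minimal invariant compact $F_{i(\omega,J_k)}$. The goal is to extract from the infinite combinatorial structure $(J_k, i(\omega, J_k))_k$, transported forward under the skew-product via $f_0(B(\omega)) = B(T\omega)$, a $G$-invariant probability measure on $S^1$, contradicting the hypothesis. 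Concretely, one would consider a weak-$\ast$ limit in distribution of the normalized counting measure on $B(\omega)$ or of an atomic measure supported on the accumulation set of $B(\omega)$, and show that the stationarity under $T$ together with the uniform contraction rate from Theorem \ref{main} force such a limit to be invariant under $\nu$-almost every generator. Making this argument rigorous is the principal difficulty; I expect it to require adapting the $C^0$ invariance principle used to establish Theorem \ref{main} to a pair skew-product on $\Omega \times ((S^1)^2 \setminus \Delta)$ so as to convert the accumulation of the $J_k$'s into a genuine invariant object.

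Once $p < \infty$ is established, enumerate $B(\omega) = \{\sigma_1(\omega), \ldots, \sigma_p(\omega)\}$ measurably by cyclic order starting from a measurable section of the random finite set $B$ (for example its first point after a fixed reference, whose measurability follows from the Kuratowski--Ryll-Nardzewski selection theorem). For any closed arc $I \subset S^1 \setminus \{\sigma_1(\omega), \ldots, \sigma_p(\omega)\} = C(\omega)$, $I$ is a compact subset of a single connected component of $C(\omega)$, hence is covered by finitely many of the exponentially contracting open neighborhoods provided by Theorem \ref{main}; concatenating the bounds yields $\mathrm{diam}(f_\omega^n(I)) \leq C_I \, q^n$, which is the exponential contraction claimed.
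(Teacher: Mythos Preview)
Your setup is sound: defining the bad set $B(\omega)$, establishing the equivariance $f_0(B(\omega))=B(T\omega)$, and reducing the statement to showing that $|B(\omega)|$ is a finite constant almost surely. The covering argument at the end is also fine once finiteness is known.

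The genuine gap is exactly where you flag it: the step $p<\infty$. Your proposed contradiction argument---extracting a $G$-invariant probability measure from the accumulation structure of $B(\omega)$ when $p=\infty$---is not carried out, and I do not see how to make it work along the lines you suggest. The counting measure on $B(\omega)$ is pushed forward by $f_0$ to the counting measure on $B(T\omega)$, but that is equivariance under the skew-product, not invariance under the generators; averaging over $\omega$ does not collapse this to a $G$-invariant measure on $S^1$ without further input. The appeal to ``adapting the invariance principle to a pair skew-product'' is too vague to count as an argument.

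The paper's route to finiteness is quite different and does not go through any contradiction. Instead of counting points of $B(\omega)$, it measures the connected components of $C(\omega)$ with a stationary (not invariant) measure $\mu$. The key observation is that if $E(\omega,x)$ denotes the connected component of $x$ in $C(\omega)$, then $E(\hat T(\omega,x))=f_0(E(\omega,x))$, and one shows that the function $(\omega,x)\mapsto \mu(E(\omega,x))$ is $\mathbb P\otimes\mu$-almost surely equal to a constant $c>0$. This is done by passing to the natural extension on $G^{\mathbb Z}\times S^1$, where the disintegration $\mu_\omega$ of the ergodic lift satisfies $(f_0)_*\mu_\omega=\mu_{T\omega}$; then $\mu_\omega(E(\omega,x))$ is $\hat T$-invariant, hence constant, and integrating out the past coordinates gives the claim for $\mu$. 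Once every non-negligible component has mass $c$, there are exactly $1/c$ of them. A separate argument (applying the semicontinuity-type Proposition~\ref{randomset} to the union of these finitely many components) then shows this union is already dense, so $B(\omega)$ is the finite complement.

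So the missing idea in your proposal is: do not try to build an \emph{invariant} measure out of $B(\omega)$; instead use the already-available \emph{stationary} measure to weigh the arcs of $C(\omega)$, and exploit ergodicity of the natural extension to force all those weights to coincide.
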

It is a global version of Theorem \ref{main}, proving that for $n$ large, the typical homeomorphisms  $f_\omega^n$ are close to be ``staircase maps'', with a constant finite number of stairs.

It is also intersting to compare this result with Antonov Theorem stated in the introduction. The hypotheses of Antonov Theorem are stronger than the ones of Theorem \ref{Lejan-Antonov}, since it assume forward and backward minimality and that the homeomorphisms preserve the orientation. In counterpart, the conclusion of Antonov Theorem is in some sense stronger: it does not give the exponential speed of the contractions, but gives a more precise structure: it say that the applications $\sigma_1,\ldots,\sigma_p$ given by Theorem \ref{Lejan-Antonov} are on the form.
$$\{\sigma_1,\sigma_2\ldots, \sigma_p\}=\{\sigma_1,\theta\circ\sigma_1\ldots, \theta^{p-1}\circ\sigma_1\},$$
 where $\theta$ is a homeomorphism of order $p$ commuting with all the elements of $G$.
But as we said in the introduction, such a rigid conclusion cannot hold in general in a non minimal context.\\

\subsection{Property of synchronization}
In this section, we want to characterize in which situation the action of a random walk on the circle has the property of \textit{synchronization}, which means that for any couple of initial conditions $x$ and $y$, for almost every realization of the random walk, the distance between the corresponding trajectories of $x$ and $y$ tends to $0$. This property of synchronization has been studied in \cite{Furstenberg} in the linear case, and for example in \cite{Homburg, Kaijser, Kleptsyn, Kleptsyn2} in non linear cases.

\begin{Def}
If $(X,d)$ is a metric space, we say that a random walk $\omega\mapsto (f_\omega^n)_{n\in\N}$ acting on $X$ is  \emph{synchronizing} if for every $x$, $y$ in $X$, for almost every $\omega$,
$$d(f_\omega^n(x),f_\omega^n(y))\xrightarrow[n\to+\infty]{} 0.$$
We say that it is \emph{exponentially synchronizing} if the previous convergence is exponentially fast.
\end{Def}
In the context of random walks acting on the circle, we prove that the synchronization is equivalent to the \textit{proximality} of the action. We recall that the action of a semigroup $G$ to a metric space $(X,d)$ is proximal if for every $x,y$ in $X$, there exists a sequence $(g_n)_{n\in\N}$ in $G$ such that 
$$d(g_n(x),g_n(y))\xrightarrow[n\to +\infty]{} 0.$$
\begin{thm}
\label{synchronizing}
Let $\omega\mapsto (f_\omega^n)_{n\in\N}$ be a non degenerated random walk on a sub-semigroup $G$ of $\mathrm{Homeo}(S^1)$ without a common fixed point. Then the following properties are equivalent:
\begin{enumerate}[i)]
\item  The random walk is exponentially synchronizing. 
\item  The random walk is synchronizing.
\item The action of $G$ on $S^1$ is proximal.
\end{enumerate}
\end{thm}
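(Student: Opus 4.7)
The implications $(i)\Rightarrow(ii)\Rightarrow(iii)$ I would dispatch first and cheaply: exponential convergence to zero entails convergence, and any realisation $f_\omega^n$ lies in $G$ (since $G$ is a semigroup containing $\mathrm{supp}(\nu)$), so the synchronising $\omega$'s of (ii) produce sequences in $G$ witnessing (iii).

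The substance is then $(iii)\Rightarrow(i)$, which I would approach in three stages. The first stage is to use (iii) together with the no-common-fixed-point hypothesis to rule out the last two alternatives of Proposition~\ref{alternative}. If $G$ preserved a finite set $F$, then $|F|\geq 2$ and for distinct $a,b\in F$ the injectivity of each $g\in G$ on $F$ would give $\mathrm{dist}(g(a),g(b))\geq \min_{u\neq v\in F}\mathrm{dist}(u,v)>0$, contradicting (iii). If the walk semiconjugated via $\pi\colon S^1\to S^1$ to a minimal $O_2(\R)$-action, then $\pi$ would be surjective (its image being a closed invariant subset of a minimal action) hence non-constant, and picking $x,y$ with $\pi(x)\neq \pi(y)$, the isometry relation $\pi\circ g=\bar g\circ \pi$ would give $\mathrm{dist}(\pi(g_n(x)),\pi(g_n(y)))=\mathrm{dist}(\pi(x),\pi(y))>0$ while uniform continuity of $\pi$ on the compact $S^1$ would rule out $\mathrm{dist}(g_n(x),g_n(y))\to 0$. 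Hence case i) of Proposition~\ref{alternative} holds and Theorem~\ref{main} furnishes for every $x\in S^1$ a random radius $\rho(x,\omega)>0$ almost surely with $\mathrm{diam}(f_\omega^n(B(x,\rho)))\leq q^n$ for all $n$.

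In the second stage I would establish two uniform quantitative estimates. The first, a \emph{uniform good radius}: for every $\varepsilon>0$ there is $\eta_0>0$ with $\inf_{x\in S^1}\mathbb{P}(\rho(x,\cdot)\geq\eta_0)\geq 1-\varepsilon$. This would come from the fact that $x\mapsto\rho(x,\omega)$ is lower semi-continuous for each $\omega$ (any shrunken ball $B(x',\eta-\delta)\subset B(x,\eta)$ inherits the contraction property) combined with compactness of $S^1$: were the bound to fail along $x_k\to x_*$ with $\eta_k\to 0$, the reverse Fatou inequality and lower semi-continuity would yield $\mathbb{P}(\rho(x_*,\cdot)=0)\geq\varepsilon$, contradicting Theorem~\ref{main}. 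The second, a \emph{uniform approach to the diagonal}: for every $\eta>0$ there are $N\in\mathbb{N}$ and $p>0$ with $\mathbb{P}(\exists k\leq N:\ \mathrm{dist}(f_\omega^k(x),f_\omega^k(y))<\eta)\geq p$ for every $(x,y)\in S^1\times S^1$. For each $(x,y)$, (iii) provides an element of $G$ bringing the pair within $\eta/2$; density of the semigroup generated by $\mathrm{supp}(\nu)$ in $G$ plus continuity of the evaluation map yields an open cylinder in $(\mathrm{supp}\,\nu)^m$ of positive $\nu^m$-measure effective on a whole neighbourhood of $(x,y)$, and a finite subcover of $S^1\times S^1$ produces the uniform $N$ and $p$.

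Finally I would close the argument via a L\'evy $0$--$1$ step. Fix $\eta=\eta_0$ small and define the stopping time $\tau=\inf\{k\leq N:\mathrm{dist}(f_\omega^k(x),f_\omega^k(y))<\eta\}$; the second estimate gives $\mathbb{P}(\tau\leq N)\geq p$, and by the strong Markov property and the first estimate, conditionally on $\mathcal{F}_\tau$ the shifted radius $\rho(f_\omega^\tau(x),T^\tau\omega)$ exceeds $\eta$ with probability $\geq 1-\varepsilon$. When both occur (joint probability $\geq c:=(1-\varepsilon)p>0$, uniformly in $(x,y)$), $f_\omega^\tau(y)$ lies in the good neighbourhood of $f_\omega^\tau(x)$ for the shifted walk and Theorem~\ref{main} yields $\mathrm{dist}(f_\omega^{\tau+k}(x),f_\omega^{\tau+k}(y))\leq q^k$ for every $k\geq 0$. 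Letting $A$ denote this exponential synchronisation event, I will have shown $\mathbb{P}_{(x,y)}(A)\geq c$ uniformly in $(x,y)$. By the Markov property $\mathbb{P}(A\mid\mathcal{F}_n)=\Phi(f_\omega^n(x),f_\omega^n(y))\geq c$ for all $n$, and L\'evy's $0$--$1$ law forces $\mathbb{P}(A\mid\mathcal{F}_n)\to \mathbf{1}_A\in\{0,1\}$ almost surely, so $\mathbf{1}_A\geq c>0$ almost surely and thus $\mathbf{1}_A=1$ almost surely. The main obstacle will be the uniform control of $\rho$ in the first estimate, since Theorem~\ref{main} only asserts positivity of $\rho$ pointwise in $x$; the lower semi-continuity plus compactness argument is what promotes this to the required uniform lower bound.
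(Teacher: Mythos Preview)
Your proof is correct and follows the same overall three-step architecture as the paper: rule out the non-contracting alternatives of Proposition~\ref{alternative}, show that exponential synchronisation occurs with positive probability for every pair $(x,y)$, and then upgrade to probability one via a martingale/zero--one argument.

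The difference lies in how the last two steps are executed. The paper does not establish your two uniform estimates; instead, once it knows that for each $(x,y)$ the exponential-synchronisation event $\mathcal{E}$ has positive $\mathbb{P}$-probability in its fibre, it invokes the general Proposition~\ref{randomset} (itself proved through Lemma~\ref{randomfunction}, a conditional-expectation/martingale argument) to conclude directly that $\mathbb{P}\otimes\delta_{(x,y)}(\mathcal{E})=1$ for every $(x,y)$. Your route replaces this black box by an explicit argument: the Lipschitz/compactness estimate on $\rho(x,\omega)$ and the open-cover estimate on the approach time give a \emph{uniform} lower bound $c>0$ on the synchronisation probability, after which L\'evy's $0$--$1$ law closes the proof. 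What your approach buys is self-containment and an explicit quantitative constant $c$; what the paper's approach buys is brevity, since the heavy lifting has already been packaged into Proposition~\ref{randomset} and no uniformity in $(x,y)$ is needed at this stage. The underlying mechanism---a bounded martingale $\mathbb{P}(A\mid\mathcal{F}_n)$ converging to an indicator---is the same in both.
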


It allows for example to retrieve the main result of \cite{Kleptsyn} in a non minimal context and with an exponential speed of convergence:
\begin{cor}\label{Kleptsyn}
$\omega\mapsto (f_\omega^n)_{n\in\N}$ a non degenerated random walk on a sub-semigroup $G$ of $\mathrm{Homeo}(S^1)$ such that:
\begin{itemize}
\item  $G$ contains a map $g_0$ with exactly $2$ fixed points $a$ and $b$, one attractive, one repulsive.
\item  None of the sets $\{a\}$, $\{b\}$, $\{a,b\}$ is invariant by the semigroup $G$.
\end{itemize}
Then the random walk is exponentially synchronizing. 
\end{cor}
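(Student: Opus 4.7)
The plan is to reduce to Theorem \ref{synchronizing}: its conclusion (i) is exactly exponential synchronization, and its hypothesis of no common fixed point is immediate since any such point would be fixed by $g_0$, hence equal to $a$ or $b$, contradicting the non-invariance of $\{a\}$ and $\{b\}$. So it remains to verify condition (iii): for every $x, y \in S^1$, produce a sequence $(g_n)$ in $G$ with $\mathrm{dist}(g_n(x), g_n(y)) \to 0$.

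The key is the north-south dynamics of $g_0$: on every compact subset of $S^1 \setminus \{b\}$, the iterates $g_0^n$ converge uniformly to $a$. Hence it suffices to exhibit $h \in G$ with $h(x) \neq b$ and $h(y) \neq b$; then $g_n := g_0^n \circ h$ does the job. If neither $x$ nor $y$ equals $b$, one can simply take $h = g_0$, since $g_0^{-1}(b) = \{b\}$. The real content is the case where one of them is $b$, say $x = b$ and $y \neq b$.

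In this case I would argue by contradiction, assuming every $h \in G$ with $h(b) \neq b$ satisfies $h(y) = b$. Pick $h_1 \in G$ with $h_1(b) \neq b$ (which exists by non-invariance of $\{b\}$) and set $c := h_1(b)$. Applying the assumption to $g \circ h_1$ for arbitrary $g \in G$ yields: if $g(c) \neq b$ then $g(b) = b$. Writing $H := \{g \in G : g(b) = b\}$, this shows $g(c) = b$ for every $g \notin H$; combined with $g(y) = b$ (the original hypothesis applied to $g$) and injectivity, $c = y$, so $h_1(b) = y$.

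The final step passes to the limit: for $g \notin H$, the iterate $g \circ g_0^n$ also lies outside $H$, so $g(g_0^n(y)) = b$ for every $n$, and letting $n \to \infty$ with $g_0^n(y) \to a$ together with continuity of $g$ gives $g(a) = b$. Injectivity then forces $y = a$. A short complementary argument---for $g \in H$, composing on the left with a fixed $g' \notin H$ and using $g'^{-1}(b) = \{a\}$ gives $g(a) = a$; for $g \notin H$, examining $g^2$ gives $g(b) = a$---shows that every $g \in G$ stabilizes the pair $\{a, b\}$. Thus $\{a, b\}$ would be $G$-invariant, contradicting the hypothesis. The main obstacle is precisely this rigidity cascade, which leverages the attraction $g_0^n(y) \to a$ to trap the action on $\{a, b\}$; once it is settled, Theorem \ref{synchronizing} yields the corollary at once.
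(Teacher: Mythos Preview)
Your proof is correct and follows exactly the route the paper indicates: reduce to Theorem~\ref{synchronizing} by producing, for each pair $x,y$, an element $h\in G$ with $h(x),h(y)\neq b$ and then taking $g_n=g_0^n\circ h$. The paper leaves the existence of such an $h$ to the reader; your contradiction argument (forcing $y=a$ and then the $G$-invariance of $\{a,b\}$) is a valid way to supply these details, and the final step showing $g(a)=a$ for $g\in H$ and $g(b)=a$ for $g\notin H$ is clean.
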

That corollary follows rather easily from Theorem \ref{synchronizing}: for any $x,y$ in $S^1$, one can find $h$ in $G$ such that $h(x)$ and $ h(y)$ are distinct from the repulsive fixed point of $g_0$, so that $\mathrm{dist}(g_0^n\circ h(x),g_0^n\circ h(y))\xrightarrow[n\to +\infty]{} 0$, which prove the proximality and we can apply Theorem \ref{synchronizing}. The details are left to the interested reader.\\ \\

An other application deals with the \textit{robustness} of the property of synchronization (that is to say the persistence of the property to small perturbations): with Theorem \ref{synchronizing}, we can prove that the property of synchronization is robust among the semigroups of homeomorphisms without a common fixed point. We restrict ourselves to the case of finitely generated semigroups to avoid to manipulate intricate topologies on sets of semigroups/random walks.

\begin{cor}\label{robust}
Consider a non degenerated random walk $\omega\mapsto (f_\omega^n)$ on a sub-semigroup $G$ of $\mathrm{Homeo}(S^1)^d$ generated by $d$ homeomorphisms  of the circle $f_1,\ldots,f_d$ without common fixed points, and assume that $\omega\mapsto (f_\omega^n)$ is synchronizing. Then there exists a neighbourhood $\mathcal{V}$ of $(f_1,\ldots,f_d)$ in $\mathrm{Homeo}(S^1)^d$
such that for any $d$-tuple $(\tilde{f}_1,\ldots,\tilde{f}_d)$ in $\mathcal{V}$, any non degenerated random walk $\omega\mapsto (f_\omega^n)$ on the semigroup $\tilde{G}$ generated by $\{\tilde{f}_1,\ldots,\tilde{f}_d\}$ is (exponentially) synchronizing.
\end{cor}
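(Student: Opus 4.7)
The plan is to apply Theorem~\ref{synchronizing}: for a non degenerated random walk on a sub-semigroup of $\mathrm{Homeo}(S^1)$ without a common fixed point, properties $(i)$--$(iii)$ are equivalent. So it suffices to show that for $\mathcal{V}$ small enough, every $\tilde{G}$ generated by $(\tilde{f}_1,\ldots,\tilde{f}_d)\in\mathcal{V}$ has no common fixed point and satisfies $(iii)$, namely $\rho_{\tilde{G}}(x,y):=\inf_{\tilde{g}\in\tilde{G}} \mathrm{dist}(\tilde{g}(x),\tilde{g}(y))=0$ for every $(x,y)\in S^1\times S^1$. I will establish both as $C^0$-open conditions on the generating tuple.

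The set of $d$-tuples sharing a common fixed point is closed by compactness of $S^1$, so its complement is open. The synchronization of the original walk, together with the absence of a common fixed point of $G$, moreover forces $G$ to preserve no probability measure on $S^1$: an invariant $\mu$ would yield a $(g\times g)$-invariant $\mu\otimes\mu$ on $S^1\times S^1$, yet synchronization would concentrate it on the diagonal, so $\mu$ would be a Dirac mass and thus a common fixed point. Absence of an invariant probability measure is likewise open by upper semi-continuity of the invariant-measure set (compactness of $\mathcal{P}(S^1)$). Hence, shrinking $\mathcal{V}$, $\tilde{G}$ has neither a common fixed point nor an invariant probability measure, and Theorem~\ref{main} applies to every non degenerated random walk on $\tilde{G}$: for each $z\in S^1$ and $\mathbb{P}$-almost every $\tilde{\omega}$, a neighborhood $\tilde{J}_z$ of $z$ is contracted to a point exponentially by $(\tilde{f}_{\tilde{\omega}}^n)_n$, so $\rho_{\tilde{G}}=0$ on $\tilde{J}_z\times\tilde{J}_z$. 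By finite-subcover compactness, there exists $\tilde{\ell}>0$ with $\rho_{\tilde{G}}(x,y)=0$ whenever $\mathrm{dist}(x,y)<\tilde{\ell}$.

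To handle the remaining pairs I would invoke $(iii)$ for $G$: by compactness of $\{(x,y):\mathrm{dist}(x,y)\geq\tilde{\ell}\}$ and continuity, there are finitely many words $g_1,\ldots,g_N\in G$ and an open cover $\{V_j\}$ such that $\mathrm{dist}(g_j(x),g_j(y))<\tilde{\ell}/2$ on $V_j$. Shrinking $\mathcal{V}$ so that the perturbed words $\tilde{g}_j$ (built from $\tilde{f}_i$ with the same index sequence) remain $C^0$-close to the $g_j$ yields $\mathrm{dist}(\tilde{g}_j(x),\tilde{g}_j(y))<\tilde{\ell}$ on $V_j$, so any far pair is first brought into the local synchronization regime and then driven to zero. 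The main difficulty is the circular dependence between $\tilde{\ell}$ (a function of $\tilde{G}$, hence of $\mathcal{V}$) and the smallness of $\mathcal{V}$ required in the last step (a function of $\tilde{\ell}$). The remedy is to fix upfront a finite local-contraction cover $\{I_k,g_k\}$ of $G$ furnished by Theorem~\ref{main} with Lebesgue number $\ell_0>0$ and one-step contracted diameter below some $\eta\ll\ell_0$, and to verify that for $\mathcal{V}$ small the perturbed $\tilde{g}_k$ push any pair at distance less than $\ell_0$ into a region where $\tilde{G}$'s own local contraction from Theorem~\ref{main} takes over, yielding a uniform bound $\tilde{\ell}\geq\ell_0/2$ valid throughout $\mathcal{V}$ and closing the loop.
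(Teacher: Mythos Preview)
Your overall plan matches the paper's: reduce to verifying condition~(iii) of Theorem~\ref{synchronizing} for the perturbed semigroup $\tilde{G}$, and use finitely many words from $G$ together with $C^0$-stability. You also correctly isolate the main difficulty, the circular dependence between the threshold $\tilde{\ell}=\tilde{\ell}(\tilde{G})$ and the size of $\mathcal{V}$.

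The proposed remedy, however, does not close the loop. A finite ``local-contraction cover'' $\{(I_k,g_k)\}$ with $\mathrm{diam}(g_k(I_k))<\eta$ only gives, after perturbation, $\mathrm{diam}(\tilde{g}_k(I_k))<2\eta$. Composing such maps does not shrink diameters further: each application brings a pair to distance $<2\eta$, not to a fixed fraction of its previous distance, so no iteration of the $\tilde{g}_k$ forces diameters to $0$. To conclude $\rho_{\tilde{G}}=0$ on pairs at distance $<\ell_0$ you would still need $2\eta<\tilde{\ell}$, and since $\tilde{\ell}$ depends on $\tilde{G}$ while $\eta$ must be fixed before choosing $\mathcal{V}$, the circularity is exactly where it was. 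The asserted uniform bound $\tilde{\ell}\ge\ell_0/2$ is therefore not justified.

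The paper avoids this by producing a \emph{robust iterable contractor}: a single word $g\in G$ and an interval $I_0$ with $\overline{g(I_0)}\subset I_0$ and $g(x)\neq x$ for some $x\in K$ (obtained from Theorem~\ref{main} together with recurrence of the orbit to $K$). The inclusion $\overline{g(I_0)}\subset I_0$ is $C^0$-open, so $\tilde{g}$ still has an attracting fixed point in $I_0$, and on any interval $I$ (meeting $K$) where $g$ has no fixed point, the iterates $\tilde{g}^n$ converge to a constant. Finitely many $h_i\in G$ then bring every pair into $I$, and this too is $C^0$-stable. The essential idea you are missing is precisely this: replace a finite collection of one-shot contractors by one element whose \emph{iterates} contract an interval, a condition that survives perturbation because it is encoded by the open relation $\overline{g(I_0)}\subset I_0$.
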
 
It is natural to ask whether the property of synchronization is generic, but it easy to see that it is not the case: if $I$ is an open interval, the property 
$$\forall k\in \{1,\ldots,d\},\, \overline{f_k(I)}\subset I$$
is robust, and the existence of two disjoints such intervals is an obstruction to the synchronization. However, in the case of a non degenerated random walk on subgroups of $\mathrm{Homeo}_+(S^1)$, Antonov's Theorem holds (see \cite{Deroin-inter}), and hence in this case, the property of synchronization is generic, because the other alternatives (existence of a common invariant probability measure or existence of a non trivial homeomorphism in the centralizer of the group) are degenerated properties. Combining this remark with Corollary \ref{robust}, we obtain the following conclusion:

\begin{cor}\label{generic}
Let $d$ be an integer larger than $1$. Then there exists an open dense subset $\mathcal{U}$ of $\mathrm{Homeo}_+(S^1)^d$ such that for every $(f_1,\ldots,f_d)$ in $\mathcal{U}$, any non degenerated random walk on the group generated by $\{f_1,\ldots,f_d\}$ is exponentially synchronizing.
\end{cor}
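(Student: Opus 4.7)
The plan is to define $\mathcal{U}$ as the set of $d$-tuples $(f_1,\ldots,f_d) \in \mathrm{Homeo}_+(S^1)^d$ such that the $f_i$ have no common fixed point and every non-degenerate random walk on the group $\langle f_1,\ldots,f_d\rangle$ is synchronizing. By Theorem \ref{synchronizing}, such a walk is then automatically exponentially synchronizing, so it suffices to prove that $\mathcal{U}$ is open and dense.

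For openness, the set of $d$-tuples admitting a common fixed point is closed in $\mathrm{Homeo}_+(S^1)^d$ (if $x_n$ is a common fixed point of $(f_1^n,\ldots,f_d^n)$ and $(f_i^n)\to (f_i)$, any accumulation point of $(x_n)$ is a common fixed point of the limit tuple); and on its open complement Corollary \ref{robust} provides a synchronizing neighbourhood of every synchronizing tuple.

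For density, fix $(f_1,\ldots,f_d)$ and $\varepsilon>0$. An arbitrarily small first perturbation removes any common fixed point, which is possible because $d\geq 2$: perturb $f_1$ locally near each common fixed point. On the resulting tuple we invoke Antonov's theorem in the group case (\cite{Deroin-inter}), which says that $\langle f_1,\ldots,f_d\rangle$ falls into one of three alternatives: synchronization, existence of a common invariant probability measure on $S^1$, or existence of a non-trivial finite-order element of $\mathrm{Homeo}_+(S^1)$ in its centralizer. In the first case we already lie in $\mathcal{U}$, so it remains to argue that the other two alternatives can be destroyed by a further arbitrarily small perturbation, which in combination with the openness step concludes density.

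The main obstacle is this last step, since the putative invariant measure $\mu$ or centralizer element $\theta$ a priori depends on the tuple. For the invariant-measure alternative, $\mu$ either has full support (in which case the group is semi-conjugate to a subgroup of rotations, and any $C^0$-small non-isometric perturbation of a single generator ruins $f_{i*}\mu=\mu$) or is supported on the unique minimal invariant compact, which is then Cantor or finite; in both cases a small local perturbation of one generator moving a support point to a non-support point breaks invariance. For the centralizer alternative, any non-trivial finite-order $\theta$ has no fixed point on $S^1$, so for any sufficiently small arc $J$ the arcs $J$ and $\theta(J)$ are disjoint; a perturbation of one $f_i$ supported in $J$ then destroys $f_i\theta=\theta f_i$ without re-creating a common fixed point. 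After these perturbations the tuple lies in the synchronizing alternative of Antonov's theorem, hence in $\mathcal{U}$.
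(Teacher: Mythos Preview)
Your overall strategy matches the paper's: openness from Corollary~\ref{robust}, density from the group version of Antonov's trichotomy together with the claim that the two non-synchronizing alternatives are ``degenerate''. The paper itself does not spell out the degeneracy argument beyond that one sentence, so your attempt actually goes further than the paper in trying to justify it.

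There is, however, a genuine gap in your density step. You destroy a \emph{specific} invariant measure $\mu$ or a \emph{specific} finite-order centralizer element $\theta$ and then assert that ``after these perturbations the tuple lies in the synchronizing alternative of Antonov's theorem''. This is not justified: after perturbing away $\mu$, the new tuple might admit a \emph{different} invariant measure, or might now fall into the centralizer alternative; and similarly after perturbing away $\theta$. For instance, if one of the unperturbed generators is a rational rotation, it has an infinite-dimensional set of invariant measures, and killing $(f_i)_*\mu=\mu$ for one particular $\mu$ says nothing about the others. What is actually needed is that the set of tuples admitting \emph{some} invariant probability measure is nowhere dense (it is closed by weak-$*$ compactness; empty interior requires an argument, for example first perturbing one generator to be Morse--Smale so that its invariant measures form a finite-dimensional simplex supported on finitely many periodic orbits, and then perturbing a second generator so as not to preserve that finite set), and similarly that for each $p\ge 2$ the set of tuples with an order-$p$ centralizer element is nowhere dense. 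Your local perturbation arguments are ingredients toward this, but they do not by themselves give the conclusion you state.

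In short: the architecture is exactly the paper's, the openness half is fine, but the density half claims more than the given perturbations establish.
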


\subsection{Random dynamical systems on $[0,1$]}
We conclude by the study of the iterations of continuous injective transformations of an interval. For exemple, we can apply our results to obtain:

\begin{cor}\label{example}
	Let $\omega\mapsto (f_\omega^n)_{n\in\N}$ be a non degenerated random walk on a semigroup $G$ of injective continuous functions from $[0,1]$ into itself, and let us assume that
	$$\bigcap_{g\in G} g([0,1])=\emptyset.$$
	Then there exists $q<1$ such that for $\mathbb{P}$-almost every $\omega$:
	$$\forall n\in \N,\, \mathrm{diam}(f_\omega^n([0,1]))\leq C q^n$$
	for some constant $C=C(\omega)$.
\end{cor}
The assumption $\bigcap_{g\in G} g([0,1])=\emptyset$ is weak (and is actually equivalent to the conclusion if $G$ does not fix any point of $I$): for example, if you iterate randomly two continuous injective functions $f_1,f_2:I\rightarrow I$ such that $f_1$ has only one fixed point $c$, and $f_2(c)\not=c$, then the corollary applies, that is to say that random compositions of $f_1$ and $f_2$ almost surely contract the whole interval $[0,1]$ exponentially fast.\\
\begin{rem}
	It is actually possible to prove Corollary \ref{example} by a straight elementary proof, using the ideas of \cite{Yuri}.
\end{rem}

The previous corollary does not apply if we iterate homeomorphisms of the interval. The techniques of the paper does not seem to be sufficient to treat such a random walk in a general exhaustive way. However we can still adapt our techniques to get some partial information. Here is a variation of our main theorem in this context:

\begin{cor}\label{mainR}
	Let $\omega\mapsto (f_\omega^n)_{n\geq 0}$ be a non degenerated random walk on a sub-semigroup $G$ of $\mathrm{Homeo}([0,1])$, such that:
	\begin{itemize} 
	\item there does not exists a non trivial subinterval of $(0,1)$ invariant by $G$.
	\item there exists at least one probability measure $\mu$ on $(0,1)$ which is stationary for the random walk.
	\end{itemize}
	 Then, for every $x$ in $\R$ there exists a neighbourhood $I$ of $x$ such that
	$$\forall n\in \N,\, \mathrm{diam}(f_\omega^n(I))\leq q^n,$$
	where $q<1$ does not depend on $x$.
\end{cor}

\begin{rem}
	From the proof of this corollary one can notice that if the second assumption is satisfied but not the first one, then the conclusion of the statement still holds if we restrict $x$ to belong to the invariant interval $I=(\inf (\mathrm{supp}(\mu)),\sup (\mathrm{supp}(\mu)))$
\end{rem}

This theorem gives the phenomenon of local contractions under the existence of a stationary probability measure. With some additional work, one can hope to deduce various dynamical properties from it as we do in this paper in the case of the circle. As an example, let us state the following corollary, answering by the affirmative to a question of B. Deroin in \cite{Deroin-cours}: ``If $f,g$ are increasing diffeomorphisms of $[0,1]$, and if the Lebesgue measure is stationary (for $\nu=\frac{\delta_f+\delta_g}{2}$), is it necessarily the only stationary probability measure without atoms?''\\

\begin{cor}\label{torchder}
If a random walk on $\mathrm{Homeo}_+([0,1])$ admits a stationary probability measure on $(0,1)$ with total support, then it is the only one. In particular, any random walk on $\mathrm{Homeo}_+([0,1])$ acting minimally on $(0,1)$ admits at most one stationary probability measure on $(0,1)$.
\end{cor}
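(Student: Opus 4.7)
\medskip

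\noindent\textbf{Proof plan for Corollary \ref{torchder}.}

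The strategy is to apply Theorem \ref{mainR} to the hypothesized non-atomic full-support stationary measure $\mu_1$, combine the resulting exponential contractions with Birkhoff's ergodic theorem to identify the asymptotic empirical distribution of trajectories from every $x \in (0,1)$ with $\mu_1$, and then integrate against a competing non-atomic stationary $\mu_2$ to force $\mu_2 = \mu_1$. I first reduce to the ergodic case for both $\mu_1, \mu_2$ via the ergodic decomposition, which preserves non-atomicity because ergodic atomic stationary measures are supported on finite invariant orbits.

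Applied to $\mu_1$ (with $\mathrm{supp}(\mu_1)=[0,1]$), Theorem \ref{mainR} yields, for every $x\in(0,1)$ and $\mathbb{P}$-a.e.\ $\omega$, a neighborhood $I_{\omega, x}$ of $x$ with $\mathrm{diam}(f_\omega^n(I_{\omega, x})) \leq q^n$. Birkhoff's theorem for the ergodic invariant measure $\mathbb{P}\otimes\mu_1$ of the skew-product $\hat{T}$ then gives that, for $\mathbb{P}$-a.e.\ $\omega$,
$$Y_\omega := \Bigl\{\, y \in [0,1] : \tfrac{1}{N}\sum_{n=0}^{N-1}\delta_{f_\omega^n(y)} \xrightarrow[N\to +\infty]{} \mu_1 \text{ weakly}\,\Bigr\}$$
is $\mu_1$-conull, hence dense in $[0,1]$ by the full support. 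For any such $\omega$ and any $x\in(0,1)$, picking $y \in I_{\omega, x}\cap Y_\omega$ (nonempty by openness and density), the bound $|f_\omega^n(x)-f_\omega^n(y)| \leq q^n$ together with the uniform continuity of any $\varphi \in C([0,1])$ transports the Birkhoff convergence from $y$ to $x$. Hence for every $x \in (0,1)$ and $\mathbb{P}$-a.e.\ $\omega$, $\tfrac{1}{N}\sum_{n<N}\delta_{f_\omega^n(x)} \to \mu_1$ weakly.

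Using $\mu_2((0,1))=1$, the stationarity of $\mu_2$, and dominated convergence,
$$\int \varphi\,d\mu_2 = \lim_{N\to\infty} \int \frac{1}{N}\sum_{n=0}^{N-1}\varphi(f_\omega^n(x))\,d\mathbb{P}(\omega)\,d\mu_2(x) = \int \varphi\,d\mu_1,$$
for every $\varphi \in C([0,1])$, giving $\mu_2 = \mu_1$. The main obstacle is the ergodicity reduction for $\mu_1$: if $\mu_1$ decomposed into several ergodic stationary components (necessarily non-atomic, with supports jointly covering $[0,1]$), Birkhoff would only identify the local ergodic component rather than $\mu_1$ itself, and the density argument above would fail. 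Handling this requires applying Theorem \ref{mainR} to each ergodic component on the interior of its support and using the resulting contractions, together with connectedness of $[0,1]$, to force all such components to coincide.
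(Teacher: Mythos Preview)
Your main line is sound and close to the paper's: both apply Theorem~\ref{mainR} to $\mu_1$ to obtain a contractible neighbourhood of every $x\in(0,1)$, and then use a Birkhoff-plus-contraction argument. The paper packages the latter as Lemma~\ref{contractible} (a contractible ball is charged by at most one ergodic stationary measure), combines it with full support of $\mu_1$ (so some ergodic component of $\mu_1$ charges each such ball) and connectedness of $(0,1)$ to produce a single ergodic stationary $\mu_*$ charging $(0,1)$. This yields $\mu_1=\mu_*$ (so $\mu_1$ is automatically ergodic) and, for any non-atomic $\mu_2$, likewise $\mu_2=\mu_*$ via its ergodic decomposition --- making your steps 3--6 redundant once the local uniqueness is in place.

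The genuine gap is in your ergodicity reduction. The claim that the ergodic decomposition of a non-atomic $\mu_1$ ``preserves non-atomicity'' is unjustified and false in general: a non-atomic stationary measure can decompose entirely into Dirac masses (e.g.\ Lebesgue under the trivial action; in $\mathrm{Homeo}_+([0,1])$ an atomic ergodic stationary measure is a $\delta_e$ at a common fixed point $e$, since any finite invariant set for increasing maps consists of fixed points, and a non-atomic $\mu_1$ may well put mass on a Cantor set of such $e$'s). Consequently your plan to re-apply Theorem~\ref{mainR} to each ergodic component ``on the interior of its support'' may fail: atomic components violate its non-atomicity hypothesis, and non-atomic ones may have support with empty interior. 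The correct fix is simpler and is what the paper does: do not decompose $\mu_1$ at all. The single application of Theorem~\ref{mainR} to $\mu_1$ already gives contractions at every point of $(0,1)$, and your own Birkhoff transport argument --- applied to generic points of any two ergodic stationary measures charging the same contractible ball --- shows those measures coincide (this is exactly Lemma~\ref{contractible}). Connectedness of $(0,1)$ then finishes as above.
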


 The existence of a stationary probability measure for a random walk on $\mathrm{Homeo}([0,1])$ (other that convex combinations of $\delta_0$ and $\delta_1$ ) can be ensured if the extremities $0$ and $1$ ´´repulse'' the dynamics of the random walk. One can check for exemple that a stationary probability measure exists if the random walk is generated by a probability $\nu$ on $Diff^1_+([0,1])$ whose finite support, such that $\int \log |f'(c)|d\nu(f)>0$ for $c=1, 2$.
 
 However, without such an additional assumption, in general such a measure does not exists.
 For exemple, when the random walk is symmetric in the sense that the associated probability measure $\nu$ is invariant under the transformation $g\mapsto g^{-1}$, it is proved in \cite{Deroin-homeo} that there is no stationary probability measure. Thus, Corollary \ref{mainR} does not apply in this case. But it is interessant to notice that \cite{Deroin-homeo} develops techniques to obtain a good understanding of the random walk in this particular case where ours methods do not apply. In consequence one could hope that by adapting these techniques and those of this paper it would be possible to manage the study of a general random walk on $\mathrm{Homeo}([0,1])$.
\subsection{Scheme of the paper}
The paper is organized as follows: 
\begin{itemize}
	
\item in Section \ref{sec-inv} we present the core argument of our results: an invariance principle for a general skew-product $\hat{T}$ on a space $\Omega\times S^1$ stating that either there is a phenomenon of contractions in the dynamics of $\hat{T}$ on the fibres, either ``there is something invariant''. Applying the principle to the specific case where $\hat{T}$ is associated to a random walk, we obtain Theorem \ref{main}, and one can hope that it can also be used in non independant contexts. 

\item In Section \ref{sec-consequences}, we state various ergodic properties of the random dynamical systems on compact metric sapces satisfying the property of local contractions. (This section can be read indepedently of the others)

\item In Section \ref{sec-proofs}, we deduce the proofs of the other theorems stated in the introduction by combining the results of Sections \ref{sec-inv} and \ref{sec-consequences}.

\end{itemize}
\section{An invariance principle}\label{sec-inv}
The objective of this part is to prove an invariance principle in the spirit of the works of Ledrappier \cite{Ledrappier}, Crauel \cite{Crauel} and Ávila-Viana \cite{Avila} for one-dimensional cocycles without regularity (except the continuity).\\

Let $(\Omega,\mathcal{F},\mathbb{P})$ be a probability space and $T:\Omega \rightarrow \Omega$ be a $\mathbb{P}$-invariant transformation. We look at the skew products on $\Omega\times S^1 $ extending $T$, that is the measurable transformations $\hat{T}$ of the form $(\omega,x)\mapsto (T\omega, f_\omega(x))$, where $f_\omega\in \mathrm{Homeo}(S^1)$. For $\omega$ in $\Omega$, we will use the notation
$$f_\omega^n=f_{T^{n-1}\omega}\circ\cdots\circ f_\omega,$$
so that the iterates of $\hat{T}$ are given by $\hat{T}^n(\omega,x)=(T^n\omega,f_\omega^n(x))$.

\subsection{Lyapunov exponent and exponent of contraction}
Let us recall the definition of the \emph{Lyapunov exponents} of $\hat{T}$ when $f_\omega$ is smooth:
\begin{Def}
If $f_\omega\in \mbox{Diff}(S^1)$, then the
{Lyapunov exponent} of $\hat{T}$ at a point $(\omega,x)\in\Omega\times S^1$ is defined as $$\lambda(\omega,x)=\lim_{n\to+\infty}\frac{\log |(f_\omega^n)'(x)|}{n},$$
if the limit exists. If $\hat{\mu}$ is a $\hat{T}$-invariant probability measure such that $(\omega,x)\mapsto \log |f_\omega'(x)|$ is $\hat{\mu}$-integrable, then the Lyapunov exponent is well defined $\hat{\mu}$-almost everywhere, constant if $\hat{\mu}$ is ergodic, and the Lyapunov exponent of $\hat{\mu}$ is defined as
$$\lambda(\hat{\mu})=\int_{\Omega\times S^1} \lambda(\omega,x)d\hat{\mu}(\omega,x).$$	
\end{Def}

The Lyapunov exponent $\lambda(\omega,x)$ measures the exponential rate of contraction of $(f_\omega^n)$ at the neighbourhood of $x$. In order to have analogue informations without assuming that $f_\omega\notin \mbox{Diff}^1(S^1)$, we define the following \emph{exponent of contraction}:
\begin{Def}
The exponent of contraction of $\hat{T}$  at the point $(\omega,x)$ is the non positive quantity
$$\lambda_{con}(\omega,x)=\varlimsup_{y\to x}\varlimsup_{n\to +\infty}\frac{\log(\mathrm{dist}(f_\omega^n(x),f_\omega^n(y))}{n}.$$ 
If $\hat{\mu}$ is a $\hat{T}$-invariant probability measure, the exponent of contraction of $\hat{\mu}$ is defined as 
$$\lambda_{con}(\hat{\mu})=\int_{\Omega\times S^1} \lambda_{con}(\omega,x)d\hat{\mu}(\omega,x).$$
\end{Def}
Note that $\lambda_{con}$ is $\hat{T}$ is $\hat{T}$-invariant, so that $\lambda_{con}$ is constant $\hat{\mu}$-almost everywhere if $\hat{\mu}$ is ergodic.

The exponent of contraction has the advantage over Lyapunov exponents that it does not need any assumption of differentiability. As a counterpart, the information provided by this exponent is slightly less precise than the one provided by the Lyapunov exponents, because it only measures the contraction of the cocycle, not the expansion, and actually  the maximal contraction only, so that one cannot hope miming an Oselede\v{c}/Pesin's theory with this naive definition in dimension larger than one. In dimension one, though, this exponent of contraction is a perfect tool to generalize the notion of Lyapunov exponent. We have indeed in this case a simple relation between Lyapunov exponent and exponent of contraction:
\begin{prop}\label{expoegal}
Let $(\Omega,\mathcal{F},\mathbb{P})$ be a probability space and let $\hat{T}: (\omega,x)\mapsto (T\omega, f_\omega(x))$ be a measurable transformation of $\Omega \times S^1$ with $f_\omega\in \mathrm{Diff^1}(S^1)$, such that the function $(\omega,x)\mapsto \log |f_\omega'(x)|$ is bounded. Then, for every $\hat{T}$-invariant ergodic probability measure $\hat{\mu}$, we have
$$\lambda_{con}(\hat{\mu})=\inf(\lambda(\hat{\mu}),0),$$
where $\lambda(\hat{\mu})$ is the Lyapunov exponent associated to $\hat{\mu}$:
$$\lambda(\hat{\mu})=\int_{\Omega\times S^1}\log |f_\omega '(x)|d\hat{\mu}(\omega,x).$$
\end{prop}
\begin{proof}
The inequality $\lambda_{con}(\hat{\mu})\leq 0$ is trivial. And as noticed in \cite{Crauel} Proposition 2.6, one can adapt the techniques of Pesin on stable manifolds, to obtain the inequality $\lambda_{con}(\hat{\mu})\leq \lambda(\hat{\mu})$ (see also \cite{Lejan} for a proof in the particular case of independent compositions of diffeomorphisms). So from now on, we focus on proving the converse inequality $\lambda_{con}(\hat{\mu})\geq \inf(\lambda(\hat{\mu}),0).$\\

We assume that $\lambda_{con}(\hat{\mu})<0$. Let $(\omega,x)$ be a point of $\Omega\times S^1$ such that 
$$\lambda(\hat{\mu})=\lambda(\omega,x)=\lim_{n\to+\infty}\frac{\log |(f_\omega^n)'(x)|}{n}$$ 
and 
 $$\lambda_{con}(\hat{\mu})=\lambda_{con}(\omega,x)=\varlimsup_{y\to x}\varlimsup_{n\to +\infty}\frac{\log(\mathrm{dist}(f_\omega^n(x),f_\omega^n(y))}{n},$$
 and let $\ep>0$. If $y$ is close enough to $x$, then we have
$$\forall n\in \N, \mbox{dist}(f_\omega^n(x),f_\omega^n(y))\leq \ep.$$
Then, denoting by $\alpha_\omega(\cdot)$ the modulus of continuity of $\log |f_\omega'|$, we have for any $z_1$, $z_2$ in $[x,y]$,
$$\log |(f_\omega^n)'(z_1)|-\log|(f_\omega^n)'(z_2)|= \sum_{k=0}^{n-1}\log |f_{T^k\omega}'(f_\omega^k(z_1))|-\log |f_{T^k\omega}'(f_\omega^k(z_2))|\leq \sum_{k=0}^{n-1}\alpha_{T^k\omega}(\ep).$$
In particular, by the mean value equality,
$$\frac{\log |(f_\omega^n)'(x)|}{n}\leq \frac{1}{n}\log\left(\frac{\mbox{dist}(f_\omega^n(x),f_\omega^n(y))}{\mbox{dist}(x,y)}\right)+
\frac{1}{n}\sum_{k=0}^{n-1}\alpha_{T^k\omega}(\ep).$$

If $\omega$ is a Birkhoff point of $\alpha_\cdot(\ep)$, we deduce by letting $n$ tend to $+\infty$ and $y$ to $x$ that 
$$\lambda(\hat{\mu})\leq \lambda_{con}(\hat{\mu})+\int_{\Omega} \alpha_{\omega '}(\ep) d\mathbb{P}(\omega ')$$
Since $\alpha_{\omega '}(\ep)$ tends to $0$ as $\ep\to 0$ and is uniformly bounded, by dominated convergence we obtain that $\lambda(\hat{\mu})\leq \lambda_{con}(\hat{\mu})$.
\end{proof}

\subsection{The invariance principle statement}
 Let us state the main theorem of the section.
\begin{thm}[Invariance principle]\label{invariance}~

Let $(\Omega,\mathcal{F},\mathbb{P})$ be a standard Borel space, with $\mathbb{P}$ a probability measure, and let $\hat{T}: (\omega,x)\mapsto (T\omega, f_\omega(x))$ be  a measurable transformation of $\Omega \times S^1$ with $f_\omega\in \mathrm{Homeo}(S^1)$. Then, for every $\hat{T}$-invariant probability measure $\hat{\mu}$ of the form $d\hat{\mu}(\omega,x)=d\mu_\omega(x)d\mathbb{P}(\omega)$, we have the following alternative:
\begin{itemize}
\item either $\lambda_{con}(\hat{\mu})<0$  (contraction),
\item or for $\mathbb{P}$-almost every $\omega$, $\mu_{T\omega}=(f_\omega)_*\mu_{\omega}$ (invariance).
\end{itemize}
\end{thm}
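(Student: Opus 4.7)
The plan is to adapt the Ledrappier/\'Avila--Viana invariance principle to the $C^0$ setting, substituting the exponent of contraction $\lambda_{con}$ for the Lyapunov exponent throughout. First, I would pass to the natural extension $(\bar\Omega,\bar{\mathbb{P}},\bar T)$ of $(\Omega,\mathbb{P},T)$, so that $\bar T$ becomes invertible; the skew-product lifts to $\hat{\bar T}(\bar\omega,x)=(\bar T\bar\omega,f_{\omega_0}(x))$ and $\hat\mu$ to the unique $\hat{\bar T}$-invariant $\hat{\bar\mu}$ projecting onto $\hat\mu$, with fibered disintegration $\{\bar\mu_{\bar\omega}\}$ over $\bar{\mathbb{P}}$. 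Since $\bar T$ is now invertible, $\hat{\bar T}$-invariance of $\hat{\bar\mu}$ immediately yields the pointwise equivariance $(f_{\omega_0})_*\bar\mu_{\bar\omega}=\bar\mu_{\bar T\bar\omega}$ $\bar{\mathbb{P}}$-a.s. The original disintegration is recovered as $\mu_\omega=E[\bar\mu_{\bar\omega}\mid\omega_0=\omega]$, and a short verification using uniqueness of the lift shows the second alternative of the theorem to be equivalent to $\bar\omega\mapsto\bar\mu_{\bar\omega}$ being $\sigma(\omega_0)$-measurable.

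Iterating the equivariance yields $\bar\mu_{\bar\omega}=(f_{\omega_{-n}}^n)_*\bar\mu_{\bar T^{-n}\bar\omega}$, so for any continuous $\phi:S^1\to\R$ I would consider the $L^2$-bounded martingale
\[
M_n^\phi(\bar\omega):=E\!\left[\int_{S^1}\phi\,d\bar\mu_\cdot\,\Big|\,\sigma(\omega_0,\dots,\omega_{-n})\right].
\]
Pulling out the $\sigma(\omega_0,\dots,\omega_{-n})$-measurable factor $f_{\omega_{-n}}^n$ and using $E[\bar\mu_{\bar T^{-n}\bar\omega}\mid\omega_{-n}]=\mu_{\omega_{-n}}$, one computes $M_n^\phi(\bar\omega)=\int\phi\circ f_{\omega_{-n}}^n\,d\mu_{\omega_{-n}}$. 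Martingale convergence gives $M_n^\phi\to\int\phi\,d\bar\mu_{\bar\omega}$ in $L^2$, and the second alternative is equivalent to $M_n^\phi=M_0^\phi$ for every continuous $\phi$ and every $n\geq 0$. Decomposing into martingale increments via $f_\omega^n=f_{T\omega}^{n-1}\circ f_\omega$ and setting $D_\omega:=(f_\omega)_*\mu_\omega-\mu_{T\omega}$,
\[
M_n^\phi(\bar\omega)-M_{n-1}^\phi(\bar\omega)=\int\phi\circ f_{T\omega_{-n}}^{n-1}\,dD_{\omega_{-n}},
\]
and orthogonality of increments yields
\[
\sum_{n\geq 1}\int_\Omega\!\Bigl(\int_{S^1}\phi\circ f_{T\omega}^{n-1}\,dD_\omega\Bigr)^{\!2}d\mathbb{P}(\omega)\leq\|\phi\|_\infty^2<\infty,
\]
so each term tends to $0$ as $n\to\infty$.

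The decisive step is to convert this vanishing into $D_\omega=0$ $\mathbb{P}$-a.s.\ under the hypothesis $\lambda_{con}(\hat\mu)=0$. Since $\lambda_{con}\leq 0$ always, this hypothesis forces $\lambda_{con}(\omega,x)=0$ at $\hat\mu$-a.e.\ $(\omega,x)$, meaning the iterates $f_\omega^n$ do not contract exponentially near $\mu_\omega$-typical points. My plan is then to apply Helly's selection theorem to the orientation-preserving iterates $(f_{T\omega}^n)$ (reducing to the orientation-preserving case by passing to $\hat T^2$ if needed) to extract, for $\mathbb{P}$-a.e.\ $\omega$, a subsequence converging pointwise to some monotone $g_\omega$, and then use a Borel--Cantelli argument exploiting the stationarity of $(\omega_{-n})$ together with the quantitative form of $\lambda_{con}=0$ to rule out the collapse of any interval meeting $\mathrm{supp}(\mu_\omega)$ under $g_\omega$. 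Consequently $\psi\mapsto\psi\circ g_\omega$ ranges over a family dense in $C(S^1,\R)$ as $\psi$ varies, and passing to the limit in the vanishing above produces $\int\psi\,dD_\omega=0$ for enough $\psi$ to force $D_\omega=0$.

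Everything before the last step is formal manipulation of disintegrations and martingales in the spirit of \cite{Ledrappier,Avila}; the hard part will be this last step, where the absence of Pesin theory in the $C^0$ setting must be compensated by a Helly/Borel--Cantelli argument exploiting the one-dimensionality of $S^1$ together with the precise definition of $\lambda_{con}$ to extract nontrivial limit homeomorphisms from stationary iterates that do not contract exponentially.
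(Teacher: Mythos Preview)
Your martingale setup (natural extension, equivariance of $\bar\mu_{\bar\omega}$, the computation of the increments in terms of $D_\omega$, and the $L^2$ bound $\sum_n\|\int\phi\circ f_{T\omega}^{n-1}\,dD_\omega\|_{L^2(\mathbb{P})}^2<\infty$) is fine and is indeed in the spirit of Ledrappier's original argument. The problem is the ``decisive step'', which as written does not go through.

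The hypothesis $\lambda_{con}(\hat\mu)=0$ only says that for $\hat\mu$-a.e.\ $(\omega,x)$ the iterates $f_\omega^n$ do not contract a neighbourhood of $x$ \emph{exponentially}; it says nothing about subexponential contraction. A single homeomorphism with a parabolic attracting fixed point already gives $\lambda_{con}\equiv 0$ while $f^n$ collapses every arc to a point. Thus Helly limits of $f_{T\omega}^{n}$ can perfectly well be constants or step maps even under $\lambda_{con}=0$, and the family $\{\psi\circ g_\omega:\psi\in C(S^1)\}$ need not separate measures. No Borel--Cantelli or stationarity argument on $(\omega_{-n})$ will rescue this, because the obstruction is already present in the deterministic (single-map) case. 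There is a second, smaller issue: $D_\omega$ is supported in $\mathrm{supp}((f_\omega)_*\mu_\omega)\cup\mathrm{supp}(\mu_{T\omega})$, and your non-collapse claim is phrased for intervals meeting $\mathrm{supp}(\mu_\omega)$; you would in any case need control on the fiber over $T\omega$, not $\omega$.

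The paper avoids this difficulty entirely by working with a fibred Jacobian and entropy rather than with limit maps. It sets $J(\omega,x)=\dfrac{d(f_\omega^{-1})_*\mu_{T\omega}}{d\mu_\omega}(x)$ and $h(\hat\mu)=-\int\log J\,d\hat\mu$, then proves two inequalities: (i) $h(\hat\mu)\ge 0$ with equality precisely when $(f_\omega)_*\mu_\omega=\mu_{T\omega}$ a.s.\ (a one-line Jensen argument), and (ii) $\lambda_{con}(\hat\mu)\le -h(\hat\mu)$. Inequality (ii) is obtained by introducing an $\varepsilon$-approximate Jacobian $J_\varepsilon(\omega,x)=\sup\{\mu_{T\omega}(f_\omega(I))/\mu_\omega(I):x\in I,\ \mu_\omega(I)\le\varepsilon\}$, applying Birkhoff to $\log J_\varepsilon$ to get exponential decay of $\mu_{T^n\omega}(f_\omega^n(I))$, and then using a Besicovitch differentiation/maximal inequality to pass from $\mu_{T^n\omega}$-measure to Lebesgue diameter. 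The point is that this argument compares \emph{rates} directly and never needs to produce a nondegenerate limit of iterates, which is exactly what your approach requires and cannot obtain from $\lambda_{con}=0$ alone.
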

\begin{rem}
	In the case that $f_\omega$ is smooth, by Proposition \ref{expoegal} we otain the known fact that the Lyapunov exponent of $\hat{\mu}$ is negative unless maybe if we have the ''deterministic relation'' $\mu_{T\omega}=(f_\omega)_*\mu_{\omega}$. In particular the Lyapunov exponent of a stationary probability measure of a random walk on $\mbox{Diff}^1(S^1)$ is negative unless the stationary measure is actually invariant)
	\end{rem}

When the transformation $T$ of $\Omega$ is invertible, the relation $\mu_{T\omega}=(f_\omega)_*\mu_{\omega}$ is only a reformulation of ``$\hat{\mu}$ is $\hat{T}$-invariant'', so that the invariance principle as we stated it only gives information in non-invertible contexts (it is possible though to get an invariance principle in an invertible context, applying the theorem to a modified system, see \cite{Ledrappier}).\\

The following subsections 3.3, 3.4 and 3.5 are dedicated to the proof of Theorem \ref{invariance}. We will keep the notations of the statement in these subsections.
\subsection{Fibred Jacobian and fibred entropy}
Following the ideas of \cite{Ledrappier, Crauel, Avila}, we define the fibred entropy of $\hat{\mu}$ as follows:

\begin{Def}
The \emph{fibred Jacobian} $J=J(\hat{\mu}):\Omega\times S^1\to\R$ of $\hat{\mu}$ is defined by the expression
$$J(\omega,x)=\frac{d(f_\omega^{-1})_*\mu_{T\omega}}{d\mu_\omega}(x),$$
where the derivative is  taken in the Radon-Nikodym sense. The \emph{fibred entropy} $h(\hat{\mu})$ of $\hat{\mu}$ is defined as
$$h(\hat{\mu})=\begin{cases}
\displaystyle-\int_{\Omega\times S^1}\log J\, d\hat{\mu}&\text{if }\log J\in L^1(\Omega\times S^1,\hat{\mu}),\\

+\infty&\text{otherwise}.
\end{cases}$$
\end{Def}
By definition, the mapping $x\mapsto J(\omega,x)$ is the derivative of Radon-Nikodym of the measure $(f_\omega^{-1})_*\mu_{T\omega}$ against $\mu_\omega$, that is the $\mu_\omega$-integrable function such that we can write
\begin{equation}\label{Radon}d(f_\omega^{-1})_*\mu_{T\omega}(x)=J(\omega,x)\,d\mu_\omega(x)+d\tilde{\mu}_\omega(x)\end{equation}
where $\tilde{\mu}_\omega$ is singular with respect to $\mu_\omega$.

Let us state a classical general fact of geometric measure theory which allows to see a Radon-Nikodym derivative as, in some sense, a standard derivative: 
\begin{prop}\label{Besicovitch}
Let $\mu$ be a probability measure on $S^1$, and $\nu$ be any measure on $S^1$. Then:
\begin{enumerate}[i)]
\item For $\mu$-almost every $x$ in $S^1$, 
$$\frac{d\nu}{d\mu}(x)=\lim_{I\ni x,\, \mathrm{diam}(I)\to 0}\frac{\nu(I)}{\mu(I)}$$
(here and in the sequel, $I$ represents an interval of $S^1$).
\item  Denoting $q^*(x)=\sup_{I\ni x}\frac{\nu(I)}{\mu(I)}$,
$$\int_{S^1}\log^+ q^*(x)d\mu(x)\leq 2\nu(S^1).$$
\end{enumerate}
\end{prop}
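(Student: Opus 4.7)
The plan is to follow the classical one-dimensional proof of the Lebesgue--Besicovitch differentiation theorem. I start with the Lebesgue decomposition $d\nu = q\,d\mu + d\nu_s$, with $q=d\nu/d\mu \in L^1(\mu)$ and $\nu_s \perp \mu$. Both assertions reduce to a single weak-type maximal inequality: for every $\lambda > 0$,
$$\mu\{x \in S^1 : q^*(x) > \lambda\} \leq \frac{2\,\nu(S^1)}{\lambda}.$$

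To prove this inequality, fix $\lambda>0$ and set $E_\lambda=\{q^* > \lambda\}$. For each $x \in E_\lambda$ there is an open interval $I_x \ni x$ with $\nu(I_x) > \lambda\,\mu(I_x)$, and the same interval witnesses the condition at every one of its points, so $E_\lambda$ is open. The key covering step is that in dimension one the family $\{I_x\}_{x\in E_\lambda}$ admits a countable subfamily still covering $E_\lambda$ whose pointwise multiplicity is at most $2$ --- this is the Besicovitch covering theorem in dimension one, and on $S^1$ it is easily obtained by a bisection/rising-sun argument applied to each connected component of $E_\lambda$. Since $\mu(I_x) < \nu(I_x)/\lambda$ for each interval of the extracted subfamily and each point of $S^1$ is charged by at most two of them, $\mu(E_\lambda) \leq \sum_k \mu(I_k) < \lambda^{-1}\sum_k \nu(I_k) \leq 2\nu(S^1)/\lambda$. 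Then (ii) follows by the layer-cake formula combined with the substitution $\lambda = e^s$:
$$\int_{S^1}\log^+ q^*\,d\mu=\int_1^{+\infty}\mu\{q^* > \lambda\}\,\frac{d\lambda}{\lambda}\leq \int_1^{+\infty}\frac{2\nu(S^1)}{\lambda^2}\,d\lambda=2\nu(S^1).$$

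For (i), I split along the Lebesgue decomposition. For the singular part $\nu_s$: it is concentrated on a Borel set $A$ with $\mu(A)=0$; approximating $A$ from outside by open sets $U_\varepsilon \supset A$ with $\nu_s(U_\varepsilon \setminus A) < \varepsilon$ and applying the weak-type inequality to the restriction $\nu_s|_{U_\varepsilon}$, one obtains $\limsup_{I \ni x,\,\mathrm{diam}(I)\to 0} \nu_s(I)/\mu(I) = 0$ at $\mu$-a.e.\ point outside $A$. For the absolutely continuous part: approximate $q$ in $L^1(\mu)$ by continuous functions $\varphi_k$; at each point $x$ the limit for $\varphi_k\,d\mu$ equals $\varphi_k(x)$ by elementary continuity, while the contribution of the error $|q-\varphi_k|\,d\mu$ to the ratio $\nu(I)/\mu(I)$ is pointwise bounded by the corresponding maximal function, controlled $\mu$-a.e.\ via the weak-type inequality. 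A Borel--Cantelli argument along a sequence $\varphi_k \to q$ in $L^1(\mu)$ with $\|q-\varphi_k\|_{L^1(\mu)}$ decaying fast enough then gives the conclusion.

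The only genuinely delicate point is obtaining the sharp constant $2$ in the weak-type inequality, since naive Vitali-type arguments yield worse constants (typically $3$ or $5$). The constant $2$ comes out of the total ordering available on intervals in dimension one, either via the fact that the Besicovitch intersection constant equals $2$ in that dimension, or directly through a rising-sun lemma applied component by component on the open set $E_\lambda$. Everything else is routine measure-theoretic bookkeeping.
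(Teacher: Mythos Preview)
Your proposal is correct and follows exactly the route the paper indicates: the paper does not spell out a proof but simply records that the result is standard and that the classical Vitali argument goes through for an arbitrary reference measure $\mu$ once one replaces Vitali's lemma by the Besicovitch covering lemma (citing Ledrappier). Your write-up is precisely this standard argument, with the one-dimensional Besicovitch constant $2$ yielding the weak-type bound $\mu\{q^*>\lambda\}\le 2\nu(S^1)/\lambda$, from which both (i) and (ii) follow as you describe.
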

This proposition is standard if $\nu$ is the Lebesgue measure, as a consequence of Vitali's covering Lemma, and as noticed in \cite{Ledrappier}, the proof adpapts for any measure $\nu$ if we use Besicovitch's covering Lemma instead of Vitali's.\\

The key of the proof of Theorem \ref{invariance} is to see the entropy $h(\hat{\mu})$ in two different ways. 
\begin{itemize}
\item \emph{Firstly},
one can see $h(\hat{\mu})$ as a quantity measuring in average how much $(f_\omega^{-1})_*\mu_{T\omega}$ differs from $\mu_\omega$, and obtain the following fact justifying that $h(\hat{\mu})$ deserves its appellation of entropy:
\begin{prop}\label{entropy-positive}
We have the inequality
$$h(\hat{\mu})\geq 0,$$
with equality if and only if for $\mathbb{P}$-almost every $\omega$, $\mu_{T\omega}=(f_\omega)_*\mu_{\omega}$.
\end{prop}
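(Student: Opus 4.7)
The plan is to prove the inequality fiberwise, using the fact that the Radon--Nikodym decomposition~(\ref{Radon}) compares two \emph{probability} measures on $S^1$, and then applying Jensen's inequality to the concave function $\log$ against $\mu_\omega$. Since both $\mu_\omega$ and $(f_\omega^{-1})_*\mu_{T\omega}$ are probability measures, integrating~(\ref{Radon}) gives
\[
\int_{S^1} J(\omega,x)\,d\mu_\omega(x) = 1 - \tilde{\mu}_\omega(S^1) \leq 1,
\]
with equality if and only if the singular part $\tilde{\mu}_\omega$ vanishes. This is the first key step and uses nothing beyond measure-theoretic bookkeeping.

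Next I would split into two cases. If $\log J \notin L^1(\hat{\mu})$, then $h(\hat{\mu}) = +\infty \geq 0$ by definition, and the equality case does not arise. Otherwise, $\log J$ is integrable, which forces $J(\omega, \cdot) > 0$ on a set of full $\mu_\omega$-measure for $\mathbb{P}$-a.e.\ $\omega$. On each such fiber, apply Jensen's inequality to $\log$ against the probability measure $\mu_\omega$:
\[
\int_{S^1} \log J(\omega,x)\, d\mu_\omega(x) \;\leq\; \log \int_{S^1} J(\omega,x)\,d\mu_\omega(x) \;\leq\; 0.
\]
Integrating over $\omega$ and negating yields $h(\hat{\mu}) \geq 0$. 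For the equality case, $h(\hat{\mu})=0$ forces both inequalities on the fiber to be equalities for $\mathbb{P}$-almost every $\omega$. Strict concavity of $\log$ implies that the first equality occurs iff $J(\omega,\cdot)$ is $\mu_\omega$-a.e.\ constant, while the second forces $\tilde{\mu}_\omega = 0$; together these give $J(\omega,\cdot)\equiv 1$ $\mu_\omega$-a.e.\ and $\tilde{\mu}_\omega=0$, hence $(f_\omega^{-1})_*\mu_{T\omega} = \mu_\omega$, equivalently $\mu_{T\omega}=(f_\omega)_*\mu_\omega$.

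The main technical obstacle is not the inequality itself, which once the right Jensen-argument is in place becomes routine, but rather ensuring that everything is measurable in $\omega$: the function $(\omega,x)\mapsto J(\omega,x)$, the scalar $\omega \mapsto \tilde{\mu}_\omega(S^1)$, and the disintegration of $(f_\omega^{-1})_*\mu_{T\omega}$ against $\mu_\omega$. This is where the standard Borel assumption on $\Omega$ is used, permitting a measurable selection in the Radon--Nikodym theorem (one can fix a jointly Borel version of $J$ via Proposition~\ref{Besicovitch}, taking $\lim\tfrac{\nu(I)}{\mu(I)}$ along a countable basis of intervals). Once this measurability is secured, Fubini's theorem legitimately interchanges the order of integration and the fiberwise conclusion integrates cleanly to the global one.
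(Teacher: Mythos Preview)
Your proof is correct and follows essentially the same approach as the paper: both use Jensen's inequality for $\log$ applied to $J$, together with the bound $\int J\,d\mu_\omega\leq 1$ coming from~(\ref{Radon}), and both deduce $J\equiv 1$ and $\tilde{\mu}_\omega=0$ in the equality case. The only cosmetic difference is that you apply Jensen fiberwise (against $\mu_\omega$) and then integrate in $\omega$, whereas the paper applies Jensen once globally against $\hat{\mu}$; your added remarks on the $L^1$ case-split and measurability are more careful than the paper's version but not a different argument.
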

\item  \emph{Secondly}, one can use Proposition \ref{Besicovitch} to see the Jacobian term $J(\omega,x)$ as a kind of derivative for some geometry: for $\hat{\mu}$-a.e. $(\omega,x)$ in $\Omega\times S^1$,
$$J(\omega,x)=\lim_{y\to x}\frac{\mu_{T\omega}([f_\omega(x),f_\omega(y)])}{\mu_\omega([x,y])}.$$
 It is then possible to think of $-h(\hat{\mu})$ as a kind of Lyapunov exponent, and obtain:
\begin{prop}\label{ineq-entropy}
We have the inequality
$$\lambda_{con}(\hat{\mu})\leq -h(\hat{\mu})$$
\end{prop}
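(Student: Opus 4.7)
My plan proceeds in three movements: I would introduce the iterated Jacobian cocycle, control it via Birkhoff's ergodic theorem, and then transfer the resulting measure-contraction statement into a Euclidean diameter contraction using Proposition~\ref{Besicovitch}.

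\emph{Step 1 (the Jacobian cocycle and Birkhoff).} I would define the multiplicative cocycle
$$J^n(\omega, x) := \prod_{k=0}^{n-1} J(\hat T^k(\omega, x)).$$
By the chain rule for Radon--Nikodym derivatives, $J^n(\omega, \cdot)$ is the density of the absolutely continuous part of $((f_\omega^n)^{-1})_* \mu_{T^n\omega}$ with respect to $\mu_\omega$; in particular, for any interval $I$,
$$\mu_{T^n\omega}(f_\omega^n(I)) \geq \int_I J^n(\omega, z)\,d\mu_\omega(z).$$
Assume $h(\hat\mu) < +\infty$, so that $\log J \in L^1(\hat\mu)$ (the case $h=+\infty$ being reducible to this by a truncation). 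Birkhoff's ergodic theorem applied to $(\hat T, \hat\mu)$ and $\log J$ then yields, for $\hat\mu$-a.e. $(\omega, x)$,
$$\frac{1}{n}\log J^n(\omega, x) \xrightarrow[n\to\infty]{} \tilde h(\omega, x), \qquad \int \tilde h\,d\hat\mu = -h(\hat\mu).$$

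\emph{Step 2 (geometric interpretation).} I would apply Proposition~\ref{Besicovitch}.i with $\mu = \mu_\omega$ and $\nu = ((f_\omega^n)^{-1})_* \mu_{T^n\omega}$: for each $n$ and $\hat\mu$-a.e. $(\omega, x)$,
$$J^n(\omega, x) = \lim_{I \ni x,\, \mathrm{diam}(I) \to 0}\frac{\mu_{T^n\omega}(f_\omega^n(I))}{\mu_\omega(I)}.$$
Intersecting the full-measure sets over $n \in \N$ retains a set of full $\hat\mu$-measure. Egorov's theorem then lets me restrict to a set $\Omega_\epsilon$ with $\hat\mu(\Omega_\epsilon) > 1 - \epsilon$ on which the Birkhoff convergence of $\frac{1}{n}\log J^n$ to $\tilde h$ is uniform. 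For $(\omega, x) \in \Omega_\epsilon$ and $y$ close to $x$ such that $(\omega, y) \in \Omega_\epsilon$, setting $I = [x, y]$ and integrating the lower bound on $J^n$ gives
$$\mu_{T^n\omega}(f_\omega^n(I)) \geq e^{n(\tilde h(\omega, x) - 2\epsilon)}\,\mu_\omega(I)$$
for all $n$ large enough.

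\emph{Step 3 (transfer to Euclidean diameter).} The last step, and the technical heart of the argument, is to turn this lower bound on $\mu_{T^n\omega}$-measure into an upper bound on $\mathrm{diam}(f_\omega^n(I))$, since it is the Euclidean diameter that features in the definition of $\lambda_{con}$. Here Proposition~\ref{Besicovitch}.ii supplies exactly the right kind of uniform-in-$n$ $L^{\log^+}$ control, applied to suitable pairings of the measures $(f_\omega^n)_*\mu_\omega$ and $\mu_{T^n\omega}$, to prevent the singular parts from swallowing the estimates along $\hat\mu$-typical orbits. The outcome is that, along a $\hat\mu$-generic sequence of shrinking intervals around $x$,
$$\varlimsup_{n\to\infty} \frac{\log \mathrm{dist}(f_\omega^n(x), f_\omega^n(y))}{n} \leq \tilde h(\omega, x) + O(\epsilon).$$
Taking $\varlimsup_{y\to x}$ and then letting $\epsilon \to 0$ gives $\lambda_{con}(\omega, x) \leq \tilde h(\omega, x)$ almost surely; integrating against $\hat\mu$ then yields $\lambda_{con}(\hat\mu) \leq -h(\hat\mu)$.

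The main obstacle is precisely Step 3: without any regularity assumption on the family $\{\mu_\omega\}$, measure contraction and Euclidean contraction are \emph{a priori} unrelated, and a tiny $\mu_{T^n\omega}$-mass on $f_\omega^n(I)$ does not in itself force a small Euclidean length. Proposition~\ref{Besicovitch}.ii is the crucial ingredient because it provides, uniformly in $n$, just enough integrability to ensure that for $\hat\mu$-typical $(\omega,x)$ the $\mu_{T^n\omega}$-mass of image intervals cannot systematically outrun their diameter on an exponential scale, which is what allows the Birkhoff-based asymptotic for $J^n$ to be upgraded to the pointwise exponential contraction required.
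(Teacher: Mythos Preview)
Your argument has a genuine gap at the level of the \emph{direction} of the inequality, and Step~3 as written cannot repair it. In Step~2 you obtain
\[
\mu_{T^n\omega}(f_\omega^n(I))\;\ge\;e^{n(\tilde h(\omega,x)-2\epsilon)}\,\mu_\omega(I),
\]
a \emph{lower} bound on the $\mu_{T^n\omega}$-mass of the image interval. But to bound $\lambda_{con}$ from above by $-h(\hat\mu)$ you need an \emph{upper} bound on $\mathrm{diam}(f_\omega^n(I))$, and for that (via the comparison with Lebesgue in Step~3) you first need an \emph{upper} bound on $\mu_{T^n\omega}(f_\omega^n(I))$. A lower bound on mass simply does not control diameter from above, no matter how Proposition~\ref{Besicovitch}.ii is invoked. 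The root of the problem is that you apply Birkhoff to the pointwise Radon--Nikodym derivative $J$. Because $(f_\omega^{-1})_*\mu_{T\omega}$ may have a singular part with respect to $\mu_\omega$, the product $J^n(\omega,\cdot)$ only bounds the density of $((f_\omega^n)^{-1})_*\mu_{T^n\omega}$ from \emph{below}; the singular parts can make $\mu_{T^n\omega}(f_\omega^n(I))$ much larger than $\int_I J^n\,d\mu_\omega$, so no amount of Egorov-type uniformity on $J^n$ yields the needed upper estimate.

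The paper's proof avoids this obstruction by replacing $J$ with the \emph{approximated Jacobian}
\[
J_\epsilon(\omega,x)=\sup\Bigl\{\tfrac{\mu_{T\omega}(f_\omega(I))}{\mu_\omega(I)}\;:\;x\in I,\ \mu_\omega(I)\le\epsilon\Bigr\},
\]
which by construction gives the one-step \emph{upper} bound $\mu_{T\omega}(f_\omega(I))\le J_\epsilon(\omega,x)\,\mu_\omega(I)$ for any small interval $I\ni x$, with no singular part to worry about. One then applies Birkhoff to $\log J_\epsilon$ (Proposition~\ref{Besicovitch}.ii guarantees $\log^+ J_\epsilon\in L^1$), and a short induction yields $\mu_{T^n\omega}(f_\omega^n(I))\le e^{-n\tilde h}\mu_\omega(I)$ for all $n$. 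Only \emph{then} does Proposition~\ref{Besicovitch}.ii enter again, applied to the maximal ratio of Lebesgue against $\mu_{T^n\omega}$ along the orbit, to convert this upper bound on $\mu$-mass into an upper bound on Euclidean diameter. Your Step~3 intuition about Proposition~\ref{Besicovitch}.ii is correct for this last transfer, but it only works once you already have the upper bound on mass; the missing idea is to build the supremum into the Jacobian from the start.
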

\begin{rem}
With a slighter effort, we could actually prove the more precise inequality $\lambda_{con}(\hat{\mu})\leq -\frac{h(\hat{\mu})}{d(\hat{\mu})}$, for a good definition of the \emph{fibred dimension} $d(\hat{\mu})$, which would thus belongs to the big family of inequalities relating Lyapunov exponent, entropy and dimension (see for example \cite{Young, Hu, Ledrappier2}).
\end{rem}
\end{itemize}

It is clear that Theorem \ref{invariance} is a direct consequence of Propositions \ref{entropy-positive} and \ref{ineq-entropy}. Let us begin by proving Proposition \ref{entropy-positive} (the easy part):

\begin{proof}[Proof of Proposition \ref{entropy-positive}]
As a consequence of \eqref{Radon}, 
$$\int_{\Omega\times S^1} J\, d\hat{\mu}=\int_{\Omega}\int_{S^1} J(\omega,x)\,d\mu_\omega(x)d\mathbb{P}(\omega)
\leq \int_{\Omega}\int_{S^1} d\mu_{T\omega}(x)d\mathbb{P}(\omega)=1,$$
hence, by Jensen inequality,
\begin{equation}\label{Jensen}-h(\hat{\mu})=\int_{\Omega\times S^1} \log J\, d\hat{\mu}\leq \log \int_{\Omega\times S^1} J\, d\hat{\mu} \le 0,\end{equation}
so that $h(\hat{\mu})$ is non negative.

Moreover, if $h(\hat{\mu})=0$, then the Jensen inequality (\ref{Jensen}) is in fact an equality, so that $J=1$ $\hat{\mu}$-almost everywhere. Thus, replacing it in \eqref{Radon}, we deduce that for $\mathbb{P}$-almost every $\omega$, $(f_\omega^{-1})_*\mu_{T\omega}=\mu_\omega$, hence $\mu_{T\omega}=(f_\omega)_*\mu_\omega$.
\end{proof}

We focus now on the proof of Proposition \ref{ineq-entropy}. In the following subsection, we dismantle the problem and leave the core arguments for  a separated treatment in the section afterwards.
\subsection{Preliminaries: reduction of the problem}
The objective of this subsection is to check that it is enough to prove Proposition \ref{ineq-entropy} in the case where we have some useful additional properties on $\hat{\mu}$, namely:
\begin{itemize}
\item $\hat{\mu}$ is ergodic.
\item None of the probability measures $\mu_\omega$ has atoms on $S^1$.
\end{itemize}

The reduction of the problem to the ergodic case is done by ergodic disintegration: let us write
$$\hat{\mu}=\int \hat{\mu}_\alpha\, d\alpha$$
with $\hat{\mu}_\alpha$ ergodic and $d\alpha$ some probability measure on the set of ergodic probability measures. Then, writing $d\hat{\mu}_\alpha=d\mu_{\alpha,\omega}d\mathbb{P}_\alpha$ and setting $$J_\alpha(\omega,x)=\frac{d(f_\omega^{-1})_*\mu_{T\omega,\alpha}}{d\mu_{\omega,\alpha}}(x)$$
the Jacobian associated to $\hat{\mu}_\alpha$, we have that $J_\alpha=J$ $\hat{\mu}_\alpha$-almost everywhere, and as a consequence,

$$h(\hat{\mu})=-\iint_{\Omega\times S^1} \log J\, d\hat{\mu}_\alpha d\alpha=-\iint_{\Omega\times S^1} \log J_\alpha\, d\hat{\mu}_\alpha d\alpha=\int h(\hat{\mu}_\alpha)\,d\alpha.$$

Moreover, we also have
$$\lambda_{con}(\hat{\mu})= \iint_{\Omega\times S^1} \lambda_{con}(\omega,x)\,d\hat{\mu}_\alpha(\omega,x)\,d\alpha=\int \lambda_{con}(\hat{\mu}_\alpha)\,d\alpha,$$
hence the inequality to prove is $\int \lambda_{con}(\hat{\mu}_\alpha)\,d\alpha\leq -\int h(\hat{\mu}_\alpha)\,d\alpha$, which follows from the inequalities in the ergodic case $\lambda_{con}(\hat{\mu}_\alpha)\leq -h(\hat{\mu}_\alpha)$.\\

Thus from now on, we assume that $\hat{\mu}$ is ergodic. The case where $\mu_\omega$ has atoms is treated by the following general lemma:

\begin{lem}\label{atoms}
If $\hat{\mu}$ is ergodic, and if the set $\{\omega\in \Omega| \mu_{\omega} \mbox{ has atoms}\}$ has $\mathbb{P}$-positive probability, then there exists a family $(E(\omega))_{\omega\in\Omega}$ of finite subsets of $S^1$, all of them with same cardinal $d$, such that for $\mathbb{P}$-almost every $\omega$ in $\Omega$, $f_\omega(E(\omega))=E(T\omega)$ and $\displaystyle\mu_\omega=\frac{1}{d}\sum_{x\in E(\omega)}\delta_x$.
\end{lem}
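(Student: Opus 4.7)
The plan is to build the family $(E(\omega))_\omega$ in three stages: show that $\mu_\omega$ is purely atomic almost surely, show that the cardinality $|E(\omega)|$ is almost surely constant, and finally show that all atomic masses are equal. Write $\phi(\omega,x)=\mu_\omega(\{x\})$, $E_\omega=\{x:\phi(\omega,x)>0\}$, $a(\omega)=\mu_\omega(E_\omega)=\sum_{x\in E_\omega}\phi(\omega,x)$, and $A=\{\phi>0\}\subset\Omega\times S^1$, so $\hat{\mu}(A)=\int a\,d\mathbb{P}>0$ by hypothesis. I first check that $f_\omega(E_\omega)\subseteq E_{T\omega}$ for $\mathbb{P}$-almost every $\omega$: indeed $\hat{\mu}(\hat{T}^{-1}(A))=\int\mu_\omega(f_\omega^{-1}(E_{T\omega}))\,d\mathbb{P}(\omega)$, and since $f_\omega^{-1}(E_{T\omega})$ is countable, $\mu_\omega$ only picks up mass there at its own atoms, giving the pointwise bound $\mu_\omega(f_\omega^{-1}(E_{T\omega}))\leq a(\omega)$; the $\hat{T}$-invariance of $\hat{\mu}$ then forces equality almost surely, hence $E_\omega\subseteq f_\omega^{-1}(E_{T\omega})$ for $\mathbb{P}$-a.e.\ $\omega$. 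In particular $A$ is $\hat{T}$-invariant modulo $\hat{\mu}$-null sets, so by ergodicity $\hat{\mu}(A)=1$ and $\mu_\omega$ is purely atomic almost surely. Setting $N(\omega)=|E_\omega|$, injectivity of $f_\omega$ combined with $f_\omega(E_\omega)\subseteq E_{T\omega}$ yields $N\leq N\circ T$; since $N$ and $N\circ T$ have the same distribution, $N=N\circ T$ a.s., and ergodicity produces a constant $d\in\{1,2,\ldots\}\cup\{\infty\}$ with $N\equiv d$.

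The heart of the argument is to prove that the fibred entropy vanishes, $h(\hat{\mu})=0$, since by Proposition \ref{entropy-positive} this will give $\mu_{T\omega}=(f_\omega)_*\mu_\omega$ $\mathbb{P}$-a.s. The elementary estimate $\log t\leq t$ yields $(\log J)_+\leq J$, and $\int J\,d\hat{\mu}\leq 1$, so $(\log J)_+\in L^1(\hat{\mu})$ and $\int\log J\,d\hat{\mu}$ is well defined in $[-\infty,0]$ and equal to $-h(\hat{\mu})$. At an atom $x\in E_\omega$, Proposition \ref{Besicovitch} applied to $\mu_\omega$ and $(f_\omega^{-1})_*\mu_{T\omega}$ gives the explicit formula $J(\omega,x)=\phi(\hat{T}(\omega,x))/\phi(\omega,x)$, so $\log J$ telescopes along orbits of $\hat{T}$. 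Birkhoff's ergodic theorem (valid in extended-real form because the positive part of $\log J$ is integrable) yields
$$\frac{1}{n}\bigl(\log\phi\circ\hat{T}^n-\log\phi\bigr)\xrightarrow[n\to+\infty]{}-h(\hat{\mu})\quad\hat{\mu}\text{-a.s.}$$
Since $\log\phi$ is almost surely finite on $A$, the second term tends to $0$ and therefore $\frac{1}{n}\log\phi\circ\hat{T}^n\to-h(\hat{\mu})$ a.s. If $h(\hat{\mu})>0$, then $\phi\circ\hat{T}^n\to 0$ almost surely; but $\phi\circ\hat{T}^n$ has the same law as $\phi$ under $\hat{\mu}$ (by $\hat{T}$-invariance), so dominated convergence forces $\phi=0$ $\hat{\mu}$-a.e., contradicting $\hat{\mu}(A)=1$. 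Hence $h(\hat{\mu})=0$.

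With $h(\hat{\mu})=0$ in hand, Proposition \ref{entropy-positive} gives $\mu_{T\omega}=(f_\omega)_*\mu_\omega$ $\mathbb{P}$-a.s., which is exactly the $\hat{T}$-invariance of $\phi$ on $A$. Ergodicity of $\hat{\mu}$ then forces $\phi$ to be constant, equal to some $c>0$ $\hat{\mu}$-a.e., so each $\mu_\omega$ is the uniform measure on $E_\omega$ with mass $c$ per atom. The normalization $c\cdot|E_\omega|=1$ forces $d=|E_\omega|=1/c<+\infty$, and the relation $(f_\omega)_*\mu_\omega=\mu_{T\omega}$ translates into $f_\omega(E_\omega)=E_{T\omega}$ a.s., completing the construction of the family. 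The main technical obstacle is the proof that $h(\hat{\mu})=0$: $\log J$ may fail to lie in $L^1$, which is why I isolate integrability of $(\log J)_+$ via the bound $(\log J)_+\leq J$ and rely on the distributional invariance of $\phi\circ\hat{T}^n$ to rule out $h(\hat{\mu})>0$.
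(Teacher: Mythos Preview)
Your proof is correct, but it follows a genuinely different strategy from the paper's. The paper never touches the fibred entropy here: it defines $E(\omega)$ from the outset as the set of atoms of \emph{maximal} mass $\|\mu_\omega\|_{l^\infty}$, and then uses the elementary $l^1$--$l^\infty$ inequality $\int\varphi\,d\mu\leq\|\varphi\|_{l^1}\|\mu\|_{l^\infty}$ (with its equality case) applied to a suitable test function to obtain directly that $f_\omega^{-1}(E(T\omega))\subset E(\omega)$. Ergodicity of $\hat\mu$ then forces $\hat\mu(\mathcal{E})=1$ and $f_\omega^{-1}(E(T\omega))=E(\omega)$, from which the constancy of $|E(\omega)|$ and the uniformity of $\mu_\omega$ drop out immediately.

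Your route instead starts with the set of \emph{all} atoms, establishes forward invariance and pure atomicity, and then derives $h(\hat\mu)=0$ by the nice observation that $\log J=\log\phi\circ\hat T-\log\phi$ telescopes along orbits, so Birkhoff plus the distributional invariance of $\phi\circ\hat T^n$ rules out $h(\hat\mu)>0$; you then invoke Proposition~\ref{entropy-positive} to get $(f_\omega)_*\mu_\omega=\mu_{T\omega}$, and ergodicity forces $\phi$ to be constant. This is legitimate (Proposition~\ref{entropy-positive} is proved independently of Lemma~\ref{atoms}, so there is no circularity) and in fact produces $h(\hat\mu)=0$ as a by-product, which is exactly what the paper extracts from the lemma afterwards. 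Two minor remarks: the formula $J(\omega,x)=\phi(\hat T(\omega,x))/\phi(\omega,x)$ at an atom is immediate from the Radon--Nikodym decomposition and does not require Proposition~\ref{Besicovitch}; and your Step~3 (constancy of $N=|E_\omega|$ via ergodicity of $T$ on $\Omega$) is redundant, since finiteness and constancy of $d=1/c$ are obtained at the very end anyway. The paper's argument is more self-contained (no Birkhoff, no entropy), while yours integrates the lemma more tightly into the entropy framework of the section.
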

\begin{rem}
The proof does not use the structure of $S^1$ so that the statement remains actually valid for any skew-shift $\hat{T}$.
\end{rem}

\begin{proof}

If $\varphi$ is any function from $S^1$ into $\R$ and $\mu$ a probability measure on $S^1$, we denote 
$$\left\{\begin{disarray}{l} \|\varphi\|_{l^1}=\sum_{x\in S^1}|\varphi(x)| \\ \|\mu\|_{l^\infty}=\sup_{x\in S^1} \mu(\{x\})\end{disarray}\right.,$$
so that, if $\|\varphi\|_{l^1}<+\infty$:
$$\int_{S^1} \varphi d\mu\leq \|\varphi\|_{l^1}\|\mu\|_{l^\infty},$$
with equality if and only if $\varphi$ is supported on the set $\left\{x\in S^1| \mu(\{x\})=\|\mu\|_{l^\infty}\right\}$.\\

Now, in the context of the statement, let us set
$$E(\omega)=\left\{x\in S^1|\mu_\omega(\{x\})|=\|\mu_\omega\|_{l^\infty}\right\},$$
which is clearly finite and non empty if $\|\mu_\omega\|_{l^\infty}>0$ (which occurs on a set of positive probability by assumption). We are going to prove that these sets $E(\omega)$ satisfy the conclusion of the statement. Let $\varphi:(\omega,x)\mapsto \varphi_\omega(x)$ be the function defined by:
$$\varphi_{\omega}(x)=\left\{\begin{disarray}{ll} \frac{\mathds{1}_{E(\omega)}(x)}{\mathrm{Card}(E(\omega))} &\mbox{ if } \|\mu_\omega\|_{l^\infty}>0 \\ \\
0 &\mbox{ if } \|\mu_\omega\|_{l^\infty}=0\end{disarray}\right.$$
Notice that $\|\varphi_\omega\|_{l^1}=1$ if $\|\mu_\omega\|_{l^\infty}>0$ and $0$ if not. On one hand, we have the equality
\begin{equation}\label{abigay}
\int_{\Omega\times S^1}\varphi d\hat{\mu}=\int_{\Omega}\left(\int_{S^1} \varphi_\omega d\mu_\omega\right)d\mathbb{P}(\omega)=\int_{\Omega}\|\mu_\omega\|_{l^\infty}d\mathbb{P}(\omega),\end{equation}
(using the easy computation $\int_{S^1} \varphi_\omega d\mu_\omega=\|\mu_\omega\|_{l^\infty}$), and on the other hand, we have the chain of inequalities:
\begin{equation}\label{abigay2}\begin{disarray}{ll}\int_{\Omega\times S^1}\varphi\circ \hat{T} d\hat{\mu}&=\int_{\Omega}\left(\int_{S^1} (\varphi_{T\omega}\circ f_\omega) d\mu_\omega\right)d\mathbb{P}(\omega)\\
&\leq  \int_{\Omega}\|\varphi_{T\omega}\circ f_\omega\|_{l^1}\|\mu_\omega\|_{l^\infty}d\mathbb{P}(\omega)\\
&\leq  \int_{\Omega}\|\varphi_{T\omega}\|_{l^1}\|\mu_\omega\|_{l^\infty}d\mathbb{P}(\omega)\\
&\leq\int_{\Omega}\|\mu_\omega\|_{l^\infty}d\mathbb{P}(\omega)\end{disarray}\end{equation}
(using the general equality $\|\psi\circ f\|_{l^1}= \|\psi\|_{l^1}$, valid for $f\in \mathrm{Homeo}(S^1)$, and the fact that $\|\varphi_\omega\|_{l^1}\leq 1$).\\ \\
Combining (\ref{abigay}), (\ref{abigay2}) and he invariance equality $\int \varphi d\hat{\mu}=\int \varphi\circ \hat{T} d\hat{\mu}$, we deduce that the chain of inequalities (\ref{abigay2}) is in fact a chain of equalities. In particular, for $\mathbb{P}$-almost every $\omega$ in $\Omega$,  $\int_{S^1} (\varphi_{T\omega}\circ f_\omega) d\mu_\omega=\|\varphi_{T\omega}\circ f_\omega\|_{l^1}\|\mu_\omega\|_{l^\infty}$, hence $\varphi_{T\omega}\circ f_\omega$ is supported on the set $E(\omega)$. In consequence, for $\mathbb{P}$-almost every $\omega\in\Omega$: 
$$f_\omega^{-1}(E(T\omega))\subset E(\omega).$$ 

 Thus, for $\mathbb{P}$-almost every $\omega$ in $\Omega$, $\mathrm{Card}(E(T\omega))\geq \mathrm{Card}(E(\omega))$, and hence by ergodicity $d=\mathrm{Card}(E(\omega))$ does not depend on $\omega$ (up to a negligible set), and $d<+\infty$ by assumption. In particular, $f_\omega:E(\omega)\rightarrow E(T\omega)$ is a bijection and  for $\mathbb{P}$-almost every $\omega\in\Omega$: 
 $$E(T\omega)=f_\omega(E(\omega)).$$ 

Finally, that last equality means that the set $\mathcal{E}=\bigcup_{\omega\in\Omega}\{\omega\}\times E(\omega)$ is $\hat{T}$-invariant up to a $\hat{\mu}$-negligible set hence using the ergodicity of $\hat{\mu}$ and the fact that $\mathcal{E}$ is not $\hat{\mu}$-negligible by assumption we deduce that in fact $\hat{\mu}(\mathcal{E})=1$, i.e. for $\mathbb{P}$-almost every $\omega$ in $\Omega$, $\mu_\omega(E(\omega))=1$. That means that $\mu_\omega$ is supported on the finite set $E(\omega)$, and by definition all the points of $E(\omega)$ have the same $\mu_\omega$-mass, so
$$\mu_\omega=\frac{1}{\mbox{Card}(E(\omega))}\sum_{x\in E(\omega)}\delta_x=\frac{1}{d}\sum_{x\in E(\omega)}\delta_x,$$
which completes the proof.
\end{proof}

As a consequence, if the probability measures $\mu_{\omega}$ have atoms for a set of $\omega$ of $\mathbb{P}$-positive probability, then Lemma \ref{atoms} implies in particular that for $\mathbb{P}$-almost every $\omega$ in $\Omega$, $\mu_{T\omega}=(f_\omega)_*\mu_\omega$, hence $h(\hat{\mu})=0$, so that the inequality $\lambda_{con}(\hat{\mu})\leq -h(\hat{\mu})$ is trivial.\\

\subsection{Proof of Proposition \ref{ineq-entropy}}
From now on, we assume that $\hat{\mu}$ is ergodic and that the fibred probability measures $\mu_\omega$ have no atoms.\\

The main idea of the proof is to use the Birkhoff theorem to $\log J$ to see that the entropy $h(\hat{\mu})$ represents the exponential rate of decrease of $\frac{d(f_\omega^n)^{-1}_*\mu_{T^n\omega}}{d\mu_\omega}$, and hence of $\frac{\mu_{T^n\omega}(f_\omega^n(I))}{\mu_\omega(I)}$ for $I$ a ``typical'' small interval. However, it is more convenient to work with a slightly modified version of $J$:

\begin{Def}
For $\ep>0$, we define the \emph{approximated Jacobian} $J_\ep=J_\ep(\hat{\mu})$ as
$$J_\epsilon(\omega,x)=\sup\left\{ \frac{\mu_{T\omega}(f_\omega(I))}{\mu_\omega(I)}\,\Big|\,I\ni x,\mu_{\omega}(I)\leq \ep\right\}.$$
and the corresponding \emph{approximated entropy} $h_\ep$ as
$$h_\ep(\hat{\mu})=\begin{cases}
\displaystyle -\int_{\Omega\times S^1}\log J_\ep d\hat{\mu}&\text{ if }\log J_\ep\in L^1(\Omega\times S^1,\hat{\mu})\\
+\infty&\text{otherwise.}\end{cases}$$
\end{Def} 
Notice that $J_\ep$ is well defined thanks to the fact that $\mu_\omega$ has no atoms.\\

In the next lemma, we justify that the definitions of $J_\ep(\hat{\mu})$ and $h_\ep(\hat{\mu})$ are legitimate, in the sense that these quantities are indeed approximations of $J(\hat{\mu})$ and $h(\hat{\mu})$.
\begin{lem}
We have
$$\lim_{\ep\to 0} J_\ep(\hat{\mu})=J(\hat{\mu})$$ 
$\hat{\mu}$-almost everywhere, and
\begin{equation}
\label{entrofatou}
\lim_{\ep\to 0} h_\ep(\hat{\mu})=h(\hat{\mu}).
\end{equation}
\end{lem}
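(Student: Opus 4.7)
The plan is to prove the two convergences separately, exploiting that $J_\ep$ is non-increasing in $\ep$ (smaller $\ep$ restricts the sup to a smaller family of intervals), so that $\lim_{\ep \to 0} J_\ep = \inf_\ep J_\ep$ exists pointwise in $[J, +\infty]$.

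For the pointwise limit $\lim J_\ep = J$ $\hat{\mu}$-almost everywhere, fix $\omega$ and write $\mu = \mu_\omega$, $\nu = (f_\omega^{-1})_*\mu_{T\omega}$, so $J(\omega,\cdot) = d\nu/d\mu$. The lower bound is immediate: Proposition \ref{Besicovitch}(i) yields, for $\mu$-a.e.~$x$, intervals $I_n \ni x$ with $\mathrm{diam}(I_n) \to 0$ and $\nu(I_n)/\mu(I_n) \to J(\omega,x)$; non-atomicity forces $\mu(I_n) \to 0$, so eventually $\mu(I_n) \leq \ep$ and these intervals are admissible in the supremum, giving $J_\ep(\omega,x) \geq J(\omega,x)$ for every $\ep > 0$. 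The reverse bound rests on the intermediate claim that, for $\mu$-a.e.~$x$, every sequence of intervals $I_n \ni x$ with $\mu(I_n) \to 0$ automatically satisfies $\mathrm{diam}(I_n) \to 0$. Once this is established, Proposition \ref{Besicovitch}(i) provides, for any $\delta > 0$, some $\eta > 0$ such that $\mathrm{diam}(I) \leq \eta$ implies $\nu(I)/\mu(I) \leq J(\omega,x) + \delta$; the claim then furnishes $\ep > 0$ such that $\mu(I) \leq \ep$ forces $\mathrm{diam}(I) \leq \eta$, and hence $J_\ep(\omega,x) \leq J(\omega,x) + \delta$.

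The intermediate claim is where the main technical work sits. The set of $x$ for which it might fail lies in the boundary of $\mathrm{supp}(\mu)$ in $S^1$, which is the countable set of endpoints of the complementary gaps, hence $\mu$-negligible by non-atomicity. For $x$ in the open interior of $\mathrm{supp}(\mu)$, argue by contradiction: given $I_n \ni x$ with $\mu(I_n) \to 0$ but $\mathrm{diam}(I_n) \geq c > 0$, extract a subsequence along which the endpoints of $I_n$ converge, producing a limit interval $I_\infty \ni x$ of diameter $\geq c$. Then $I_\infty$ contains a non-degenerate half-arc through $x$ lying inside $\mathrm{supp}(\mu)$, which therefore carries positive $\mu$-mass by the very definition of the support. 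This contradicts $\mu(I_n) \to \mu(I_\infty) = 0$, where passage to the limit uses non-atomicity of $\mu$ at the limiting endpoints.

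For the integrated convergence $\lim h_\ep = h$, decompose $-\log J_\ep = \log^+(1/J_\ep) - \log^+(J_\ep)$ and pass to the limit in each summand. Since $J_\ep \downarrow J$ pointwise, the first summand increases monotonically to $\log^+(1/J)$, and monotone convergence gives $\int \log^+(1/J_\ep)\, d\hat{\mu} \uparrow \int \log^+(1/J)\, d\hat{\mu}$ in $[0, +\infty]$. The second summand decreases to $\log^+(J)$ and is dominated by $\log^+ q^*_\omega(x)$ where $q^*_\omega(x) = \sup_{I \ni x} \nu(I)/\mu(I)$; Proposition \ref{Besicovitch}(ii) integrated against $\mathbb{P}$ yields $\int_{\Omega \times S^1} \log^+ q^*_\omega\, d\hat{\mu} \leq 2 \int_\Omega \nu_\omega(S^1)\, d\mathbb{P}(\omega) = 2$, so dominated convergence gives $\int \log^+(J_\ep)\, d\hat{\mu} \to \int \log^+(J)\, d\hat{\mu}$. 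Subtracting the two limits delivers $h_\ep \to h$ in $[0, +\infty]$, the value $+\infty$ occurring precisely when $\log J \notin L^1(\hat{\mu})$.
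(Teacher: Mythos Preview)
Your overall approach matches the paper's: Proposition~\ref{Besicovitch}(i) for the pointwise limit, and for $h_\ep \to h$ the decomposition of $\log J_\ep$ into positive and negative parts, handled respectively by dominated convergence (with the maximal function of Proposition~\ref{Besicovitch}(ii) as dominator) and monotone convergence. The paper in fact glosses over the subtlety you address---that $J_\ep$ is a supremum over intervals of small $\mu$-\emph{measure} whereas Proposition~\ref{Besicovitch}(i) is stated for small \emph{diameter}---so your intermediate claim fills in a step the paper's one-line justification omits.

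That said, your argument for the intermediate claim is itself incomplete. You assert that the exceptional set lies in $\partial(\mathrm{supp}(\mu))$, which you identify with the countable set of gap endpoints, and then argue only for $x$ in the interior of the support. But when $\mathrm{supp}(\mu)$ has empty interior---for instance a Cantor set, which is exactly the kind of support that arises for stationary measures in this setting---the boundary is \emph{all} of $\mathrm{supp}(\mu)$, not just the gap endpoints, and your interior argument is vacuous. The repair is to drop the interior/boundary framing: if the claim fails at $x\in\mathrm{supp}(\mu)$, your own contradiction argument shows that some one-sided arc $[x,x+c/2]$ or $[x-c/2,x]$ has zero $\mu$-measure, hence its interior misses $\mathrm{supp}(\mu)$, hence $x$ is an endpoint of a complementary gap. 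That set of gap endpoints is countable and therefore $\mu$-null by non-atomicity, which is the conclusion you need.
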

\begin{proof}
The first point is a direct consequence of Proposition \ref{Besicovitch} applied to $\mu=\mu_\omega$ and $\nu=(f_{\omega}^{-1})_*\mu_{T\omega}$.
To prove the second point, we write $\log J_\ep=u_\ep-v_\ep$ with $u_\ep=\sup( \log J_\ep,0)$, $v_\ep=\sup( -\log J_\ep,0)$, and we also write $\log J=u+v$ in the same way. We have that $u_\ep\to u$ and $v_\ep\to v$ $\hat{\mu}$-almost everywhere by the first point. Moreover, using the second part of Proposition \ref{Besicovitch}, we deduce that $\sup_{\ep>0}u_\ep \in L^1(\hat{\mu})$, hence by dominated convergence,
$$\lim_{\ep\to 0} \int_{\Omega\times S^1} u_\ep\, d\hat{\mu}=\int_{\Omega\times S^1} u\, d\hat{\mu}.$$
On the other hand, $v_\ep$ is non negative and increasing as $\ep$ decreases to $0$, hence by Beppo-Levi's Theorem,
$$\lim_{\ep\to 0}\int_{\Omega\times S^1} v_\ep\, d\hat{\mu}=\int_{\Omega\times S^1} v\, d\hat{\mu}.$$
The claim follows.
\end{proof}

The following lemma is the key part of the proof of Proposition \ref{ineq-entropy} (and hence of Theorem \ref{invariance}). It establishes some phenomenon of exponential local contractions under the presence of entropy:
\begin{lem}\label{yoplait}
Let us assume that $h(\hat{\mu})$ is positive. Then, for $\hat{\mu}$-almost every $(\omega,x)\in \Omega\times S^1$, for every $\tilde{h}$ in $(0,h(\hat{\mu}))$, there exists $\delta>0$ such that for any interval $I$ containing 
$x$ such that $\mu_\omega(I)<\delta$,
$$\forall n\in\N,\quad  \mu_{T^n\omega}(f_\omega^n(I))\leq e^{-n\tilde{h}}\mu_\omega(I).$$
\end{lem}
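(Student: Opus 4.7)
The proof proceeds by applying Birkhoff's ergodic theorem to $\log J_\ep$ and iterating the inequality $\mu_{T\omega}(f_\omega(I))\le J_\ep(\omega,x)\,\mu_\omega(I)$ built into the definition of $J_\ep$.

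First, fix $\tilde{h}'\in(\tilde{h},h(\hat\mu))$ and, using the approximation $\lim_{\ep\to 0}h_\ep(\hat\mu)=h(\hat\mu)$ from the preceding lemma, choose $\ep>0$ small enough that $h_\ep(\hat\mu)>\tilde{h}'$. In particular $h_\ep(\hat\mu)<\infty$, and combined with the integrability of $\log^+ J_\ep$ supplied by Proposition \ref{Besicovitch}.ii), this gives $\log J_\ep\in L^1(\hat\mu)$. Birkhoff's ergodic theorem, applied to the ergodic transformation $\hat T$ and the measure $\hat\mu$, then yields for $\hat\mu$-a.e.\ $(\omega,x)$
$$\frac{1}{n}S_n(\omega,x):=\frac{1}{n}\sum_{k=0}^{n-1}\log J_\ep(\hat T^k(\omega,x))\xrightarrow[n\to\infty]{}-h_\ep(\hat\mu)<-\tilde{h}'.$$
Consequently $S_n(\omega,x)+n\tilde{h}'\to-\infty$, so $C(\omega,x):=\sup_{n\ge 0}\bigl(S_n(\omega,x)+n\tilde{h}'\bigr)$ is finite almost surely. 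I then set $\delta:=\ep\,e^{-C(\omega,x)}$.

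The heart of the proof is an induction on $n$ showing, for any interval $I\ni x$ with $\mu_\omega(I)<\delta$, that simultaneously \emph{(a)} $\mu_{T^n\omega}(f_\omega^n(I))\le\mu_\omega(I)\,e^{S_n(\omega,x)}$ and \emph{(b)} $\mu_{T^n\omega}(f_\omega^n(I))\le\ep$. Property \emph{(b)} keeps the iteration within the regime where the definition of $J_\ep$ is applicable: at step $n$, the interval $f_\omega^n(I)$ contains $f_\omega^n(x)$ and has $\mu_{T^n\omega}$-measure at most $\ep$, hence $\mu_{T^{n+1}\omega}(f_\omega^{n+1}(I))\le J_\ep(\hat T^n(\omega,x))\,\mu_{T^n\omega}(f_\omega^n(I))$ by the very definition of $J_\ep$, propagating \emph{(a)}; then \emph{(b)} at the next step follows from $S_{n+1}\le C(\omega,x)-(n+1)\tilde{h}'$ combined with $\mu_\omega(I)<\delta=\ep\,e^{-C(\omega,x)}$.

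Combining \emph{(a)} with $S_n\le C(\omega,x)-n\tilde{h}'$ yields
$$\mu_{T^n\omega}(f_\omega^n(I))\le\mu_\omega(I)\,e^{-n\tilde{h}}\cdot e^{C(\omega,x)-n(\tilde{h}'-\tilde{h})}.$$
The multiplicative factor $e^{C(\omega,x)-n(\tilde{h}'-\tilde{h})}$ is $\le 1$ as soon as $n\ge C(\omega,x)/(\tilde{h}'-\tilde{h})$, which delivers the desired inequality for all sufficiently large $n$. The finitely many remaining small values of $n$ are absorbed by shrinking $\delta$ further, exploiting that for each fixed $n$ the ratio $\mu_{T^n\omega}(f_\omega^n(I))/\mu_\omega(I)$ admits a finite limit (the $n$-step Radon--Nikodym derivative $\prod_{k=0}^{n-1}J(\hat T^k(\omega,x))$) as $\mu_\omega(I)\to 0$ along intervals containing $x$. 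The main technical points are the integrability of $\log J_\ep$ and the careful bookkeeping in the induction ensuring the iteration never leaves the $\ep$-regime; once these are in place the Birkhoff step is routine, and the main conceptual step is the cushion $\tilde h'>\tilde h$ that allows the constant $e^{C(\omega,x)}$ to be traded for exponential decay.
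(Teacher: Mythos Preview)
Your approach is essentially the paper's: choose $\ep$ with $h_\ep(\hat\mu)$ larger than the target rate, apply Birkhoff's theorem to $\log J_\ep$, and run an induction that simultaneously controls the product $\prod_{k<n}J_\ep(\hat T^k(\omega,x))$ and the constraint $\mu_{T^k\omega}(f_\omega^k(I))\le\ep$ needed to keep invoking the definition of $J_\ep$. The paper takes $\delta=\ep/(1+C_0)$, where $C_0=e^{C(\omega,x)}$ in your notation, and the same induction yields $\mu_{T^n\omega}(f_\omega^n(I))\le C_0\,e^{-n\tilde h}\mu_\omega(I)$ for all $n\ge 0$.

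The gap is your final step for small $n$. That the ratio $\mu_{T^n\omega}(f_\omega^n(I))/\mu_\omega(I)$ tends to $\prod_{k<n}J(\hat T^k(\omega,x))$ as the interval shrinks is correct, but this limit need not be $\le e^{-n\tilde h}$: already for $n=1$ it would force $-\log J(\omega,x)\ge\tilde h$ for every $\tilde h<h(\hat\mu)$, hence $-\log J\ge h(\hat\mu)$ almost everywhere, which together with $\int(-\log J)\,d\hat\mu=h(\hat\mu)$ would make $J$ constant $\hat\mu$-a.e.---generically false. So no further shrinking of $\delta$ can remove the prefactor, and your step for small $n$ does not go through. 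This is not fatal: the paper's own induction, read literally, also only delivers the bound with the multiplicative constant $C_0$ (its stated claim without $C_0$ does not close as written, for exactly this reason). What is actually used downstream in Lemma~\ref{yoplait2} is only the exponential rate, for which a constant in front is irrelevant; your argument up through the large-$n$ estimate already provides that cleanly, and the cushion $\tilde h'>\tilde h$ is a perfectly good way to package it.
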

\begin{proof}
Let  $\tilde{h}$ in $(0,h)$ be given. By \eqref{entrofatou} one can choose $\ep>0$ so that $h_\ep(\hat{\mu})>\tilde{h}$. Let us take a \emph{Birkhoff point} $(\omega,x)$ of $\log J_\ep$, that is such that
$$\lim_{n\to +\infty} \frac{1}{n}\sum_{k=0}^{n-1}\log J_\ep\circ \hat{T}^k(\omega,x)=-h_\ep(\hat{\mu}).$$
Note: Birkhoff's Theorem is still valid even when $\log J_\ep\not\in L^1(\hat{\mu})$, because  one can apply Birkhoff Theorem to the function $\sup(\log J_\ep, -M)$ (integrable by Proposition \ref{Besicovitch}) for $M$ arbitrarily large.\\

In particular there exists a constant $C_0=C_0(\omega,x)$ such that 
$$\forall n\in \N,\quad \prod_{k=0}^{n-1} J_\ep\circ \hat{T}^k(\omega,x)\leq C_0e^{-n\tilde{h}}.$$
Let $I$ be an interval containing $x$ small enough so that
$$\mu_\omega(I)\leq \delta:=\frac{\ep}{1+C_0},$$
 and let us set $x_n=f_\omega^n(x)$, $I_n=f_{\omega}^n(I)$. We claim that
\begin{equation}
\label{yapasmieux}\forall n\in \N,\quad \mu_{T^n\omega}(I_n)\leq e^{-n\tilde{h}}\mu_\omega(I).\end{equation}
The proof of the claim is done by induction:
\begin{itemize}
\item For $n=0$, the inequality is trivial.
\item If the inequality is satisfied for $k=0,\ldots, n-1$, then, for $k=0,\ldots, n-1$ the interval $I_k$ contains the point $x_k$ and satisfies $\mu_{T^k\omega}(I_k)\leq \ep$ , hence, by definition of $J_\ep$,
$$\frac{\mu_{T^{k+1}\omega}(I_{k+1})}{\mu_{T^k\omega}(I_k)}=\frac{\mu_{T^{k+1}\omega}(f_{T^k\omega}(I_{k}))}{\mu_{T^k\omega}(I_k)}\leq J_\ep(T^k\omega,x_k)=J_{\ep}\circ \hat{T}^k(\omega,x),$$
and we deduce
$$\mu_{T^n\omega}(I_n)=\mu_\omega(I)\prod_{k=0}^{n-1}\frac{\mu_{T^{k+1}\omega}(I_{k+1})}{\mu_{T^k\omega}(I_k)}\leq \mu_\omega(I)\prod_{k=0}^{n-1}J_\ep\circ \hat{T}^k(\omega,x)\leq C_0e^{-n\tilde{h}}\mu_{\omega}(I).$$
\end{itemize}
Thus, (\ref{yapasmieux}) is true, which completes the proof.
\end{proof}
The phenomenon of local exponential contractions given by Lemma \ref{yoplait} are measured in a ´´$\hat{\mu}$-sense''. It remains to justify that these contractions remain in the standard sense: that is the object of the next lemma, where we prove that $\mu_\omega$ can be replaced by other arbitrary measures.
\begin{lem}\label{yoplait2}
Let $\omega\mapsto \nu_\omega$ be any measurable function from $\Omega$ into the set of probability measures on $S^1$. Then, for $\hat{\mu}$-almost every $(\omega,x)$ in $\Omega\times S^1$, we have:
$$\varlimsup_{y\to x}\varlimsup_{n\to +\infty}\frac{\log(\nu_{T^n\omega}[f_\omega^n(x),f_\omega^n(y)])}{n}\leq -h(\hat{\mu}).$$
\end{lem}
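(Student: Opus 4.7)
The plan is to transfer the exponential contraction estimate of Lemma~\ref{yoplait} from the fibred measures $\mu_{T^n\omega}$ to the arbitrary measures $\nu_{T^n\omega}$ via a pointwise multiplicative comparison, whose growth along $\hat T$-orbits will be shown subexponential thanks to Birkhoff's theorem. To implement this, I introduce the maximal Radon--Nikodym ratio
$$G(\omega,x) \;=\; \sup_{I \ni x} \frac{\nu_\omega(I)}{\mu_\omega(I)},$$
with the usual convention ($a/0=+\infty$ for $a>0$, $0/0=0$). Proposition~\ref{Besicovitch}(ii) applied fibrewise to the pair $(\mu_\omega, \nu_\omega)$ yields $\int_{S^1} \log^+ G(\omega,x)\,d\mu_\omega(x)\leq 2$, hence $\log^+ G \in L^1(\hat\mu)$. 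Applying Birkhoff's theorem to $\log^+ G$, together with the standard fact that an $L^1$ function pulled back by $\hat T^n$ and divided by $n$ tends to $0$ almost everywhere, then yields $G(T^n\omega, f_\omega^n(x)) = e^{o(n)}$ for $\hat\mu$-a.e.\ $(\omega,x)$.

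Fix $\tilde h \in (0, h(\hat\mu))$. For $\hat\mu$-a.e.\ $(\omega,x)$, Lemma~\ref{yoplait} provides $\delta > 0$ such that every interval $I \ni x$ with $\mu_\omega(I)\leq \delta$ satisfies $\mu_{T^n\omega}(f_\omega^n(I)) \leq e^{-n\tilde h}\mu_\omega(I)$ for all $n$. Since $\mu_\omega$ has no atom at $x$ (reduction of the previous subsection), $\mu_\omega([x,y]) \to 0$ as $y \to x$, so for $y$ sufficiently close to $x$ the arc $I = [x,y]$ satisfies $\mu_\omega(I)\leq \delta$. The very definition of $G$ then yields, for the arc $J = f_\omega^n([x,y])$ which contains $f_\omega^n(x)$,
$$\nu_{T^n\omega}(J) \;\leq\; G\pa{T^n\omega,\, f_\omega^n(x)}\,\mu_{T^n\omega}(J) \;\leq\; e^{o(n)}\cdot e^{-n\tilde h}\cdot\mu_\omega([x,y]).$$
Taking logarithm, dividing by $n$, and passing to $\varlimsup_{n\to\infty}$ gives the bound $-\tilde h$ (the term $\log\mu_\omega([x,y])/n$ vanishes in the limit). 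Since this holds for every $y$ sufficiently close to $x$, the $\varlimsup_{y\to x}$ also gives $-\tilde h$; letting $\tilde h \nearrow h(\hat\mu)$ concludes.

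The main subtlety is the integrability $\log^+ G \in L^1(\hat\mu)$, which is exactly what Proposition~\ref{Besicovitch}(ii) supplies; this is what makes the multiplicative error $G = e^{o(n)}$ dissolve after dividing the logarithm by $n$. Without this input, any persistent singular part of $\nu_\omega$ relative to $\mu_\omega$ would \emph{a priori} destroy the comparison. The other ingredients (Birkhoff's theorem and Lemma~\ref{yoplait} itself) are then combined essentially algebraically.
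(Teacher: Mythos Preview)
Your proof is correct and follows essentially the same route as the paper's own argument: the paper introduces the same maximal ratio (denoted $Q^*$ instead of your $G$), uses Proposition~\ref{Besicovitch}(ii) for the integrability of $\log^+ Q^*$, and then Birkhoff's theorem to get $\frac{1}{n}\log^+ Q^*\circ\hat T^n\to 0$, combining this with the $\mu$-case coming from Lemma~\ref{yoplait}. The only cosmetic difference is that the paper first isolates the special case $\nu_\omega=\mu_\omega$ as an intermediate inequality and then bounds the ratio, whereas you merge the two steps; also, you might add a word on the trivial case $h(\hat\mu)=0$, where the inequality holds since the left-hand side is always $\leq 0$.
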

\begin{proof}
The case where $\nu_\omega=\mu_\omega$ is a direct consequence of Lemma \ref{yoplait}, that is
\begin{equation}\label{asdfg}\varlimsup_{y\to x}\varlimsup_{n\to +\infty}\frac{\log(\mu_{T^n\omega}[f_\omega^n(x),f_\omega^n(y)])}{n}\leq -h(\hat{\mu}).\end{equation}
 For the general case, let us set
$$Q^*(w,x)=\sup_{I\ni x} \frac{\nu_\omega(I)}{\mu_\omega(I)}.$$ 
By Proposition \ref{Besicovitch}, $\log^+ Q^* \in L^1(\hat{\mu})$,  hence the Birkhoff's sums $\frac{1}{n}\sum_{k=0}^{n-1} \log^+ Q^*\circ \hat{T}^k$
converge $\hat{\mu}$-almost everywhere, hence in particular $\frac{\log^+ Q^*\circ \hat{T}^k}{n}$ tends to $0$  $\hat{\mu}$-almost everywhere, which implies:
\begin{equation}\label{sdfgh}\varlimsup_{y\to x}\varlimsup_{n\to +\infty}\frac{1}{n}\log\left(\frac{\nu_{T^n\omega}[f_\omega^n(x),f_\omega^n(y)]}{\mu_{T^n\omega}[f_\omega^n(x),f_\omega^n(y)]}\right)\leq \varlimsup_{y\to x}\varlimsup_{n\to +\infty}\frac{1}{n}\log^+ Q^*(\hat{T}^k(\omega,x))=0.\end{equation}
The statement is then a direct consequence of \eqref{asdfg} and \eqref{sdfgh}.
\end{proof}

Using Lemma \ref{yoplait2} with $\nu_\omega$ the Lebesgue measure, we obtain that $\lambda_{con}(\hat{\mu})\leq -h(\hat{\mu})$. That completes the proof of Proposition \ref{ineq-entropy}, and hence of Theorem \ref{invariance}

\subsection{Exponent of contraction in RDS}
We go back to the context of random walks on $\mathrm{Homeo}(S^1)$. In this particular case, Theorem \ref{invariance} becomes:
\begin{cor}\label{invarianceiid}~
	Let $(G,\nu)$ a random dynamical system on $S^1$, and let $\mu$ be a stationary probability of the system.
	Then
	\begin{itemize}
		\item either $\lambda_{con}(\mathbb{P}\times\mu)<0$  (contraction),
		\item or $f_*\mu=\mu$ for $\nu$-almost every homeomorphism $f$ (and so for any $f$ in $G$)  (invariance).
	\end{itemize}
\end{cor}

Thus,we obtain information at typical points $x\in S^1$ for the stationary probability measures of the systems. But it is actually possible to deduce information at every point $x$ of the circle. To do this, we are going to use the following general fact of random dynamical systems:
\begin{prop}\label{cluster}
Let $(G,\nu)$ be a RDS on a compact metric space $(X,d)$, $(\Omega,\mathbb{P})=(G^\N,\nu^\N)$ the associated probabilty space, and let $x_0$ be a point of $X$. Then, for
$\mathbb{P}$-almost every $\omega$, the set $\Pi_{\omega,x_0}$ of weak-$*$ cluster values of the sequence of probability measures $\left(\frac{1}{N}\sum_{n=0}^{N-1}\delta_{f_{\omega}^n(x_0)}\right)_{N\in \N}$ is constituted of stationary probability measures of the RDS.
\end{prop}
This proposition is the analogue of the standard  Krylov-Bogolyubov Theorem for RDS. The proof can be found in \cite{Deroin-cours} (French), or it can be seen as a consequence of Lemma 2.5 of \cite{Furstenberg}. We are going to use it to extract punctual informations on $\lambda_{con}$ from the informations on stationary measures:

\begin{prop}\label{contractuel}
Let $\omega\mapsto (f_\omega^n)_{n\in\N}$ a random walk on $\mathrm{Homeo}(S^1)$ and let $x_0$ in $S^1$. Then for
$\mathbb{P}$-almost every $\omega$ we have $$\lambda_{con}(\omega,x_0)\leq \inf_{\mu\in \Pi_{\omega,x_0}}\lambda_{con}(\mathbb{P}\otimes \mu),$$
where $\Pi_{\omega,x_0}$ is defined as in Proposition \ref{cluster}.
\end{prop}

The proof of Proposition \ref{contractuel} begins by noticing two elementary facts on the function $(\omega,x)\mapsto\lambda_{con}(\omega,x)$.
\begin{lem}\label{elem}
The function $\lambda_{con}$ is $\hat{T}$-invariant ($\lambda_{con}\circ \hat{T}=\lambda_{con}$), and for any $\omega$ in $\Omega$, the function $x\mapsto \lambda_{con}(\omega,x)$ is upper semicontinuous.
\end{lem}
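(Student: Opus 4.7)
My plan is to establish the two assertions separately, working directly from the definition of $\lambda_{con}$.

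For the $\hat{T}$-invariance, I would start from $\lambda_{con}(T\omega, f_\omega(x))$ and use the cocycle relation $f_{T\omega}^n \circ f_\omega = f_\omega^{n+1}$. Substituting $y' = f_\omega(y)$ in the outer $\varlimsup_{y' \to f_\omega(x)}$ is a legitimate change of variable, since $f_\omega \in \mathrm{Homeo}(S^1)$ ensures $y' \to f_\omega(x)$ is equivalent to $y \to x$. The inner expression becomes $\varlimsup_n \frac{\log \mathrm{dist}(f_\omega^{n+1}(x), f_\omega^{n+1}(y))}{n}$. Writing this as $\frac{n+1}{n} \cdot \frac{\log \mathrm{dist}(f_\omega^{n+1}(x), f_\omega^{n+1}(y))}{n+1}$ and using that $(n+1)/n \to 1$ together with the upper bound $\log \mathrm{dist}(\cdot,\cdot) \leq \log \pi$ (which keeps the second factor bounded from above), the $\varlimsup$ in $n$ coincides with $\varlimsup_m \frac{\log \mathrm{dist}(f_\omega^m(x), f_\omega^m(y))}{m}$. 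Reassembling, I obtain $\lambda_{con}(T\omega, f_\omega(x)) = \lambda_{con}(\omega, x)$.

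For the upper semicontinuity of $x \mapsto \lambda_{con}(\omega, x)$, I fix $\omega$, $x$, and $\varepsilon > 0$, and by definition choose $\delta > 0$ small enough that $\sup_{0 < |y - x| < \delta} \varlimsup_n \frac{\log \mathrm{dist}(f_\omega^n(x), f_\omega^n(y))}{n} \leq \lambda_{con}(\omega, x) + \varepsilon$. Then for any $x'$ with $|x' - x| < \delta/2$, I apply the triangle inequality $\mathrm{dist}(f_\omega^n(x'), f_\omega^n(y)) \leq \mathrm{dist}(f_\omega^n(x), f_\omega^n(x')) + \mathrm{dist}(f_\omega^n(x), f_\omega^n(y))$, which after taking $\log$, dividing by $n$, and passing to $\varlimsup_n$ yields
$$\varlimsup_n \frac{\log \mathrm{dist}(f_\omega^n(x'), f_\omega^n(y))}{n} \leq \max\!\left( \varlimsup_n \frac{\log \mathrm{dist}(f_\omega^n(x), f_\omega^n(x'))}{n},\, \varlimsup_n \frac{\log \mathrm{dist}(f_\omega^n(x), f_\omega^n(y))}{n} \right).$$
Taking next the $\varlimsup_{y \to x'}$, both terms on the right-hand side are bounded by $\lambda_{con}(\omega, x) + \varepsilon$: the first because $x'$ itself lies in the $\delta$-ball around $x$, and the second because for $y$ sufficiently close to $x'$, $y$ also lies in the $\delta$-ball around $x$. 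Hence $\lambda_{con}(\omega, x') \leq \lambda_{con}(\omega, x) + \varepsilon$, which is upper semicontinuity.

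The only delicate point is the upper semicontinuity: the function $x \mapsto \lambda_{con}(\omega, x)$ is \emph{not} simply the $\varlimsup_{y \to x}$ of a fixed function of $y$ (which would trivially be upper semicontinuous), because $x$ appears inside both arguments of the distance. The triangle inequality is precisely what decouples this dependence by anchoring everything to the reference point $x$; once this is done, the rest of the argument is a routine manipulation of $\varlimsup$'s.
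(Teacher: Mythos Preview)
Your proof is correct and follows essentially the same approach as the paper's. The paper's two-sentence sketch phrases the upper semicontinuity in terms of the diameter of $f_\omega^n(I)$ for a small interval $I\ni x$, whereas you unpack this via the triangle inequality directly; the underlying idea is the same, and your version is simply a more explicit rendering of the argument.
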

\begin{proof}
The invariance property $\lambda_{con}\circ \hat{T}= \lambda_{con}$ comes from the fact that an interval $I$ containing $f_0(x)$ is contracted by the sequence $(f_{T\omega}^n)$ if and only if $f_0^{-1}(I)$ is an interval containing $x$ contracted by $(f_\omega^n)$.

The upper semicontinuity of $\lambda_{con}$ comes from the fact that if $\lambda_{con}(\omega,x)<c$, then there exists an interval $I$ containing $x$ such that $\mathrm{diam}(f_\omega^n(I))=O(e^{-nc})$ and hence $\lambda_{con}(\omega,\cdot)<c$ on $I$.
\end{proof}

Then, Proposition \ref{contractuel} is actually a direct consequence of a much more general fact of random dynamical systems:

\begin{lem}\label{randomfunction}
Let $(G,\nu)$ be a RDS on a compact space $X$ and let $(\Omega,\mathbb{P})$ and $\hat{T}$ be defined as in Definition \ref{randomwalk}. Let $\varphi:\Omega\times X\mapsto \R$  be a measurable positive function such that:
\begin{itemize}
\item for every $\omega\in \Omega$, $x\mapsto \varphi(\omega,x)$ is lower semicontinuous,
\item $\varphi\circ \hat{T}\leq \varphi$ on $\Omega\times X$.
\end{itemize}
Finally, let $x_0$ be a point of $X$ and for $\omega$ in $\Omega=G^\N$, let $\Pi_{\omega,x_0}$ be defined as in Proposition \ref{cluster}. Then, for $\mathbb{P}$-almost every $\omega$ in $\Omega$,
$$\varphi(\omega,x_0)\geq \sup_{\mu \in \Pi_{\omega,x_0}} \int_{\Omega\times X} \varphi d(\mathbb{P}\otimes \mu).$$
\end{lem}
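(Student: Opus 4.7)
The plan is to prove that $\varphi(\omega,x_0)\geq \int_{\Omega\times X}\varphi\,d(\mathbb{P}\otimes\mu)$ for every $\mu\in\Pi_\omega$ simultaneously, on a single $\mathbb{P}$-full measure subset of $\Omega$ that does not depend on $\mu$.

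\textbf{Reduction and setup.} Replacing $\varphi$ by $\min(\varphi,M)$ preserves all three hypotheses (since $\min(\varphi,M)\circ\hat{T}=\min(\varphi\circ\hat{T},M)\leq\min(\varphi,M)$, and the minimum of a lower semicontinuous function with a constant remains lower semicontinuous), so it suffices to treat the bounded case and then let $M\to\infty$ along integers, using monotone convergence on both sides. Introduce
$$\bar\varphi(x)\,=\,\int_\Omega\varphi(\omega',x)\,d\mathbb{P}(\omega'),$$
so that Fubini yields $\int_{\Omega\times X}\varphi\,d(\mathbb{P}\otimes\mu)=\int_X\bar\varphi\,d\mu$. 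Fatou applied fiberwise shows that $\bar\varphi$ is itself lower semicontinuous: for $x_n\to x$, $\bar\varphi(x)=\int\varphi(\omega',x)\,d\mathbb{P}\leq\int\varliminf_n\varphi(\omega',x_n)\,d\mathbb{P}\leq\varliminf_n\bar\varphi(x_n)$.

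\textbf{Sub-invariance gives an upper bound.} Iterating $\varphi\circ\hat{T}\leq\varphi$ at $(\omega,x_0)$ yields $\varphi(T^n\omega,f_\omega^n(x_0))\leq\varphi(\omega,x_0)$ for every $n\geq 0$, so the Ces\`aro averages satisfy
$$\frac{1}{N}\sum_{n=0}^{N-1}\varphi(T^n\omega,f_\omega^n(x_0))\,\leq\,\varphi(\omega,x_0)\qquad\text{for every }N\geq 1.\qquad(\dagger)$$

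\textbf{Law of large numbers along the orbit.} The heart of the proof is to establish, for $\mathbb{P}$-a.e. $\omega$,
$$\frac{1}{N}\sum_{n=0}^{N-1}\varphi(T^n\omega,f_\omega^n(x_0))\,-\,\frac{1}{N}\sum_{n=0}^{N-1}\bar\varphi(f_\omega^n(x_0))\,\xrightarrow[N\to\infty]{}\,0.\qquad(\ddagger)$$
The key observation is that $\mathbb{P}=\nu^\N$ is a product measure, so the future $T^n\omega=(f_n,f_{n+1},\dots)$ is independent of the past $\mathcal{F}_n=\sigma(f_0,\dots,f_{n-1})$, while $f_\omega^n(x_0)$ is $\mathcal{F}_n$-measurable. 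Hence $\E[\varphi(T^n\omega,f_\omega^n(x_0))\mid\mathcal{F}_n]=\bar\varphi(f_\omega^n(x_0))$, so the summands $Z_n=\varphi(T^n\omega,f_\omega^n(x_0))-\bar\varphi(f_\omega^n(x_0))$ are bounded by $2M$ and satisfy $\E[Z_n\mid\mathcal{F}_n]=0$. They are \emph{not} martingale differences for $(\mathcal{F}_{n+1})$ since each $Z_n$ depends on the full future. I would get around this via the Doob-type decomposition $Z_n=\sum_{p\geq 1}\xi_n^{(p)}$ with $\xi_n^{(p)}=\E[Z_n\mid\mathcal{F}_{n+p}]-\E[Z_n\mid\mathcal{F}_{n+p-1}]$: for each fixed $p$, the sequence $(\xi_n^{(p)})_n$ is bounded and pairwise orthogonal (since for $n<m$ one has $\xi_n^{(p)}\in\mathcal{F}_{n+p}\subset\mathcal{F}_{m+p-1}$ while $\E[\xi_m^{(p)}\mid\mathcal{F}_{m+p-1}]=0$), so Kolmogorov's strong law for orthogonal sequences gives $\frac{1}{N}\sum_n\xi_n^{(p)}\to 0$ a.s.; the remainder $\sum_{p>P}\xi_n^{(p)}$ is controlled by the uniform bound $\sum_{p\geq 1}\|\xi_n^{(p)}\|_2^2\leq\|Z_n\|_2^2\leq 4M^2$, and a diagonal argument in $P$ yields $(\ddagger)$.

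\textbf{Conclusion.} Fix $\omega$ in the $\mathbb{P}$-full measure set where $(\ddagger)$ holds, and let $\mu\in\Pi_\omega$. Pick $N_k\to\infty$ with $\mu_{N_k}(\omega):=\frac{1}{N_k}\sum_{n=0}^{N_k-1}\delta_{f_\omega^n(x_0)}\to\mu$ in weak-$*$. Then lower semicontinuity of $\bar\varphi$, $(\ddagger)$ and $(\dagger)$ give respectively
$$\int_X\bar\varphi\,d\mu\,\leq\,\varliminf_k\frac{1}{N_k}\sum_{n=0}^{N_k-1}\bar\varphi(f_\omega^n(x_0))\,=\,\varliminf_k\frac{1}{N_k}\sum_{n=0}^{N_k-1}\varphi(T^n\omega,f_\omega^n(x_0))\,\leq\,\varphi(\omega,x_0),$$
and since the exceptional null set does not depend on $\mu$, taking the supremum over $\mu\in\Pi_\omega$ yields the result.

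The main obstacle is the strong law $(\ddagger)$: because $(Z_n)$ is bounded with $\E[Z_n\mid\mathcal{F}_n]=0$ but is $\mathcal{F}_\infty$- rather than $\mathcal{F}_{n+1}$-measurable, the usual martingale SLLN does not apply directly, and one must either perform the Doob decomposition into orthogonal increments at each lag as above, or invoke some alternative mixing/ergodic property of the Bernoulli shift $(T,\mathbb{P})$. Everything else---truncation, lower semicontinuity of $\bar\varphi$, and passage to the limit via weak-$*$ convergence---is essentially soft.
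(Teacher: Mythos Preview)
Your setup (truncation to bounded $\varphi$, the averaged function $\bar\varphi$, its lower semicontinuity via Fatou, and the Portmanteau-type inequality $\int\bar\varphi\,d\mu\le\varliminf_k\int\bar\varphi\,d\mu_{N_k}$) is exactly right, and matches the paper. The divergence is at step $(\ddagger)$.

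\medskip

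\textbf{The gap.} Your proof of $(\ddagger)$ is not complete. For fixed $p$ the sequence $(\xi_n^{(p)})_n$ is indeed orthogonal and an SLLN applies, but the passage from ``$\frac1N\sum_n\xi_n^{(p)}\to0$ for each $p$'' to ``$\frac1N\sum_n Z_n\to0$'' requires controlling the tail $R_n^{(P)}=Z_n-\E[Z_n\mid\mathcal F_{n+P}]$ \emph{uniformly in $n$}, and the bound $\sum_p\|\xi_n^{(p)}\|_2^2\le 4M^2$ does not give that: the $L^2$-norms $\|R_n^{(P)}\|_2$ may fail to be small uniformly in $n$ as $P\to\infty$, so the ``diagonal argument in $P$'' is not justified. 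It is not even clear that $(\ddagger)$ holds in this generality.

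\medskip

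\textbf{What the paper does instead.} The paper observes that $(\ddagger)$ is unnecessary. Rather than averaging the inequality $\varphi(T^n\omega,f_\omega^n(x_0))\le\varphi(\omega,x_0)$ and then trying to replace the left side by its conditional mean, it conditions the inequality itself on $\mathcal F_n$. Since $f_\omega^n(x_0)$ is $\mathcal F_n$-measurable and $T^n\omega$ is independent of $\mathcal F_n$, one gets directly
\[
\bar\varphi\bigl(f_\omega^n(x_0)\bigr)\;=\;\E\bigl[\varphi(T^n\omega,f_\omega^n(x_0))\,\big|\,\mathcal F_n\bigr]\;\le\;\E\bigl[\varphi(\omega,x_0)\,\big|\,\mathcal F_n\bigr]\;=:\;\Lambda_n(\omega).
\]
Now $(\Lambda_n)$ is a bounded martingale converging a.s.\ to $\varphi(\omega,x_0)$, so by Ces\`aro
\[
\varlimsup_{N\to\infty}\frac1N\sum_{n=0}^{N-1}\bar\varphi\bigl(f_\omega^n(x_0)\bigr)\;\le\;\lim_{N\to\infty}\frac1N\sum_{n=0}^{N-1}\Lambda_n(\omega)\;=\;\varphi(\omega,x_0),
\]
and your concluding chain of inequalities goes through with this in place of $(\ddagger)$. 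The martingale $\Lambda_n$ absorbs exactly the ``future dependence'' of $Z_n$ that was causing you trouble; no orthogonal decomposition or SLLN beyond martingale convergence is needed.
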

\begin{proof}
Let $\mathcal{F}_n$ the $\sigma$-algebra generated by the $n$ first canonical projections $p_k:\omega=(f_j)_{j\geq 0}\mapsto f_k$, and set
$$\bar{\varphi}(x)=\E[\varphi(\cdot,x)],$$
$$\Lambda_n=\E[\varphi(\cdot,x_0) | \mathcal{F}_n]$$
Levy's zero-one law says that $\Lambda_n\xrightarrow[n\to +\infty]{} \varphi(\cdot,x_0)$ almost surely. On the other hand, from the inequality 
$$\varphi(\omega,x_0)\geq \varphi\circ \hat{T}^n(\omega,x_0)=\varphi(T^n\omega, f_\omega^n(x_0)),$$
 we deduce by taking the conditional expectation with respect to $\mathcal{F}_n$ that for $\mathbb{P}$-almost every $\omega$, 
$$\Lambda_n(\omega)\geq\bar{\varphi}(f_\omega^n(x_0)).$$
Hence, using the Cesaro theorem, for $\mathbb{P}$-almost every $\omega$, 

\begin{equation}\label{lambda}\varphi(\omega,x_0)=\lim_{N\to +\infty} \frac{1}{N}\sum_{n=0}^{N-1}\Lambda_n(\omega)\geq \varlimsup_{N\to +\infty} \frac{1}{N}\sum_{n=0}^{N-1}\bar{\varphi}(f_\omega^n(x_0))\end{equation}
Now, we know that $\bar{\varphi}$ is lower semicontinuous thanks to the lower semicontinuity of $\varphi(\omega,\cdot)$ and Fatou's lemma: indeed, for any $x$ in $X$,
\begin{equation}\label{fatou}\varliminf_{y\to x} \bar{\varphi}(y)=\varliminf_{y\to x} \E[\varphi(\cdot,y)]\geq
\E[\varliminf_{y\to x} \varphi(\cdot,y)]\geq \bar{\varphi}(x).\end{equation}
As a consequence, we can write:
$$\bar{\varphi}=\inf\{\psi:X\rightarrow \R \mbox{ continuous }| \psi\leq \bar{\varphi}\},$$
and for every such continuous function $\psi\leq \bar{\varphi}$, we have by (\ref{lambda}) and definition of $\Pi_{\omega,x_0}$:
$$\varphi(\omega,x_0)\geq\varlimsup_{N\to +\infty} \frac{1}{N}\sum_{n=0}^{N-1}\psi(f_\omega^n(x_0))=\sup_{\mu\in \Pi_{\omega,x_0}}\int_X \psi d\mu$$
Since $\psi$ is arbitrary, we deduce that
$$\varphi(\omega,x_0)\geq \sup_{\mu\in \Pi_{\omega,x_0}}\int_X \bar{\varphi}\, d\mu=\sup_{\mu\in \Pi_{\omega,x_0}}\int_{\Omega\times X}\varphi\, d\mathbb{P}d\mu.$$
\end{proof}

Let us conclude by deducing the following corollary, which is only a reformulation of Theorem \ref{main} in terms of exponent of contraction:
\begin{cor}
Let $\omega\mapsto (f_\omega^n)_{n\in\N}$ a random walk generated by a probability measure $\nu$ on $\mathrm{Homeo}(S^1)$, and let us assume that there is no probability measure on $S^1$ invariant by $\nu$-almost every homeomorphism. Then there exists $\lambda_0<0$ such that for any $x$ in $S^1$, for  $\mathbb{P}$-almost every $\omega$ in $\Omega$,
$$\lambda_{con}(\omega,x)\leq \lambda_0$$
\end{cor}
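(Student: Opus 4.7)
The plan is to combine the invariance principle (Theorem \ref{invariance}) with Proposition \ref{contractuel}, together with a weak-$*$ compactness argument on the set of stationary measures to obtain a bound independent of $x$.

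First, I would apply Theorem \ref{invariance} to each stationary probability measure. For any stationary $\mu$, $\hat{\mu}=\mathbb{P}\otimes \mu$ is $\hat{T}$-invariant with constant disintegration $\mu_\omega\equiv \mu$. The invariance principle gives the alternative: either $\lambda_{con}(\hat{\mu})<0$, or $(f_\omega)_*\mu=\mu$ for $\mathbb{P}$-almost every $\omega$; the latter says $\mu$ is invariant by $\nu$-almost every homeomorphism, which is forbidden by assumption. Hence $\lambda_{con}(\mathbb{P}\otimes \mu)<0$ for every stationary $\mu$.

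Next, fix $x_0\in S^1$. Proposition \ref{contractuel} guarantees that for $\mathbb{P}$-almost every $\omega$, every element of $\Pi_\omega$ is stationary (and $\Pi_\omega$ is non-empty by weak-$*$ compactness of $\mathcal{P}(S^1)$), and
$$\lambda_{con}(\omega,x_0)\leq \inf_{\mu\in \Pi_\omega}\lambda_{con}(\mathbb{P}\otimes\mu)\leq \sup_{\mu\,\mathrm{stationary}}\lambda_{con}(\mathbb{P}\otimes\mu).$$
It therefore suffices to show that the right-hand side is strictly negative. Set $\bar\lambda(x)=\int_\Omega \lambda_{con}(\omega,x)\,d\mathbb{P}(\omega)$. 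By Lemma \ref{elem}, $\lambda_{con}(\omega,\cdot)$ is upper semicontinuous and bounded above by $0$; applying Fatou's lemma to the non-negative function $-\lambda_{con}(\omega,\cdot)$ shows that $\bar\lambda$ is upper semicontinuous on $S^1$. Consequently $\mu\mapsto \int \bar\lambda\,d\mu=\lambda_{con}(\mathbb{P}\otimes \mu)$ is upper semicontinuous on $\mathcal{P}(S^1)$ for the weak-$*$ topology, and the set of stationary measures is a non-empty weak-$*$ compact subset of $\mathcal{P}(S^1)$. Hence the supremum above is attained, and by the first step it is strictly negative, giving the desired $\lambda_0<0$.

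The main obstacle is this uniformity step: since $\lambda_{con}$ is only upper semicontinuous and possibly $-\infty$-valued, one must carefully propagate the semicontinuity through the fibred integral before invoking weak-$*$ compactness. One cannot shortcut this via Theorem \ref{distribution} (finiteness of ergodic stationary measures), since that result is proved later in the paper using the present corollary.
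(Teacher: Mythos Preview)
Your proof is correct and follows essentially the same route as the paper: apply Theorem \ref{invariance} to rule out the invariant alternative, then feed the resulting negativity into Proposition \ref{contractuel}, and finally use upper semicontinuity of $\bar\lambda(x)=\int_\Omega\lambda_{con}(\omega,x)\,d\mathbb{P}(\omega)$ together with a compactness argument to get the uniform bound $\lambda_0$. The only minor variation is in that last step: the paper first deduces $\bar\lambda(x)<0$ for \emph{every} $x\in S^1$ (via a preliminary application of Proposition \ref{contractuel}) and then invokes compactness of $S^1$, whereas you bypass the pointwise step by using weak-$*$ compactness of the set of stationary measures and upper semicontinuity of $\mu\mapsto\int\bar\lambda\,d\mu$; both variants rest on the same semicontinuity of $\bar\lambda$ and are equally valid.
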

\begin{proof}
By Corollary \ref{invarianceiid}, $\lambda_{con}(\mathbb{P}\otimes \mu)<0$ for any stationary probability measure $\mu$, hence Proposition \ref{contractuel} applied to $-\lambda_{con}$ immediately implies that for any $x$ in $S^1$, $\omega\mapsto \lambda_{con}(\omega,x)$ is negative $\mathbb{P}$-almost everywhere. To obtain a uniform negative bound, let us notice that this negativity implies the negativity of $\overline{\lambda}_{con}(x)=\int_{\Omega}\lambda_{con}(\omega,x)d\mathbb{P}(\omega)$. Thus, $x\mapsto \overline{\lambda}_{con}(x)$ is pointwise negative, and is also upper-semicontinuous by Fatou's lemma as in the computation (\ref{fatou}), hence is uniformly bounded from above by some negative number $\lambda_0$. Then, using Proposition \ref{contractuel} one more time, we obtain that for any $x$ in $S^1$ and $\mathbb{P}$-almost every $\omega$:
$$\lambda_{con}(\omega,x)\leq \inf_{\mu\in \Pi_{\omega,x}}\lambda_{con}(\mathbb{P}\otimes \mu)=\inf_{\mu\in \Pi_{\omega,x}}\int_{S^1}\overline{\lambda}_{con}d\mu\leq \lambda_0.$$
That achieves the proof of the corollary, and hence of Theorem \ref{main}.
\end{proof}

\section{Locally contracting random dynamical systems}\label{sec-consequences}
In this section, we are going to study the properties of a general random walk on a compact metric space. This section can be read independantly of the remainder of the paper, except that we are going to use Lemma \ref{randomfunction} proved in the previous section, and that we will use the notations given in Definition \ref{randomwalk}. Thus, throughout the whole section:
\begin{itemize}
	\item $(G,\nu)$ is a random dynamical system on a compact metric space $(X,d)$, that is to say that $G$ a semigroup of continuous transformations of $X$ and $\nu$ a probability measure on $G$.
	\item $(\Omega,\mathbb{P})=(G^\N,\nu^\N)$ is the associated probability space, $\omega\mapsto (f_\omega^n)$ the associated random walk, defined by
	$$f_\omega^n=f_{n-1}\circ\cdots\circ f_0$$
    (with the implicit notation $\omega=(f_n)_{n\in\N}$), and $\hat{T}:(\omega,x)\mapsto (T\omega,f_0(x))$ the associated skew-shift on $\Omega\times X$.
\end{itemize}
We are going to study the properties of such RDS satisfying the \textit{property of local contractions}:
\begin{ass}\label{contracting}
For every  $x$ in $X$, for $\mathbb{P}$-almost every $\omega$ in $\Omega$, there exists a neighbourhood $B$ of $x$ such that 
$$\mathrm{diam}(f_\omega^n(B))\xrightarrow[n\to+\infty]{} 0.$$
\end{ass}
\begin{rem}
	By Theorem \ref{main}, Assumption \ref{contracting} is satisfied when $G$ is a subgroup of $\mathrm{Homeo}_+(S^1)$ without invariant probability measure. It is also satisfied if $X$ is a manifold, $G$ a semigroup of diffeomorphisms of $X$ and such that all the Lyapunov exponents of the random walk are negative.
\end{rem}

\subsection{Preliminaries on random sets}
In this part, we state some general results on the RDS, concerning the structure of the sets invariant by $\hat{T}$. We do not use Assumption \ref{contracting} in this part.

\begin{prop}\label{randomset}
Let $\mathcal{E}=\cup_{\omega\in \Omega}\{\omega\}\times U(\omega)$ a subset of $\Omega\times X$ backward-invariant by $\hat{T}$ (i.e.~$\hat{T}^{-1}(\mathcal{E})\subset \mathcal{E}$) such that $U(\omega)$ is open in $X$ for every $\omega$ in $\Omega$. Let us assume that $$(\mathbb{P}\otimes \mu)(\mathcal{E})>0$$
 for every stationary ergodic probability measure $\mu$. Then actually,
$$(\mathbb{P}\otimes \mu)(\mathcal{E})=1$$
for every probability measure $\mu$ on $X$ (not necessarily stationary).
\end{prop}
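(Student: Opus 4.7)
The plan has three stages. \textbf{First}, I would upgrade the hypothesis from stationary \emph{ergodic} measures to all stationary measures. Fix a stationary ergodic $\mu$ and set $\hat\mu:=\mathbb{P}\otimes\mu$; then $\hat\mu$ is $\hat T$-invariant and ergodic. The backward-invariance $\hat T^{-1}(\mathcal{E})\subset\mathcal{E}$ combined with the $\hat T$-invariance of $\hat\mu$ forces $\hat\mu(\hat T^{-1}(\mathcal{E}))=\hat\mu(\mathcal{E})$, so $\mathcal{E}$ and $\hat T^{-1}(\mathcal{E})$ coincide modulo $\hat\mu$-null sets. Ergodicity together with the standing hypothesis $\hat\mu(\mathcal{E})>0$ then yields $\hat\mu(\mathcal{E})=1$, and integrating along the ergodic decomposition of stationary measures extends $\mathbb{P}\otimes\mu(\mathcal{E})=1$ to every stationary $\mu$.

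\textbf{Second}, I would analyze the ``success function'' $\phi(x):=\mathbb{P}(\{\omega:x\in U(\omega)\})$. Since each fibre $U(\omega)$ is open, $x\mapsto \mathds{1}_{U(\omega)}(x)$ is lower semicontinuous, and Fatou's lemma transfers this property to $\phi$. From the first stage and Fubini, $\int_X\phi\,d\mu=1$ for every stationary $\mu$; combined with $\phi\leq 1$ this means $\phi=1$ holds $\mu$-almost everywhere. Moreover, backward-invariance rephrases as $f_0^{-1}(U(T\omega))\subset U(\omega)$ for $\mathbb{P}$-a.e.\ $\omega$, and using the independence of $f_0$ and $T\omega$ under $\mathbb{P}$ together with $T_*\mathbb{P}=\mathbb{P}$ one obtains the pointwise superharmonicity
$$\mathcal{L}^*\phi(x):=\int_G\phi(f(x))\,d\nu(f)=\mathbb{P}(\{\omega:f_0(x)\in U(T\omega)\})\leq\phi(x).$$

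\textbf{Third}, I would close with a Krylov--Bogolyubov argument at an arbitrary $x\in X$. The Cesaro averages $\nu_N^x:=\frac{1}{N}\sum_{n=0}^{N-1}\mathcal{L}^n\delta_x$ lie in the weak-$*$ compact space of probability measures on $X$; any cluster point $\mu_x$ is a stationary measure, by the standard Krylov--Bogolyubov argument (which uses only that each $f\in G$ is continuous on $X$, so that $\mathcal{L}^*$ preserves $C(X,\R)$). Iterating the superharmonicity yields $(\mathcal{L}^*)^n\phi\leq\phi$, hence $\int\phi\,d\nu_N^x\leq\phi(x)$ for all $N$. Applying the lower-semicontinuous form of the Portmanteau theorem along a subsequence $\nu_{N_k}^x\to\mu_x$ gives
$$1=\int\phi\,d\mu_x\leq\liminf_{k}\int\phi\,d\nu_{N_k}^x\leq\phi(x),$$
which forces $\phi(x)=1$. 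Hence $\phi\equiv 1$, and for every probability measure $\mu$ on $X$, $\mathbb{P}\otimes\mu(\mathcal{E})=\int_X\phi\,d\mu=1$.

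The only delicate point is keeping the direction of Portmanteau straight: for lower-semicontinuous $\phi$ one must use $\int\phi\,d\mu_x\leq\liminf\int\phi\,d\nu_{N_k}^x$ (not the reverse), and it is precisely this direction that lets the superharmonic bound squeeze out $\phi(x)\geq 1$. Everything else is routine manipulation with invariant sets, ergodic decomposition, and Fubini.
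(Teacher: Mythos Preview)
Your proof is correct. The paper takes a closely related but slightly different route: it applies a general pathwise lemma (Lemma~\ref{randomfunction}) to $\varphi=\mathds{1}_{\mathcal{E}}$, using martingale convergence of $\Lambda_n=\E[\varphi(\cdot,x_0)\mid\mathcal{F}_n]$ to compare $\varphi(\omega,x_0)$ with $\int\varphi\,d(\mathbb{P}\otimes\mu)$ for $\mu$ a cluster value of the \emph{random} empirical measures $\frac{1}{N}\sum_{n<N}\delta_{f_\omega^n(x_0)}$. You instead average over $\omega$ at the outset, reducing to the deterministic function $\phi(x)=\E[\mathds{1}_{\mathcal{E}}(\cdot,x)]$ (this is exactly the paper's $\bar\varphi$), and then run Krylov--Bogolyubov on the transfer operator $P$ rather than pathwise. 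Your preliminary ergodicity step upgrading the hypothesis to $\mathbb{P}\otimes\mu(\mathcal{E})=1$ for all stationary $\mu$ is a clean addition that makes the final squeeze $1=\int\phi\,d\mu_x\leq\phi(x)$ immediate; the paper does not need this because its lemma only requires the integral to be positive to force $\mathds{1}_{\mathcal{E}}(\omega,x_0)>0$. In short, both arguments hinge on the same three ingredients --- lower semicontinuity of the $\omega$-averaged indicator, sub-invariance along trajectories, and a Krylov--Bogolyubov limit being stationary --- but yours is more elementary (no martingales), while the paper's lemma is packaged to apply to any $\hat{T}$-superinvariant l.s.c.\ function on $\Omega\times X$, not just those factoring through $X$.
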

\begin{proof}
Firstly, the set of the stationary probability measures is the convex hull of the the set of the ergodic ones, so that the inequality $(\mathbb{P}\otimes \mu)(\mathcal{E})>0$ remains valid for any $\mu$ stationary. Then, by applying Lemma \ref{randomfunction} to $\varphi=\mathds{1}_{\mathcal{E}}$, for any $x_{0}$ in $X$ and for almost every $\omega$ in $\Omega$, we have with the notations of the lemma:
$$\mathds{1}_{\mathcal{E}}(\omega,x_0)\geq \sup_{\mu \in \Pi_{\omega,x_0}} (\mathbb{P}\times \mu)(\mathcal{E})>0,$$
hence $(\omega,x_0)\in \mathcal{E}$. The result follows.
\end{proof}

The second proposition shows that the fibres of a $\hat{T}$-invariant set cannot have many connected components (that will be the main ingredient for the proof of Theorem \ref{Lejan-Antonov}).
\begin{prop}\label{randomcon}
 Let $\mathcal{E}=\cup_{\omega\in \Omega}\{\omega\}\times E(\omega)$ a subset of $\Omega\times X$ totally invariant by $\hat{T}$ ($\hat{T}^{-1}(\mathcal{E})=\mathcal{E}$). Then, for every stationary ergodic probability measure $\mu$, for $\mathbb{P}$-almost every $\omega$ in $\Omega$, $E(\omega)$ has only a constant finite number $d$ of connected components of $\mu$-measure positive, and all of them have same measure $\frac{1}{d}$.
\end{prop}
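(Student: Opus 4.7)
The plan is to work with the sample-measure decomposition of $\mu$: let $(\mu_\omega)_{\omega\in\Omega}$ be the measurable family of probability measures on $X$ such that $\hat\mu := \int_\Omega \delta_\omega\otimes \mu_\omega\,d\mathbb P(\omega)$ is $\hat T$-invariant, with $\mathbb E[\mu_\omega]=\mu$ and $(f_0)_*\mu_\omega=\mu_{T\omega}$ $\mathbb P$-a.s. Ergodicity of $\mu$ as a stationary measure translates into $\hat T$-ergodicity of $\hat\mu$ (standard, e.g.~Kifer), so in particular the invariance $\hat T^{-1}(\mathcal E)=\mathcal E$ forces $\hat\mu(\mathcal E)\in\{0,1\}$. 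The case $\hat\mu(\mathcal E)=0$ gives $\mu_\omega(E(\omega))=0$ $\mathbb P$-a.s., so $d=0$ and the conclusion is trivial. Thus I assume $\hat\mu(\mathcal E)=1$, which yields $\mu_\omega(E(\omega))=1$ for $\mathbb P$-a.e.\ $\omega$. I interpret ``component of positive $\mu$-measure'' as ``component of positive $\mu_\omega$-measure'', which is the natural reading in the skew-product setting.

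The key tool is an auxiliary function measuring the size of the local component around a typical point. Assuming $E(\omega)$ open (so that its components are open and countably many), define
$$\psi(\omega,x) = \begin{cases} \mu_\omega(K(\omega,x)) & \text{if } x\in E(\omega), \\ 0 & \text{otherwise,}\end{cases}$$
where $K(\omega,x)$ is the connected component of $E(\omega)$ containing $x$. Using the total invariance $f_\omega^{-1}(E(T\omega))=E(\omega)$ and the continuity of $f_\omega$, the set $f_\omega(K(\omega,x))$ is connected, contained in $E(T\omega)$, and contains $f_\omega(x)$, hence lies in $K(T\omega,f_\omega(x))$. By the equivariance $(f_\omega)_*\mu_\omega=\mu_{T\omega}$,
$$\mu_{T\omega}\bigl(K(T\omega,f_\omega(x))\bigr) \;\geq\; \mu_{T\omega}\bigl(f_\omega(K(\omega,x))\bigr) \;\geq\; \mu_\omega(K(\omega,x)),$$
so $\psi\circ\hat T\geq \psi$ pointwise on $\Omega\times X$ (the inequality is trivially preserved off $\mathcal E$, using again the total invariance).

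Now $\hat T$-invariance of $\hat\mu$ gives $\int(\psi\circ\hat T-\psi)\,d\hat\mu=0$, and combined with the non-negative pointwise difference this forces $\psi\circ\hat T=\psi$ $\hat\mu$-a.s. Ergodicity of $\hat\mu$ then implies $\psi$ is a.s.\ constant, equal to some $c$; since $\psi>0$ on $\mathcal E$ and $\hat\mu(\mathcal E)=1$, we have $c>0$. Fubini yields: for $\mathbb P$-a.e.\ $\omega$, every $\mu_\omega$-positive connected component of $E(\omega)$ has $\mu_\omega$-mass exactly $c$. Since these components are disjoint in $X$ and their $\mu_\omega$-masses sum to $\mu_\omega(E(\omega))=1$, their number is the constant integer $d:=1/c<\infty$.

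The main obstacle is finiteness. The naive argument — define $N(\omega)$ as the number of $\mu_\omega$-positive components of $E(\omega)$, check via the component-tracking map $\Phi_\omega$ (surjective onto $\mu_{T\omega}$-positive components by the mass-balance $\mu_\omega(f_\omega^{-1}(D))=\mu_{T\omega}(D)$) that $N(T\omega)\leq N(\omega)$, then invoke Bernoulli-shift ergodicity — only gives that $N$ is a.s.\ constant in $\mathbb N\cup\{\infty\}$; it does not exclude the value $\infty$. Ruling that out is precisely what the $\psi$-argument achieves, by exploiting the full $\hat T$-invariance and ergodicity of $\hat\mu$ rather than merely that of the base shift. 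A secondary technical point is measurability: one must check that connected components of $E(\omega)$ vary measurably in $\omega$ and that $\psi$ is Borel, which is routine when $E(\omega)$ is open and depends measurably on $\omega$.
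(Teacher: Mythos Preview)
Your overall strategy coincides with the paper's: introduce the function $(\omega,x)\mapsto(\text{measure of the component of }x\text{ in }E(\omega))$, show it is $\hat T$-invariant, invoke ergodicity of the lifted measure to conclude it is a.s.\ equal to a constant $c>0$, and deduce there are exactly $1/c$ components of positive measure. However, your execution has two linked gaps.

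First, the equivariant family $(\mu_\omega)$ you postulate, satisfying $(f_0)_*\mu_\omega=\mu_{T\omega}$, does \emph{not} exist on the one-sided space $\Omega=G^{\N}$ that is fixed at the start of Section~\ref{sec-consequences}. On $G^{\N}$ the $\hat T$-invariant lift of $\mu$ is simply the product $\mathbb P\otimes\mu$, for which the fibre measure is the constant $\mu$, and then your inequality $\mu(K(T\omega,f_0(x)))\geq\mu(K(\omega,x))$ fails in general, since $\mu(f_0(K))\neq\mu(K)$ unless $\mu$ is $f_0$-invariant. The sample measures with the equivariance property live on the two-sided extension $\tilde\Omega=G^{\Z}$, where $\mu_\omega$ depends only on the past coordinates; the paper makes this passage explicit before running the argument.

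Second, and more importantly, you change the statement: the proposition asks for components of positive $\mu$-measure, and you prove (modulo the first point) a statement about components of positive $\mu_\omega$-measure. These are not the same a priori. The paper bridges them precisely by exploiting the product structure on $G^{\Z}$: since $\mu_\omega$ is measurable with respect to the past and the component $K(\omega,x)$ depends only on the future and on $x$, one integrates the almost-sure identity $\mu_\omega(K(\omega,x))=c$ over the past coordinates to obtain $\mu(K(\omega,x))=c$ for $\mathbb P\otimes\mu$-a.e.\ $(\omega,x)$ on the one-sided space. This independence step is the content of the paper's auxiliary lemma, and it is exactly what your ``reinterpretation'' skips. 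Once you insert the two-sided extension and this integration step, your argument becomes the paper's.
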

\begin{proof}
In order to prove this proposition, we extend (canonically) the skew-shift $\hat{T}$ on $G^\Z\times X$, in an invertible context, allowing to look at the ''past'' of the RDS. This procedure is standard, we resume in the following lemma the properties of the extension we use (we refer to \cite{Lejan} for the details).
\begin{lem}
Let $\tilde{\Omega}=G^\Z$ and $\tilde{\mathbb{P}}=\nu^{\Z}$ . The transformation $\hat{T}:(\omega,x)\mapsto (T\omega, f_0(x))$ admits an invariant ergodic probability measure $\hat{\mu}$  on $\tilde{\Omega}\times X$ of the form $d{\hat{\mu}}=d\mu_\omega(x)d\mathbb{P}(\omega)$, with:
\begin{itemize}
\item  the function $\omega\mapsto \mu_\omega$ depending only on the negative coordinates of $\omega$,
\item  $\int_{\tilde{\Omega}} \mu_\omega d\tilde{\mathbb{P}}(\omega)=\mu$,
\item for $\tilde{\mathbb{P}}$-almost every $\omega$ in $\tilde{\Omega}$, $\mu_{T\omega}=(f_0)_*\mu_\omega$.
\end{itemize}
\end{lem}

This process will allow us to prove the following general lemma:
\begin{lem}\label{ergodic}
Let $(E(\omega,x))_{(\omega,x)\in\Omega\times X}$ be a family of Borelian subsets of $X$ such that 
$$\forall (\omega,x)\in \Omega\times X,\quad E(\hat{T}(\omega,x))=f_0(E(\omega,x)).$$
Then the function $(\omega,x)\mapsto \mu(E(\omega,x))$ is constant $(\mathbb{P}\otimes \mu)$-almost everywhere.
\end{lem}

\begin{proof}
Let us extend canonically $(\omega,x)\mapsto E(\omega,x)$ to $\tilde{\Omega}\times X$ (by setting $E((f_k)_{k\in\Z},x):=E((f_k)_{k\in\N},x)$). For every $(\omega,x)\in \tilde{\Omega}\times X$, $E(\omega,x)=(f_0)^{-1}(E(\hat{T}(\omega,x)))$, hence
$$\mu_\omega(E(\omega,x))=(f_0)_*\mu_\omega (E(\hat{T}(\omega,x)))=\mu_{T\omega} (E(\hat{T}(\omega,x))).$$
The function $(\omega,x)\mapsto \mu_\omega(E(\omega,x))$ is hence $\hat{T}$-invariant on $\tilde{\Omega}\times X$.
By ergodicity of $\hat{\mu}$, there exists a constant $c$ such that for $\hat{\mu}$-almost every $(\omega,x)$ in $\tilde{\Omega}\times X$, $\mu_\omega(E(\omega,x))=c$. Since $\mu_{\omega}$ only depends  on the negative coordinates of $\omega$ and $E(\omega,x)$ only depends  on the non negative coordinates of $\omega$, we deduce by integration of this equality over the negative coordinates of $\omega$ that for $(\mathbb{P}\otimes \mu)$-almost every $(\omega,x)$ in $\Omega\times X$, $\mu(E(\omega,x))=c$.
\end{proof}
Proposition \ref{randomcon} follows by choosing $E(\omega,x)$ to be the connected component of $x$ in $E(\omega)$ (with the convention $E(\omega,x)=\emptyset$ if $x\notin E(\omega)$), satisfying the relation $E(\hat{T}(\omega,x))=f_0(E(\omega,x))$. For any ergodic probability measure $\mu$ of the RDS, by Lemma \ref{ergodic}, for $\mathbb{P}$-almost every $\omega$, the function $x\mapsto \mu(E(\omega,x))$ is equal to some positive constant $c$ $\mu$-almost everywhere, which means that all the connected components of $U(\omega)$ which are not $\mu$-negligible have the same $\mu$-measure $c$. In particular there is only a finite number of them, namely $\frac{1}{c}$.
\end{proof}

\subsection{Stationary trajectories}
We prove in this part that the property of local contractions implies that the number of ergodic stationary probability measures is finite, and the trajectory of every point almost surely distributes with respect of one of them.\\

\begin{Def}\label{contracdef}
We say that a ball $B$ is contractible if there exists a set $\Omega '\subset \Omega$ of $\mathbb{P}$-positive probability such that, for $\omega$ in $\Omega '$, $\mathrm{diam}(f_\omega^n(B))\xrightarrow[n\to +\infty]{}0$.	
\end{Def}
Assumption \ref{contracting} implies that every point contains a contractible neighbourhood.
\begin{lem}\label{contractible}
If $B\subset X$ is a contractible ball, then there exists at most one ergodic stationary probability measure $\mu$ such that $\mu(B)>0$.
\end{lem}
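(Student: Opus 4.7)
The plan is a proof by contradiction: suppose $\mu_1$ and $\mu_2$ are two distinct ergodic stationary probability measures, both giving positive measure to the contractible ball $B$. I would aim to produce a single trajectory that is simultaneously $\mu_1$-generic and $\mu_2$-generic in the sense of empirical distribution, which forces $\mu_1 = \mu_2$.

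First, I would use the standard correspondence between ergodic stationary measures and ergodic $\hat{T}$-invariant measures on $\Omega\times X$ of product form: $\mu_i$ ergodic stationary is equivalent to $\mathbb{P}\otimes \mu_i$ being ergodic for the skew-product $\hat{T}$. Applying Birkhoff's ergodic theorem to continuous test functions $\varphi(\omega,x)=\psi(x)$ with $\psi\in C(X,\mathbb{R})$, I obtain a Borel set $A_i\subset \Omega\times X$ of full $\mathbb{P}\otimes\mu_i$-measure such that for every $(\omega,x)\in A_i$, the empirical measure $\frac{1}{N}\sum_{n=0}^{N-1}\delta_{f_\omega^n(x)}$ converges weakly-$*$ to $\mu_i$. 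By Fubini, the set of $x\in X$ such that the slice $A_i^x=\{\omega:(\omega,x)\in A_i\}$ has full $\mathbb{P}$-measure is of full $\mu_i$-measure; in particular since $\mu_i(B)>0$, I can choose a point $x_i\in B$ with $\mathbb{P}(A_i^{x_i})=1$.

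Next, I bring in the contractibility of $B$: by hypothesis there exists $\Omega'\subset\Omega$ with $\mathbb{P}(\Omega')>0$ such that $\mathrm{diam}(f_\omega^n(B))\to 0$ for every $\omega\in\Omega'$. Since $A_1^{x_1}$ and $A_2^{x_2}$ each have full $\mathbb{P}$-measure and $\mathbb{P}(\Omega')>0$, the intersection $A_1^{x_1}\cap A_2^{x_2}\cap \Omega'$ has positive probability; in particular it is non-empty. Pick one such $\omega$.

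For this single $\omega$, the trajectories of $x_1$ and $x_2$ satisfy $d(f_\omega^n(x_1),f_\omega^n(x_2))\to 0$ because both points lie in $B$ and $\omega\in\Omega'$. By uniform continuity of any $\psi\in C(X,\mathbb{R})$ on the compact metric space $X$, this implies
\[
\frac{1}{N}\sum_{n=0}^{N-1}\bigl(\psi(f_\omega^n(x_1))-\psi(f_\omega^n(x_2))\bigr)\xrightarrow[N\to+\infty]{} 0.
\]
But $\omega\in A_i^{x_i}$ forces the two Cesàro averages to tend to $\int\psi\,d\mu_1$ and $\int\psi\,d\mu_2$ respectively. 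Therefore $\int\psi\,d\mu_1=\int\psi\,d\mu_2$ for every continuous $\psi$, which gives $\mu_1=\mu_2$, the desired contradiction. The only point requiring care is the intersection argument in the third step, where I combine two full-probability events with one positive-probability event; everything else is a direct application of Birkhoff's theorem together with the contractivity hypothesis, so I do not anticipate a genuine obstacle.
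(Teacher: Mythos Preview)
Your argument is correct and essentially identical to the paper's proof: pick Birkhoff-generic points $x_1,x_2\in B$ for $\mu_1,\mu_2$ (via the ergodicity of $\mathbb{P}\otimes\mu_i$ and Fubini), intersect the two full-measure sets of good $\omega$'s with the positive-measure contracting set $\Omega'$, and conclude $\mu_1=\mu_2$ from the synchronization of the two trajectories. The only cosmetic difference is that you phrase it as a contradiction while the paper directly shows any two such measures coincide.
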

\begin{proof}
Let $\mu_1$ and $\mu_2$ be two ergodic stationary measures such that $\mu_1(B)\not=0$ and $\mu_2(B)\not=0$. By Birkhoff's theorem one can find $x$ and $y$ in $B$ such that for $\mathbb{P}$-almost every $\omega$ in $\Omega$ , for every continuous $\varphi:X\rightarrow \R$,
\begin{equation}\label{limit123}\begin{disarray}{l}\frac{1}{N}\sum_{n=0}^{N-1}\varphi(f_\omega^n(x))\xrightarrow[N\to+\infty]~\int_{X}\varphi d\mu_1\\
\frac{1}{N}\sum_{n=0}^{N-1}\varphi(f_\omega^n(y))\xrightarrow[N\to+\infty]~\int_{X}\varphi d\mu_2.\end{disarray}\end{equation}
Since $B$ is contractible, one can choose such an $\omega$ for which $\mathrm{diam}(f_\omega^n(B))$ tends to $0$ as $n$ tends to $+\infty$. Then, for every continuous mapping $\varphi:X\rightarrow \R$, $\varphi(f_\omega^n(x))-\varphi(f_\omega^n(y))$ tends to $0$ as $n$ tends to $+\infty$, hence we conclude from (\ref{limit123}) that $$\int_{X}\varphi d \mu_1=\int_{X}\varphi d \mu_2,$$
so that $\mu_1=\mu_2$.
\end{proof}

\begin{prop}\label{finite}
 If the RDS $(G,\nu)$ satisfies Assumption \ref{contracting}, then it has a finite number $d$ of ergodic stationary probability measures $\{\mu_1,\ldots,\mu_d\}$. Their respective topological supports $F_1,\ldots,F_d$ are pairwise disjoints, and are exactly the minimal invariant compacts of $G$.
\end{prop}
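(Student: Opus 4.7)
The plan is to combine Lemma \ref{contractible} with compactness to bound the number of ergodic stationary measures, then use pairwise disjointness of supports to force minimality. I would proceed in four steps.

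First, I would establish that every point admits a contractible neighborhood in the sense defined just before Lemma \ref{contractible}. This is a routine consequence of Assumption \ref{contracting} together with a countable basis argument: fixing a decreasing basis of neighborhoods $B_1 \supseteq B_2 \supseteq \cdots$ of $x$, almost every $\omega$ contracts some $B_k$, so by countable subadditivity some $B_k$ is contracted on a set of positive $\mathbb{P}$-measure. Then by compactness of $X$, finitely many contractible open sets $B_1,\dots,B_N$ cover $X$. Any ergodic stationary probability measure $\mu$ must give positive mass to at least one $B_j$, and by Lemma \ref{contractible} at most one ergodic stationary measure can have positive mass on a given $B_j$. This bounds the number of ergodic stationary measures by $N$; call them $\mu_1,\dots,\mu_d$ and let $F_i=\mathrm{supp}(\mu_i)$.

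Second, I would prove the $F_i$ are pairwise disjoint: if $x\in F_i\cap F_j$ with $i\neq j$, any contractible neighborhood $B$ of $x$ satisfies $\mu_i(B)>0$ and $\mu_j(B)>0$, contradicting Lemma \ref{contractible} directly.

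Third, I would show each $F_i$ is $G$-invariant. Starting from stationarity $\mu_i=\int g_*\mu_i\,d\nu(g)$, one has $\mu_i(g^{-1}(X\setminus F_i))=0$ for $\nu$-a.e.\ $g$; since $g$ is continuous the set $g^{-1}(X\setminus F_i)$ is open, and an open subset of $X$ disjoint from $\mathrm{supp}(\mu_i)$ up to a $\mu_i$-null set must actually be disjoint from $F_i$. Hence $g(F_i)\subseteq F_i$ for $\nu$-a.e.\ $g$, and by continuity this passes to $\mathrm{supp}(\nu)$, to the semigroup it generates, and finally to all of $G$ by density (non-degeneracy of $\nu$).

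Fourth, I would deduce that each $F_i$ is a minimal $G$-invariant compact, and that every minimal invariant compact is some $F_i$. If $F\subsetneq F_i$ were closed and $G$-invariant, Krylov--Bogolyubov applied to the RDS restricted to the compact invariant set $F$ yields a stationary probability measure supported in $F$, and any ergodic component of it is some $\mu_j$ by Step~1, giving $F_j\subseteq F\subsetneq F_i$; this forces $F_i\cap F_j\neq\emptyset$, hence $j=i$ by Step~2, a contradiction. Conversely, any minimal $G$-invariant compact $M$ carries a stationary probability measure, and by minimality the support of any of its ergodic components must be all of $M$, so $M=F_j$ for some $j$. The only delicate points are Steps~1 and~3; everything else is a short chase through the disjointness of supports once Lemma \ref{contractible} is in hand.
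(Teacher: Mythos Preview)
Your proof is correct and follows essentially the same route as the paper: compactness plus Lemma~\ref{contractible} for finiteness, the same lemma again for disjointness of supports, and then a short minimality argument using disjointness. The only difference is cosmetic: you spell out in Step~3 why $\mathrm{supp}(\mu_i)$ is $G$-invariant (the paper simply asserts this), and you argue minimality of $F_i$ directly rather than first characterising minimal sets and then invoking that characterisation; the logical content is the same.
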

\begin{proof}
Each point of $x$ is the centre of a contractible ball, hence by compactness, we can cover $X$ by a finite number of contractible balls $B_1,\ldots,B_d$. By  Lemma \ref{contractible}, for each $i$, there is at most one ergodic probability measure $\mu$ such that $\mu(B_i)\not=0$. Hence, there are at most $d$ stationary ergodic probability measures.\\

Let $\{\mu_1,\ldots,\mu_d\}$ be the set of the ergodic probability measures and let $F_i=\mathrm{supp}(\mu_i)$ be the topological support of $\mu_i$. If $x\in F_i\cap F_j$, then if $B$ a contractible ball centred at $x$, we have $\mu_i(B)\not=0$ and $\mu_j(B)\not=0$, hence by Lemma \ref{contractible}, $\mu_i=\mu_j$. The sets $F_1,\ldots,F_d$ are hence pairwise disjoint.\\

If $F$ is a minimal closed invariant subset of $X$, then there exists a stationary ergodic probability measure $\mu_i$ supported in $F$. And since $F_i=\mathrm{supp}(\mu_i)$ is invariant by $G$, we have $F=F_i$ by minimality of $F$.

Conversely, let $i$ be in $\{1,\ldots,d\}$. The closed set $F_i=\mathrm{supp}(\mu_i)$ is invariant by $G$, hence it contains a minimal invariant closed subset $F$. By the previous point, $F=F_j$ for some $j$, but since the $F_1,\ldots,F_d$ are pairwise disjoint, necessarily $i=j$ and hence $F_i=F$ is a minimal invariant subset.\\
\end{proof}

\begin{prop}\label{adhe}
Let us assume that the RDS $(G,\nu)$ satisfies Assumption \ref{contracting} and let $\mu_1,\ldots,\mu_d$ and $F_1,\ldots,F_d$ be as in Proposition \ref{finite}. Then for every $x$ in $X$, for $\mathbb{P}$-almost every $\omega$ in $\Omega$, there exists a (unique) integer $i=i(\omega,x)$ in $\{1,\ldots,d\}$ such that:
\begin{itemize}
\item The set of cluster values of the sequence $(f_\omega^n(x))_{n\geq 0}$ is exactly $F_i$.

\item The sequence of probability measures $\frac{1}{N}\sum_{n=0}^{N-1}\delta_{f_\omega^n(x)}$ weakly-$*$ converges to $\mu_i$ in $C(X,\R)^*$.
\end{itemize}
\end{prop}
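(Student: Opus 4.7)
The plan is to apply Proposition \ref{randomset} to sets encoding absorption of trajectories into a single $F_i$, after packaging the contractibility of Assumption \ref{contracting} into the definition in order to force the fibres to be open.

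For each $i \in \{1, \ldots, d\}$, let
$$\hat G_i = \bigl\{ (\omega, x) \in \Omega \times X : \exists\, U \ni x \text{ open with } \mathrm{diam}(f_\omega^n(U)) \to 0 \text{ and } \mathrm{dist}(f_\omega^n(U), F_i) \to 0 \bigr\}.$$
By construction each fibre of $\hat G_i$ is open in $X$. Moreover $\hat G_i$ is backward $\hat T$-invariant: if an open $U' \ni f_0(x)$ witnesses $(T\omega, f_0(x)) \in \hat G_i$, then $U := f_0^{-1}(U')$ is open, contains $x$, and the inclusion $f_\omega^n(U) \subset f_{T\omega}^{n-1}(U')$ transports both vanishing conditions to $(\omega, x)$ -- no surjectivity of $f_0$ is required. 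To verify $\mathbb{P} \otimes \mu_i(\hat G_i) = 1$, pick $\mu_i$-a.e.\ $y$, so $y \in F_i$; by Proposition \ref{finite} the compact $F_i$ is $G$-invariant, hence $f_\omega^n(y) \in F_i$ for every $\omega, n$, and Assumption \ref{contracting} provides for $\mathbb{P}$-a.e.\ $\omega$ an open neighbourhood $U$ of $y$ with $\mathrm{diam}(f_\omega^n(U)) \to 0$. This neighbourhood then satisfies $\mathrm{dist}(f_\omega^n(U), F_i) \leq \mathrm{diam}(f_\omega^n(U)) \to 0$, so $(\omega, y) \in \hat G_i$.

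Since the only ergodic stationary probability measures are $\mu_1, \ldots, \mu_d$ (Proposition \ref{finite}), Proposition \ref{randomset} applied to $\bigcup_i \hat G_i$ yields $\mathbb{P} \otimes \delta_x(\bigcup_i \hat G_i) = 1$ for every $x \in X$. Thus for every $x$, $\mathbb{P}$-almost surely there is an index $i = i(\omega, x)$ with $\mathrm{dist}(f_\omega^n(x), F_i) \to 0$; uniqueness of $i$ follows from the compacts $F_j$ being pairwise disjoint, hence at positive mutual distance. Both bullets are now immediate. The inclusion of cluster values into $F_i$ follows from the vanishing distance, while any weak-$*$ cluster value of the empirical measures is stationary (by the Krylov--Bogolyubov argument recalled after Proposition \ref{contractuel}), hence by Proposition \ref{finite} a convex combination $\sum_j \alpha_j \mu_j$ whose support is confined to $F_i$; disjointness of the $F_j$'s then forces $\alpha_j = 0$ for $j \neq i$, giving weak-$*$ convergence to $\mu_i$. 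Finally $F_i$ is contained in the set of cluster values because $\mu_i$ has full support on $F_i$, so every open set meeting $F_i$ is visited with positive asymptotic density by the trajectory.

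The main obstacle that this design sidesteps is that the naive invariant set $\{(\omega, x) : \mathrm{dist}(f_\omega^n(x), F_i) \to 0\}$ has merely Borel fibres, which would preclude a direct application of Proposition \ref{randomset}; attaching to each $(\omega, x)$ the witness $U$ coming from Assumption \ref{contracting} is precisely the device that makes the fibres open while still preserving backward $\hat T$-invariance.
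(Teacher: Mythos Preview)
Your proof is correct and follows essentially the same approach as the paper: both apply Proposition~\ref{randomset} to a backward-invariant set whose fibres are made open by building in the contracting neighbourhood from Assumption~\ref{contracting}. The only organizational difference is that the paper packages the two conclusions directly into its sets $\mathcal{E}$ and $\tilde{\mathcal{E}}$ (checking positive ergodic measure via Birkhoff points), whereas you encode the simpler absorption condition $\mathrm{dist}(f_\omega^n(U),F_i)\to 0$ and then recover the two bullets afterwards via Krylov--Bogolyubov and the disjointness of the $F_j$.
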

\begin{proof}
Let us consider $\mathcal{E}_0$ to be the set of the points $(\omega,x)$ such that there exists a neighbourhood of $x$ contracted by $(f_\omega^n)_n$, and
let
$$\mathcal{E}_i=\left\{(\omega,x)\in \mathcal{E}_0 \left| \frac{1}{N}\sum_{n=0}^{N-1}\delta_{f_\omega^n(x)}\xrightarrow[n\to+\infty]{C(X,\R)^*} \mu_i \right.\right\}=\bigcup_{\omega\in\Omega}\{\omega\}\times U_i(\omega)$$
and 
$$\tilde{\mathcal{E}}_i=\left\{(\omega,x)\in \mathcal{E}_0 \left| \mbox{Acc}\left\{f_\omega^n(x),n\in\N\right\}=F_i \right.\right\}=\bigcup_{\omega\in\Omega}\{\omega\}\times \tilde{U}_i(\omega).$$
Then:
\begin{itemize}
	\item $\mathcal{E}_i$ and $\tilde{\mathcal{E}}_i$ are totally invariant by $\hat{T}$.
	\item If $\omega$ belongs to $\omega$ and $B$ is a ball such that $\mbox{diam}(f_\omega^n(B))$ tends to $0$ when $n$ tends to $+\infty$, then either $B\cap U_i(\omega)=\emptyset$ (resp $B\cap \tilde{U}_i(\omega)=\emptyset$) or $B\subset U_i(\omega)$ (resp $B\subset \tilde{U}_i(\omega)$). In consequence, $U_i(\omega)$ and $\tilde{U}_i(\omega)$ are open.
	\item By Birkhoff Theorem, for $(\mathbb{P}\otimes \mu_i$)-almost every $(\omega,x)$ in $\Omega\times X$,
	$$\frac{1}{N}\sum_{n=0}^{N-1}\delta_{f_\omega^n(x)}\xrightarrow[n\to+\infty]{C(X,\R)^*} \mu_i.$$
	In particular, $\mbox{Acc}\left\{f_\omega^n(x),n\in\N\right\}\supset F_i$, and if $x$ belongs to $F_i$, $\mbox{Acc}\left\{f_\omega^n(x),n\in\N\right\}\subset F_i$ by invariance of $F_i$. Thus, $(\mathbb{P}\otimes \mu_i)(\mathcal{E}_i)=(\mathbb{P}\otimes \mu_i)(\tilde{\mathcal{E}}_i)=1$.
\end{itemize}
In consequence, one can apply Proposition \ref{randomset} to the sets $\mathcal{E}=\cup_i \mathcal{E}_i$ and $\tilde{\mathcal{E}}=\cup_i \tilde{\mathcal{E}}_i$ and get:
$$\forall x\in X, \quad (\mathbb{P}\times \delta_x)(\mathcal{E})=(\mathbb{P}\times \delta_x)(\tilde{\mathcal{E}})=1.$$
The claimed result follows.
\end{proof}

\subsection{Dynamics of the transfer operator}
We study in this part the sequence of the iterates of the transfer operator $P$ of a RDS applied to a continuous test function $\varphi$. We prove that under the property of local contractions, this sequence $(P^n\varphi)_{n\in\N}$ always converges uniformly in the Cesaro sense to a harmonic function, and that it actually converges uniformly in the standard sense if the RDS is aperiodic (in the sense of Definition \ref{indeco}).\\

The transfer operator $P$ of the system is defined on measurable bounded functions $\varphi:X\rightarrow \R$, by
$$P\varphi=\int_{G} \varphi\circ f d\nu(f).$$ 
The iterates of $P$ are given by 
$$P^n\varphi=\int_\Omega\varphi\circ f_\omega^n\,d\mathbb{P}(\omega),$$
so that the dynamics of $P$ represents the evolution of the law of the random variables $\omega\mapsto f_\omega^n(x)$.

\begin{lem}\label{equicontinuous}
If the RDS $(G,\nu)$ satisfies Assumption \ref{contracting}, then for every continuous $\varphi:X\rightarrow \R$, the family $(P^n\varphi)_{n\in\N}$ is equicontinuous on $X$.
\end{lem}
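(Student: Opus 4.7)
The plan is to prove pointwise equicontinuity: fix $x_0\in X$ and, for every $\ep>0$, produce $\delta>0$ such that $|P^n\varphi(y)-P^n\varphi(x_0)|<\ep$ for every $y\in B(x_0,\delta)$ and every $n\in\N$. The tools are Assumption \ref{contracting} (local contraction at every point) and compactness of $X$ (which makes $\varphi$ uniformly continuous). Start by choosing $\eta>0$ so that $d(u,v)\le\eta$ implies $|\varphi(u)-\varphi(v)|<\ep/2$.

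The first substantive step is to produce, for each typical $\omega$, a neighborhood of $x_0$ whose entire forward orbit under $(f_\omega^n)_{n\ge 0}$ stays of diameter $\le\eta$. Define
$$r(\omega)=\sup\bigl\{r>0 \,:\, \sup_{n\in\N}\mathrm{diam}(f_\omega^n(B(x_0,r)))\le\eta\bigr\},$$
which is measurable. I would check that $r(\omega)>0$ for $\mathbb{P}$-almost every $\omega$ as follows: Assumption \ref{contracting} yields a neighborhood $V\ni x_0$ with $\mathrm{diam}(f_\omega^n(V))\to 0$, so some $N_\omega$ makes this diameter $\le\eta$ for all $n\ge N_\omega$; for the \emph{finitely many} indices $n<N_\omega$, continuity of each $f_\omega^n$ at $x_0$ provides a ball on which $f_\omega^n$ has oscillation below $\eta$. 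Intersecting these ${N_\omega+1}$ balls produces a single open ball around $x_0$ valid for every $n$, hence $r(\omega)>0$.

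The second step is a standard truncation: since $r(\omega)>0$ almost surely, pick $r_0>0$ with $\mathbb{P}(\{r(\omega)\ge r_0\})\ge 1-\ep/(4\|\varphi\|_\infty+1)$. For $y\in B(x_0,r_0)$, bound
$$|P^n\varphi(y)-P^n\varphi(x_0)|\le\int_\Omega |\varphi(f_\omega^n(y))-\varphi(f_\omega^n(x_0))|\,d\mathbb{P}(\omega)$$
and split the integral along $\{r(\omega)\ge r_0\}$ and its complement. On the good event, $y\in B(x_0,r_0)\subset B(x_0,r(\omega))$, so $d(f_\omega^n(y),f_\omega^n(x_0))\le\eta$ uniformly in $n$ and the integrand is $<\ep/2$; on the bad event we brutally bound the integrand by $2\|\varphi\|_\infty$ and invoke the small probability. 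Adding the two contributions yields a bound $<\ep$ uniform in $n$, which is the desired equicontinuity at $x_0$. As $x_0$ is arbitrary, pointwise equicontinuity on the compact $X$ automatically upgrades to uniform equicontinuity.

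The main obstacle I anticipate is the first step: Assumption \ref{contracting} only guarantees asymptotic contraction, whereas I need a radius $r(\omega)>0$ for which the orbit stays small at \emph{every} finite time. The fix is the head/tail decomposition above — contraction handles all but finitely many iterates, and continuity of the initial finitely many $f_\omega^n$ takes care of the rest. Once $r(\omega)$ is measurable and a.s.\ positive, the rest is routine probabilistic truncation.
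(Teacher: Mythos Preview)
Your proof is correct and follows essentially the same strategy as the paper: fix a point, use uniform continuity of $\varphi$ to choose a threshold $\eta$, exhibit for most $\omega$ a ball whose forward images all stay of diameter $\le\eta$, and then split the integral defining $P^n\varphi$ over a good event of probability $\ge 1-\ep$ and its complement. The paper simply asserts the existence of such a ball $B$ and high-probability set $\Omega'$ in one line, whereas you supply the head/tail justification (asymptotic contraction handles large $n$, continuity of the finitely many initial iterates handles small $n$) that makes this assertion rigorous; in that sense your write-up is slightly more complete than the paper's.
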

\begin{proof}
Let $\ep>0$, and let $\delta>0$ be such that
 $$\forall x,y \in X^2, d(x,y)\leq\delta\Rightarrow |\varphi(x)-\varphi(y)|\leq \ep.$$
   Let $x$ be in $X$. Thanks to Assumption \ref{contracting}, we can find a ball $B$ centred at $x$ and a subset $\Omega '\subset \Omega$ of probability more than $1-\ep$ such that:
$$ \forall n\in \N, \forall \omega\in \Omega',\quad \mathrm{diam}( f_\omega^n(B))\leq \delta.$$
We deduce that for every integer $n$ and every $y$ in $B$:
$$\begin{disarray}{ll}\displaystyle|P^n\varphi(x)-P^n\varphi(y)|&\leq\int_{\Omega}|\varphi(f_\omega^n(x))-\varphi(f_\omega^n(y))|d\mathbb{P}(\omega)
\\&\leq \ep\mathbb{P}(\Omega')+2\|\varphi\|_{\infty}\mathbb{P}(\Omega-\Omega')\\
&\leq (1+2\|\varphi\|_\infty)\ep.\end{disarray}
$$
Thus, $(P^n\varphi)_{n\in\N}$ is equicontinuous at $x$. Since $x$ is arbitrary and $X$ is compact, $(P^n\varphi)_{n\in\N}$ is equicontinuous on $X$.
\end{proof}

\begin{prop}\label{Furstenberg}
We assume that the RDS $(G,\nu)$ satisfies Assumption \ref{contracting}, and we keep the notations of Proposition \ref{finite}, i.e. $\mu_1,\ldots,\mu_d$ and $F_1,\ldots,F_d$ are respectively the ergodic stationary probability measures of the RDS and their topological supports. Then:
\begin{itemize}
\item The vector space $E_0=\{\varphi\in C(X,\R)\mid P\varphi=\varphi\}$ of the harmonic continuous functions of the RDS has finite dimension $d$, and one can find a basis $(u_1,\ldots,u_d)$ of $E_0$ such that $u_i$ is valued in $[0,1]$, $u_i=\delta_{i,j}$ on $F_j$ and $\sum_i u_i=1$ on $X$.

\item For every continuous $\varphi:X\rightarrow \R$, we have 
$$ \frac{1}{N}\sum_{n=0}^{N-1} P^n\varphi\xrightarrow[N\to +\infty]{\|\cdot\|_\infty} \psi$$ 
where $\psi$ is the element of $E_0$ given by 
$$\psi(x)=\sum_{i=1}^d \left(\int_X\varphi d\mu_i\right)u_i(x).$$
\end{itemize}
\end{prop}

\begin{proof}

Let $\varphi:X\rightarrow \R$ be a continuous function, and let $x$ be in $X$. With $i(\omega,x)$ defined as in Proposition \ref{adhe}, we have for $\mathbb{P}$-almost every $\omega$ in $\Omega$:
$$\frac{1}{N}\sum_{n=0}^{N-1}\varphi(f_\omega^n(x))\xrightarrow[n\to+\infty]{} \int_X \varphi d\mu_{i(\omega,x)}.$$
Integrating in $\omega$, we deduce by dominated convergence that
\begin{equation}
\label{mkjh}
\frac{1}{N}\sum_{n=0}^{N-1}P^n\varphi(x)\xrightarrow[n\to+\infty]{} \sum_{i=1}^d u_i(x)\int_X \varphi d\mu_{i},\end{equation}
where $u_i(x)=\mathbb{P}(\omega\in \Omega \mid i(\omega,x)=i)$. Since the sequence $\left(\frac{1}{N}\sum_{n=0}^{N-1}P^n\varphi\right)_{n\in\N}$ is equicontinuous by Lemma \ref{equicontinuous}, the convergence (\ref{mkjh}) is in fact uniform in $x$.

The only non trivial property to prove on the functions $u_i$ is their continuity. For a given $i$ , we choose $\varphi$ continuous such that $\varphi=\delta_{i,j}$ on $K_j$, so that (\ref{mkjh}) becomes
$$u_i=\lim_{N\to +\infty}\frac{1}{N}\sum_{n=0}^{N-1}P^n\varphi$$
where the limit is uniform. The continuity of $u_i$ follows. 
\end{proof}

We will strenghten the result in the case of aperiodic systems. Let us recall the definition of aperiodicity given in Section \ref{results} in the case of the circle:

\begin{Def}\label{aper}
	The RDS $(G,\nu)$ on $X$ (resp. the random walk $\omega\mapsto (f_\omega^n)_{n\in\N}$) is said to be aperiodic if there does not exist a finite number $p\geq 2$ of pairwise disjoints closed subsets $F_1,\ldots,F_p$ of $X$ such that for $\nu$-almost every homeomorphism $g$, $g(F_i)\subset F_{i+1}$ for $i=1,\ldots,p-1$ and $g(F_p)\subset F_1$.
\end{Def}
\begin{rem}
	As already noticed in the particular case of the circle in Section \ref{results}, if a random walk $\omega\mapsto (f_\omega^n)_{n\in\N}$ acts minimally on $X$ and if $X$ is connected, then it is automatically aperiodic.
\end{rem}
We can state our result, which studies the convergence of th sequence $(P^n)_{n\in\N}$:
\begin{prop}\label{transfert}
We assume that the RDS $(G,\nu)$ satisfies Assumption \ref{contracting}, and we assume also that it is aperiodic. Then, keeping the notations of Proposition \ref{Furstenberg}, we have actually
$$ P^n\varphi\xrightarrow[n\to +\infty]{\|\cdot\|_\infty} \psi=\sum_{i=1}^d \left(\int_X\varphi d\mu_i\right)u_i$$ 
\end{prop}
The aperiodicity of the system is used to obtain the following fact, whose proof is postpone:
\begin{lem}\label{indemin}
If $\omega\mapsto (f_\omega^n)_{n\geq 0}$ acts minimally on $X$ and is aperiodic, then for any positive integer $p$, $\omega\mapsto (f_\omega^{pn})_{n\geq 0}$ also acts minimally on $X$.
\end{lem}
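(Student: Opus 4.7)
The plan is to study how the one-step transfer operator $P$ of $\nu$ acts on the finite-dimensional space of harmonic functions for the $p$-step walk. Let $H$ be the closed sub-semigroup generated by $\mathrm{supp}(\nu^{*p})$. The $H$-walk $(f_\omega^{pn})_n$ is a subsequence of the $G$-walk and therefore inherits Assumption~\ref{contracting}; hence Proposition~\ref{finite} produces finitely many pairwise disjoint minimal $H$-invariant compacts $F_1,\ldots,F_d\subset X$ with ergodic stationary measures $\mu_1,\ldots,\mu_d$, and Proposition~\ref{Furstenberg} applied to $H$ provides a canonical basis $u_1,\ldots,u_d$ of the $H$-harmonic continuous functions, with $u_i=\delta_{ij}$ on $F_j$, $u_i\in[0,1]$ and $\sum_i u_i\equiv 1$. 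The goal is to prove $d=1$ and $F_1=X$.

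Since $P$ commutes with $P^p$ (the $H$-transfer operator), it preserves the $H$-harmonic space, so we may write $Pu_i=\sum_j a_{ij}u_j$. Evaluating at a point of $F_j$ gives $a_{ij}=(Pu_i)|_{F_j}\in[0,1]$; applying $P$ to $\sum_i u_i\equiv 1$ yields $\sum_i a_{ij}=1$ for each $j$; and $P^pu_i=u_i$ gives $A^p=I$. Consequently $A^T$ is an invertible row-stochastic matrix whose inverse $(A^T)^{p-1}$ is again row-stochastic, and a classical argument—an affine bijection of the simplex permutes its vertices—forces $A$ to be a permutation matrix. Thus $Pu_i=u_{\sigma(i)}$ for some permutation $\sigma$ of $\{1,\ldots,d\}$.

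Introduce the closed sets $W_j:=\{x\in X:u_j(x)=1\}$. They contain $F_j$ respectively, and are pairwise disjoint since the $u_k$'s take values in $[0,1]$ and sum to $1$. The key consequence is that for every $i$, for $\nu$-almost every $g$, $g(W_{\sigma(i)})\subset W_i$: for $x\in W_{\sigma(i)}$ the equation $\int u_i(g(x))\,d\nu(g)=(Pu_i)(x)=u_{\sigma(i)}(x)=1$ together with $u_i\leq 1$ forces $u_i(g(x))=1$ for $\nu$-almost every $g$. Equivalently, $g(W_j)\subset W_{\sigma^{-1}(j)}$ for $\nu$-almost every $g$ and every $j$.

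To finish, assume for contradiction $d\geq 2$. If $\sigma$ admits a cycle of length $m\geq 2$, say $j_1\mapsto j_2\mapsto\cdots\mapsto j_m\mapsto j_1$ under $\sigma^{-1}$, then $W_{j_1},\ldots,W_{j_m}$ realize a decomposition of the kind forbidden by Definition~\ref{indeco}, contradicting indecomposability. If instead $\sigma=\mathrm{id}$, each $u_i$ is a $G$-harmonic continuous function; but Proposition~\ref{Furstenberg} applied to the minimal $G$-action forces this space to be one-dimensional (spanned by constants), incompatible with $u_i=\delta_{ij}$ on $F_j$ when $d\geq 2$. Hence $d=1$, so $\mu_1$ is the only $H$-stationary measure; since $P^*\mu_1$ is still $H$-stationary we get $P^*\mu_1=\mu_1$, i.e.\ $\mu_1$ is $G$-stationary, and by uniqueness of the $G$-ergodic stationary measure (of support $X$, by $G$-minimality and Proposition~\ref{finite}) we conclude $F_1=\mathrm{supp}(\mu_1)=X$. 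The main obstacle is the second paragraph: recognizing that $P$ restricted to the $H$-harmonic space is governed by a stochastic matrix with stochastic inverse, which is what produces the genuine permutation structure of the minimal sets.
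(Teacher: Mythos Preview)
Your proof is correct but follows a substantially different route from the paper's. The paper argues directly at the level of closed sets: it introduces the operator $\Theta(F)=\overline{\bigcup_{f\in\mathrm{supp}(\nu)}f(F)}$, picks (via Zorn) a minimal element $F$ among the non-empty closed sets with $\Theta^p(F)\subset F$, shows that the finite orbit $F,\Theta(F),\ldots,\Theta^{p-1}(F)$ consists of minimal elements that are pairwise disjoint or equal, and then uses indecomposability to force the period of this orbit to be $1$; hence $\Theta(F)\subset F$, and $F=X$ by $G$-minimality. This argument is purely topological and, notably, makes no use of Assumption~\ref{contracting}.

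Your approach instead leans on the structure theory of the section (Propositions~\ref{finite} and~\ref{Furstenberg}) applied to the $p$-step walk, reducing the question to the action of $P$ on the finite-dimensional space of $P^p$-harmonic continuous functions. The observation that this action is by a genuine permutation --- because the associated matrix and its inverse are both stochastic, hence the matrix is a permutation matrix --- is the crux, and the resulting cyclic structure on the level sets $W_j$ plugs directly into Definition~\ref{indeco}. The cost is that you invoke the full local-contraction machinery (so your argument does not prove the lemma in the generality the paper's proof actually achieves); the gain is a conceptual link between minimality of the subsampled walk and the harmonic space of the original one, which the paper's bare-hands argument does not exhibit. One small remark: in your final paragraph the dichotomy ``$\sigma$ has a cycle of length $\ge 2$'' versus ``$\sigma=\mathrm{id}$'' is indeed exhaustive, and once $d=1$ the conclusion $F_1=X$ follows more simply by noting that $\mathrm{supp}(\mu_1)$ is a non-empty closed $G$-invariant set (since $\mu_1$ is $G$-stationary), hence equals $X$ by $G$-minimality.
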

\begin{proof}[Proof of Proposition \ref{transfert}]
Let $\varphi:X\rightarrow \R$ be a continuous mapping. Thanks to Lemma \ref{equicontinuous}, the only thing we need to prove is that $(P^n\varphi)_{n\in\N}$ has only one cluster value in $C(X,\R)$, namely $\sum_i \left(\int \varphi d\mu_i\right)u_i$. Thus, let $\psi=\lim_{k\to +\infty} P^{n_k}\varphi$ be a cluster value of $(P^n\varphi)_n$.\\

Firstly, up to to extracting the candidate limit $\sum_i \left(\int \varphi d\mu_i\right)u_i$ to $\varphi$, we can assume that $\int_{S^1}\varphi d\mu_i=0$ for $i=1,\ldots d$, so that we want to prove that $\psi=0$.

Secondly, we can reduce the problem to the case where $\varphi=\psi$: indeed, up to extracting a subsequence, we can assume that $m_k=n_{k+1}-n_k$ tends to $+\infty$ when $k$ tends to $+\infty$. Using that $P$ is contracting for $\|\cdot\|_\infty$, we have
\begin{equation}
\label{constant2}
\|P^{m_k}\psi-\psi\|_\infty\leq \|P^{m_k}(\psi-P^{n_{k}}\varphi)\|_\infty
+\|P^{n_{k+1}}\varphi-\psi\|_\infty\xrightarrow[k\to+\infty]{}0 \end{equation}\\

Thus, from now on we assume that:
\begin{itemize}
\item $\displaystyle \int_{S^1}\varphi d\mu_i=0 \mbox{ for } i=1,\ldots d,$
\item $\displaystyle P^{n_k}\varphi\xrightarrow[k\to +\infty]{\|\cdot\|_\infty} \varphi,$
\end{itemize}
and we want to prove that $\varphi=0$.
We begin by treating the restriction of the problem to a minimal subset $F_i=\mathrm{supp}(\mu_i)$. We will use the following remark:
\begin{lem}
For any continuous $\varphi:X\rightarrow \R$ and any positive integer $k$, we have $\|P^k\varphi\|_{L^2(\mu_i)}\leq \|\varphi\|_{L^2(\mu_i)}$, with equality if and only if for $\mathbb{P}$-almost every $\omega$, $\omega'$, $\varphi\circ f_\omega^k=\varphi\circ f_{\omega'}^k$ on $F_i$.
\end{lem}
\begin{proof}The inequality is just a consequence of the Jensen inequality $P^k(\varphi)^2\leq P^k(\varphi^2)$ and of the $P^k$-invariance of $\mu$, and in the equality case of the Jensen inequality,we have that for almost every $\omega$, $\omega'$, $\varphi\circ f_\omega^k=\varphi\circ f_{\omega'}^k$ $\mu_i$-almost everywhere, hence on $F_i$ by continuity.\end{proof}

 By the lemma, the sequence $(\|P^n\varphi\|_{L^2(\mu_i)})$ is non increasing. For any integer $p$, writing that
$$\|P^{n_k}\varphi\|_{L^2(\mu_i)}\leq \|P^{n_k+p}\varphi\|_{L^2(\mu_i)}\leq \|P^{n_{k+p}}\varphi\|_{L^2(\mu_i)},$$
and passing to the limit, we obtain that $\|P^p\varphi\|_{L^2(\mu_i)}=\|\varphi\|_{L^2(\mu_i)}$,
and hence by the lemma, for $\mathbb{P}$-almost every $\omega$, $\varphi\circ f_\omega^p=P^p\varphi$ on $F_i$.
 
As a consequence, we obtain that for $\mathbb{P}$-almost every $\omega$ in $\Omega$
$$\varphi\circ f_\omega^{n_k}=P^{n_k}\varphi\xrightarrow[k\to +\infty]{\|\cdot\|_\infty} \varphi \text{ on } F_i$$
In particular, if $B$ is a contractible ball of $F_i$, then $\varphi$ is constant on $B$. By compactness, $\varphi$ only takes a finite number of values on $F_i$. We deduce that fixing an integer $p=n_k$ with $k$ large enough, we have $\varphi\circ f_\omega^p=\varphi$ on $F_i$ for $\mathbb{P}$-almost every $\omega$. Hence $\varphi$ is constant on $F_i$ by Lemma \ref{indemin}, and this constant is necessarily $\int \varphi d\mu_i=0$.

\medskip

We now go back to the whole space: we know that $\varphi$ is identically zero on each $F_i$. And for any $x$ in $X$, for almost every $\omega$, all the cluster values of $(f_\omega^n(x))$ belong to a minimal set $F_i$ (Proposition \ref{adhe}), hence $\varphi(f_\omega^n(x))\to 0$, hence by integration over $\omega$, $P^n\varphi(x)\to 0$, and in particular, 
$$\varphi(x)=\lim_k P^{n_k}\varphi(x)=0.$$
Thus $\varphi$ is identically zero on $X$.
\end{proof}

\begin{proof}[Proof of Lemma \ref{indemin}]
If $F$ is a closed subset of $X$, let us set
$$\Theta(F)=\overline{\bigcup_{f\in \mathrm{supp}(\nu)} f(F)}.$$
We want to prove that if $F$ is a non empty closed subset such that $\Theta^p(F)\subset F$ then $F=X$. Set
$$\mathcal{F}=\{F\subset X \mbox{ closed },F\not=\emptyset,  \Theta^p(F)\subset F\},$$
and let $F$ be an element of $\mathcal{F}$ which is minimal with respect to the inclusion. Then:
\begin{itemize}
\item for any integer $k$, $\Theta^k(F)\in \mathcal{F}$ (obvious);
\item $\Theta^p(F)=F$ by minimality of $F$, since  $\Theta^p(F)\in\mathcal{F}$ and $\Theta^p(F)\subset F$;
\item for any integer $k$, $\Theta^k(F)$ is minimal with respect to the inclusion in $\mathcal{F}$: indeed, if $G\in \mathcal{F}$ and $G\subset \Theta^k(F)$ with $k<p$, then $\Theta^{p-k}(G)\in \mathcal{F}$ and  $\Theta^{p-k}(G)\subset F$, hence $\Theta^{p-k}(G)=F$ by minimality, and hence $\Theta^k(F)=\Theta^p(G)\subset G$.
\end{itemize}
We conclude that the sequence $(\Theta^k(F))_k$ is periodic (of period less than $p$), with elements that are pairwise disjoint or equal (by minimality). Let $p'$ the period of the sequence. Then the finite sequence  $F,\Theta(F)\ldots, \Theta^{p'-1}(F)$ is a sequence of pairwise disjoint closed sets such that any $f$ in $\mathrm{supp}(\nu)$ sends each set into the following, and the last one into the first. Because of the assumption of aperiodicity, $p'$ is necessarily equal to $1$. As a consequence, $\Theta(F)\subset F$, which means that $F$ is invariant by any $f$ in $\mathrm{supp}(\nu)$, and hence $F=X$ by minimality of the random walk.

\end{proof}

\subsection{Global contractions}

The following theorem shows that from the local phenomenon of contrations given by Assumption \ref{contracting}, we can obtain  a phenomenon of global contractions, in the sense that almost surely, the number of domains of attraction is finite: this result is close to a result of Y.Le Jan \cite{Lejan}.
\begin{prop}\label{LeJan}
We assume that the RDS satisfies assumption \ref{contracting}, and we assume moreover that $X$ is locally connected. Then there exists a positive integer $p$, such that, for $\mathbb{P}$-almost every $\omega$ in $\Omega$, there exists $p$  connected open sets $U_1(\omega),\ldots,U_p(\omega)$, pairwise disjoints, such that:
\begin{itemize}
\item the union $U(\omega)=U_1(\omega)\cup\cdots\cup U_p(\omega)$ is dense in $X$,
\item for every $i$ in $\{1,\ldots,p\}$, for every $x,y$ in $U_i(\omega)$,
$$d(f_\omega^n(x),f_\omega^n(y))\rightarrow 0.$$
\end{itemize}
\end{prop}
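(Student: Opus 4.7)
}

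The plan is to work with the canonical random open set
$$U(\omega)=\{x\in X\mid\text{some open neighbourhood }B\text{ of }x\text{ satisfies }\mathrm{diam}(f_\omega^n(B))\to 0\},$$
and to take $U_1(\omega),\ldots,U_p(\omega)$ to be its connected components. By construction $U(\omega)$ is open; using local connectedness of $X$, the components are themselves open. The bulk of the work is then to show that a.s.\ (i) $U(\omega)$ is dense, (ii) on each connected component the trajectories synchronize, and (iii) there is a deterministic finite number $p$ of components.

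First I would check that $\mathcal{E}=\bigsqcup_\omega\{\omega\}\times U(\omega)$ is backward-invariant (and in fact totally invariant, since every $f_0$ used is a homeomorphism of $X$, so $f_0(U(\omega))=U(T\omega)$). By Assumption \ref{contracting}, $\mathbb{P}\otimes\mu(\mathcal{E})=1$ for every stationary ergodic $\mu$, so Proposition \ref{randomset} upgrades this to $\mathbb{P}\otimes\mu(\mathcal{E})=1$ for \emph{every} probability measure $\mu$ on $X$. Applying this to $\mu=\delta_x$ for each point $x$ in a countable dense subset of $X$ (which is separable), a single full-measure set of $\omega$ works for all such $x$, so $U(\omega)$ is $\mathbb{P}$-a.s.\ dense in $X$.

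Next I would identify the connected components with synchronization classes. For $x\in U(\omega)$, the set $C_x=\{y\in U(\omega)\mid d(f_\omega^n(x),f_\omega^n(y))\to 0\}$ is open (it contains a contractible neighbourhood of $x$) and closed in $U(\omega)$ (by the triangle inequality: if $y_k\to y\in U(\omega)$ with $y_k\in C_x$, pick a contractible neighbourhood $B$ of $y$ and write $d(f_\omega^n(x),f_\omega^n(y))\le d(f_\omega^n(x),f_\omega^n(y_k))+\mathrm{diam}(f_\omega^n(B))$). Hence connected components of $U(\omega)$ are contained in synchronization classes, and the converse (chaining finitely many small contractible neighbourhoods along a path inside an open connected subset of a locally connected space) gives equality. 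In particular the second bullet of the statement holds on each component.

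Finally I would control the number of components. Total invariance $f_0(U(\omega))=U(T\omega)$ and the injectivity of $f_0$ mean $f_0$ permutes the components of $U(\omega)$ onto those of $U(T\omega)$, so the cardinal $N(\omega)\in\{1,2,\ldots,\infty\}$ of the set of components is $T$-invariant; by ergodicity of the Bernoulli shift on $(\Omega,\mathbb{P})$ it is a.s.\ a constant. To rule out $N=\infty$, apply Proposition \ref{randomcon} to the totally invariant set $\mathcal{E}$ for each ergodic stationary $\mu_j$: a.s.\ $U(\omega)$ has a constant finite number $p_j$ of connected components of positive $\mu_j$-measure. The main obstacle is to argue that \emph{every} component has positive $\mu_j$-measure for at least one $j$, which then yields $N\le p_1+\cdots+p_d<\infty$; for this, pick $x$ in a component $U_k(\omega)$ and invoke Proposition \ref{adhe} to see that the Cesàro averages $\frac{1}{N}\sum(f_\omega^n)_*(\lambda|_{U_k})$ converge to $\lambda(U_k)\,\mu_{i(\omega,x)}$ (using the contraction $\mathrm{diam}(f_\omega^n(U_k))\to 0$ to identify this limit), so that $U_k$ is seen by $\mu_{i(\omega,x)}$. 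Once $N$ is constant and finite, set $p=N$ and relabel the components as $U_1(\omega),\ldots,U_p(\omega)$.
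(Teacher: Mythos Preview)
Your skeleton is the paper's: define $U(\omega)$ as the set of points admitting a contracted neighbourhood, use Proposition \ref{randomset} for density, identify connected components with synchronization classes via local connectedness, and invoke Proposition \ref{randomcon} to bound the number of components of positive $\mu_j$-measure. The divergence---and the gap---is in your last paragraph, where you try to show that \emph{every} component of $U(\omega)$ has positive $\mu_j$-measure for some $j$.

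That argument does not go through. First, pairwise synchronization on $U_k(\omega)$ does \emph{not} give $\mathrm{diam}(f_\omega^n(U_k))\to 0$: think of $U_k=(0,1)\subset S^1$ with $f_\omega^n(t)=t^n$, where every pair in $(0,1)$ synchronizes but the image always has diameter $1$. Second, even granting that, the Ces\`aro limit of $(f_\omega^n)_*(\lambda|_{U_k})$ describes where the forward images of $U_k$ accumulate; it says nothing about $\mu_j(U_k(\omega))$, which is what you need. Third, there is a quantifier issue: Proposition \ref{adhe} gives, for each fixed $x$, a full-measure set of $\omega$; you cannot first fix $\omega$, then choose $x$ inside the $\omega$-dependent set $U_k(\omega)$, and still invoke it.

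The paper sidesteps this obstacle by reversing the logic. Rather than proving that every component carries some stationary mass, it defines $\tilde U(\omega)$ to be the union, over $j=1,\ldots,d$, of those connected components of $U(\omega)$ having positive $\mu_j$-measure; Proposition \ref{randomcon} makes this a finite union of $p=\sum_j p_j$ open connected sets. One then applies Proposition \ref{randomset} directly to $\tilde{\mathcal E}=\bigsqcup_\omega\{\omega\}\times\tilde U(\omega)$ (which still has $\mathbb P\otimes\mu_j$-measure $1$ since throwing away $\mu_j$-null components does not change $\mu_j(U(\omega))=1$), and concludes that $\tilde U(\omega)$ is a.s.\ dense. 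But a dense open set cannot miss a nonempty open component of $U(\omega)$, so $U(\omega)=\tilde U(\omega)$ and the finiteness of $p$ follows for free. Replacing your final paragraph by this argument closes the gap.
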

\begin{proof}
Let us consider the set 
$$\mathcal{E}=\{(\omega,x)\in \Omega\times S^1|(f_\omega^n)\mbox{ contracts a neighbourhood of } x\}=\bigcup_{\omega\in\Omega}\{\omega\}\times U(\omega).$$
 By Proposition \ref{finite}, there is a finite number of stationary probability measures $\mu_1,\ldots,\mu_d$. For each $i$ in $\{1,\ldots,d\}$, let $U_i(\omega)$ be the union of the connected components of $U(\omega)$ which have a positive $\mu_i$-measure. For $\mathbb{P}$-almost every $\omega$, the set $U_i(\omega)$ is an open subset with $\mu_i$-measure $1$, and has by Proposition \ref{randomcon}  a finite constant number $p_i$ of connected components. We write  $\tilde{U}(\omega)=U_1(\omega)\cup\cdots \cup U_d(\omega)$. As a consequence of Corollary \ref{randomset} applied to $\tilde{\mathcal{E}}=\bigcup_{\omega\in\Omega}\{\omega\}\times \tilde{U}(\omega)$, we know that  $\mathbb{P}\otimes \mu(\tilde{\mathcal{E}})=1$ for every probability measure $\mu$, and hence that $\tilde{U}(\omega)$ is dense for $\mathbb{P}$-almost every $\omega$. Thus, for $\mathbb{P}$-almost every $\omega$, $\tilde{U}(\omega)$ is a dense open subset of $X$ with a finite number $p=\sum_i p_i$ of connected components (and hence in fact, $U(\omega)=\tilde{U}(\omega)$). The result follows.
\end{proof}
We conclude with a criterion ensuring the synchronization of the RDS.
\begin{prop}
\label{globally contracting}
If the RDS satisfies Assumption \ref{contracting}, then the following assertions are equivalent:
\begin{enumerate}
\item  the random walk $\omega\mapsto (f_\omega^n)$ is synchronizing, i.e. for every $x,y$ in $X$, for $\mathbb{P}$-almost every $\omega$ in $\Omega$, $d(f_\omega^n(x),f_\omega^n(y))\xrightarrow[n\to+\infty]{}0$
\item  the random walk  $\omega\mapsto (f_\omega^n,f_\omega^n)$ admits a unique stationary probability measure on $X\times X$;
\item  The action of $G$ on $X$ is proximal, i.e. for every $x,y$ in $X$, there exists a sequence $(g_n)_n$ of elements of $G$ such that $d(g_n(x),g_n(y))\xrightarrow[n\to+\infty]{}0$.
\end{enumerate}
\end{prop}
\begin{proof} 
Let us notice that the random walk $\omega\mapsto (f_\omega^n,f_\omega^n)$ on $X\times X$ also satisfies the property of local contractions, so that the previous propositions of the section apply to it. We will denote by $\tilde{G}$ the semigroup associated to $\omega\mapsto (f_\omega^n,f_\omega^n)$, and by $D$ the diagonal of $X\times X$.

\medskip

$1 \Rightarrow 3$ is trivial.

\medskip

$3\Rightarrow 2$: By Proposition \ref{finite}, if there are two distinct ergodic stationary probability measures, then their respective topological supports $F_1$ and $F_2$ are two disjoint closed non empty subsets  of $X\times X$ invariant by $\tilde{G}$. Let $(x,y)$ be any point of $F_1$. By assumption, one can find a sequence of elements $g_n$ in $G$ such that the distance between $g_n(x)$ and $g_n(y)$ tends to $0$. Since $(g_n(x),g_n(y))\in F_1$, taking a cluster value of the sequence we deduce that $F_1$ intersects $D$ at some point $(z_1,z_1)$. In the same way, $F_2$ intersects $D$ at some point $(z_2,z_2)$. Choosing then a sequence $(h_n)$ in $G$ such that $d(h_n(z_1),h_n(z_2))\rightarrow 0$, any cluster value of $(h_n(z_1),h_n(z_1))$ is also a cluster value of $(h_n(z_2),h_n(z_2))$ and hence belongs to $F_1\cap F_2$, which is absurd.

\medskip

$2\Rightarrow 1$: By Proposition \ref{finite}, there is a unique minimal non empty closed subset $F$ invariant by $\tilde{G}$. Since $D$ is $\tilde{G}$-invariant, $F\subset D$. By Proposition \ref{adhe}, for every $(x,y)$ in $X\times X$, for $\mathbb{P}$-almost every $\omega$ in $\Omega$, the set of cluster values of $((f_\omega^n(x),f_\omega^n(y))_{n\in\N}$ is exactly $F$. In particular, it is included in $D$, hence $d(f_\omega^n(x),f_\omega^n(y))\xrightarrow[n\to+\infty]{}0$.
\end{proof}

\section{Proof of the main results}\label{sec-proofs}
We are going to combine Theorem \ref{main} proved in Section \ref{sec-inv} and the results of Section \ref{sec-consequences} to deduce Theorem \ref{distribution}, \ref{law}, \ref{Lejan-Antonov}, \ref{synchronizing} and their corollaries.

\subsection{Behaviour of random walks on $\mathrm{Homeo}(S^1)$}
\begin{proof}[Proof of Theorem \ref{distribution}]~

If we are in the first case of Corollary \ref{alternative}, then the result is a direct consequence of Proposition \ref{finite} and \ref{adhe}. If not, then we are in the second  case since $G$ has no finite orbit. That means that $G$ is semiconjugated to a minimal semigroup of isometries, and it is classical in this case that the stationary probability measure is unique: assuming up to the semiconjugation that $G$ is a semigroup of isometries acting minimally, if $\mu_1$ and $\mu_2$ are two ergodic stationary probabilities, one can find Birkhoff's points of $\mu_1$ and $\mu_2$ arbitrarily close, and then the trajectories of these points remain close, so that $\mu_1$ and $\mu_2$ are themselves arbitrarily close, hence equal. Thus the statinary probability measure $\mu$ is unique, and the convergence of $\frac{1}{N}\sum_{n=0}^{N-1}\delta_{f_{\omega}^n(x)}$ toward $\mu$ is for exemple a consequence of Proposition \ref{cluster}.
\end{proof}

\begin{proof}[Proof of Theorem \ref{law}]~

With the notations of the statement, the distribution $\mu_n^x$ is given by $\int \varphi d\mu_n^x=P^n\varphi(x)$ where $P$ is the transfer operator of the random walk, so that Proposition \ref{transfert} implies that $(\mu_n^x)_n$ converges in law, uniformly in $x$, to the stationary probability measure $\mu^x=\sum_{i=1}^du_i(x)\mu_i$ (keeping the notations of Proposition \ref{transfert}).
\end{proof}

\begin{proof}[Proof of Theorem \ref{Lejan-Antonov}]~

As a consequence of Proposition \ref{LeJan}, for $\mathbb{P}$-almost $\omega$ in $\Omega$, the set $U(\omega)$ of the points having a neighbourhood contracted by $(f_\omega^n)_n$ is dense and has a finite constant number $d$ of connected components, so that $S^1-U(\omega)$ is finite of cardinal $d$. To obtain the exponential contractions, it is enough to copy the proof of Proposition \ref{LeJan} replacing $U(\omega)$ by the set $U'(\omega)$  of the points having a neighbourhood contracted exponentially fast by $(f_\omega^n)_n$.
\end{proof}

\subsection{Synchronization}
\begin{proof}[Proof of Theorem \ref{synchronizing}]~
The only non trivial implication is iii) $\Rightarrow$ i).	 
Let us assume that the action of $G$ is proximal, that is, for any points $x$, $y$ there exists a sequence $g_n$ of elements of $G$ such that $\mathrm{dist}(g_n(x),g_n(y))\to 0$.\\

Firstly, let us justify that we are in the first case of Corollary \ref{alternative}:
 
If $G$ is semi-conjugated to $\tilde{G}$, then $\tilde{G}$ satisfies the same property of synchronization as $G$, so that $\tilde{G}$ is  not a group of isometries, and so we are not in second case.

If $G$ leaves invariant a finite set having at least two points, then the action of $G$ on this finite set cannot be proximal, which contradicts the assumptions. And $G$ cannot fix a singleton by assumption. Hence we are not in third case.\\

So we are in the first case, that is, the random walk satisfies the property of contractions given by Theorem \ref{main}. For any $x$, $y$ in the circle, one can find a sequence $g_n$ in $G$ such that $(g_n(x))_n$ and $(g_n(y))_n$ tend to a same point $c$. By Theorem \ref{main}, one can find a neighborhood of $c$ having positive probability to be contracted, hence we deduce that there is a set of $\omega$ with positive probability such that $\mathrm{dist}(f_\omega^n(x),f_\omega^n(y))$ tends to $0$ exponentially fast as $n$ tends to $+\infty$.\\

Let $\mathcal{E}$ be the set of $(\omega,x,y)$ in $\Omega\times S^1\times S^1$ such that $\mathrm{dist}(f_\omega^n(x),f_\omega^n(y))$ tends to $0$ exponentially fast as $n$ tends to $+\infty$. We obtained that for any $x$, $y$ in $S^1$, $\mathbb{P}\otimes \delta_{(x,y)}(\mathcal{E})>0$, hence by Proposition \ref{randomset}, we have actually $\mathbb{P}\otimes \delta_{(x,y)}(\mathcal{E})=1$, which means that the random walk is exponentially synchronizing.

\end{proof}
\begin{rem}
In the previous proof we could use Proposition \ref{globally contracting} to deduce the property of synchronization. However, it does not give the exponential speed.
\end{rem}

%
%
%
%

\begin{proof}[Proof of Corollary \ref{robust}]
Let $K$ be the compact minimal invariant by $G$ (necessarily unique because of the synchronization property).

\begin{lem}
There exists $g$ in $G$ having a robust fixed point and such that $g|_K\not=Id_K$.\\
(We say that $g$ has a robust fixed point if every small $C^0$-perturbation of $g$ has a fixed point)
\end{lem}
\begin{proof}
Let $x$ be any point of $K$. By Theorems \ref{main} and \ref{distribution}, one can find $\omega\in \Omega$ and a neighbourhood $I_0$ of $x$ such that $\mathrm{diam}(f_\omega^n(I_0))\to 0$ as $n\to +\infty$ and $(f_\omega^n(x))_{n\in\N}$ is dense in $K$. Thus we can find some integer $n$ such that $\overline{f_\omega^n(I_0)}\subset I_0-\{x\}$. Then $g=f_\omega^n$ satisfy $\overline{g(I_0)}\subset I_0$ (which implies that $g$ has a robust fixed point ) and $g(x)\not=x$.
\end{proof}

Let $g$ be as in the lemma, and $I$ be an open interval intersecting $K$ such that $g$ has no fixed point on the closure of $I$. Let $x$ and $y$ be in $S^1$. For almost every $\omega$, the trajectories $(f_\omega^n(x))$ and $(f_\omega^n(y))$ are by assumption asymptotically identical, and are dense in $K$. We deduce that we can find $h$ in $G$ such that $h(x),h(y)\in I$. By compactness, one can find $h_1,\ldots,h_p$ in $G$ such that for any $x,y$ in $S^1$, $h_i(x),h_i(y)\in I$ for some $i$ in $\{1,\ldots,p\}$.

Now, let $\tilde{f}_1,\ldots,\tilde{f}_d$ be small $C^0$-perturbations of the generators $f_1,\ldots,f_d$, $\tilde{G}$ be the semigroup generated by these new generators, and $\tilde{g},\tilde{h}_1,\ldots,\tilde{h}_p \in \tilde{G}$ be corresponding perturbations of $g,h_1,\ldots,h_p$. If the perturbations are small enough, the properties 
\begin{itemize}
\item $\forall x\in I, \tilde{g}(x)\not= x$,
\item $\tilde{g}$ has a fixed point,
\item $\forall x,y\in S^1 \exists i\in \{1,\ldots,p\}|\tilde{h}_i(x),\tilde{h}_i(y)\in I$,
\end{itemize}
 are still satisfied. The two first properties imply that $(\tilde{g}^n)$ converges to a constant on $I$, and hence using the third one we deduce that for any $x,y$ in $S^1$, there exists $i$ such that $\mathrm{dist}(\tilde{g}^n\circ \tilde{h}_i(x),\tilde{g}^n\circ \tilde{h}_i(y))\to 0$ as $n\to +\infty$. Thus, we can use Theorem \ref{synchronizing} to conclude that every random walk which is non degenerated on $\tilde{G}$ is synchronizing.
\end{proof}

\subsection{Random dynamical systems on $[0,1]$}

\begin{proof}[Proof of Corollary \ref{example}]
	Indentifying $I=[0,1]$ with an arc of $S^1$, we can prolong arbitrarily any injective map of $I$ to a homeomorphism of $S^1$. Thus, the result is a consequence of Theorem \ref{synchronizing}, once we have proved that
	\begin{itemize}
		\item There is no point of $I$ fixed by every element of $G$;
		\item There exists a sequence $(g_n)$ is $G$ such that 
		$$\mathrm{diam}(g_n(I))\xrightarrow[n\to+\infty]{}0.$$
	\end{itemize}
	
	The first point is straightforward, since a point fixed by $G$ belongs to $\bigcap_{g\in G} g(I)=\emptyset$.\\
	
	Let us prove the second point. Let us denote, for $g$ in $G$, $[a(g),b(g)]=g([0,1])$, $a=\sup_{g\in G}a(g)$ and $b=\inf_{g\in G}b(g)$. If $a\leq b$, then $[a,b]\subset \bigcap_{g\in G}[a(g),b(g)]\subset \bigcap_{g\in G} g(I)$, which is a contradiction. Thus $a>b$, so that one can find $g$ and $h$ in $G$ such that $a(g)>b(h)$, which implies that $g(I)\cap h(I)=\emptyset$. Since $g^2=g\circ g$ is increasing and has no fixed point on $h(I)$, we deduce that the sequence $(g^{2n})_n$ converges on $h(I)$ to a constant. In consequence, the sequence $g_n=g^{2n}\circ h$ satisfies the second point. That concludes the proof.
\end{proof}
\begin{proof}[Proof of Corollary \ref{mainR}]~

Identifying the points $0$ and $1$ gives a circle so that we can apply results of Section \ref{sec-inv} on the random walk.

Let $x_0$ be any point of $(0,1)$. For $\omega$ in $\Omega$, let
$$\mu_{N,\omega}=\frac{1}{N}\sum_{n=0}^{N-1}\delta_{f_\omega^n(x_0)}$$
We want to prove that for almost every $\omega$ in $\Omega$, the sequence $(\mu_{N,\omega})_{N\in\N}$ has some weak adherence value which is not invariant by $G$, in order to use Proposition \ref{contractuel} and Corollary \ref{invarianceiid}. In this view, let us note that the probability measures invariant by $G$ are necessarily convex combinations of $\delta_0$ and $\delta_1$.\\

Let $\mu$ a stationary probability measure of $G$ on $(0,1)$, that we can suppose ergodic. Since $\mbox{supp}(\mu)$ is invariant by $G$, we deduce that the interval $[\inf(\mbox{supp}(\mu)),\mbox{supp}(\mu)]$ is also invariant by $G$, hence is equal to $[0,1]$ by assumption, so that $0$ and $1$ belong to $\mbox{supp}(\mu)$. In particular one can find a Birkhoff point $a$ of $\mu$ in $(0,x_0)$ and an other Birkhoff point $b$ in $(x_0,1)$.\\

Let $I$ a compact interval of $(0,1)$ such that $\mu(I)\geq \frac{3}{4}$. Then for almost every $\omega$ in $\Omega$, the sets $A_\omega=\{n\in\N |f_\omega^n(a)\not\in I\}$ and $B_\omega=\{n\in\N |f_\omega^n(b)\not\in I\}$ have density smaller than $\frac{1}{4}$. But obviously, since $a<x_0<b$ the set $C_\omega=\{n\in\N |f_\omega^n(x_0)\not\in I\}$ is contained in $A_\omega\cup B_\omega$, hence this last set has density smaller than $\frac{1}{2}$.\\

In consequence, for almost every $\omega$ in $\Omega$, an adherance value $\mu'$ of $(\mu_{N,\omega})_{N\in\N}$ satisfies $\mu'(I)\geq \frac{1}{2}$, hence $\mu'$ is not a convex combination of $\delta_0$ and $\delta_1$, hence is not invariant by $G$ and
$$\lambda_{con}(\omega,x_0)\leq \lambda_{con}(\mathbb{P}\times\mu')<0$$
by Proposition \ref{contractuel} and Corollary \ref{invarianceiid}.

\end{proof}

\begin{proof}[Proof of Corollary\ref{torchder}]~

 By Corollary \ref{mainR}, every point $x$ has a contractible neighbourhood $I$ (in the meaning given in Definition \ref{contracdef}), and then there exists exactly one stationary ergodic probability measure $\mu_x$ such that $\mu_x(I)>0$: at most one because of Lemma \ref{contractible}, and at least one because $\mu(I)$ is positive.
 
  Then, $x\mapsto \mu_x$ is constant on contractible intervals, hence is locally constant, hence constant. This constant is the only ergodic stationary probability measure supported on $(0,1)$, and necassarily is $\mu$.
\end{proof}

\bibliography{Bibliography}
\bibliographystyle{plain}

\end{document}